\newtheorem{theorem}{Theorem}[section]
\theoremstyle{corollary}
\newtheorem{corollary}[theorem]{Corollary}
\theoremstyle{proposition}
\newtheorem{proposition}[theorem]{Proposition}
\theoremstyle{definition}
\newtheorem{definition}[theorem]{Definition}
\theoremstyle{remark}
\numberwithin{equation}{section}
\newcommand{\comment}[1]{}
\begin{document}

\title [p-adic rough multilinear  Hausdorff operators and commutator]{Some estimates for p-adic rough multilinear  Hausdorff operators and commutators on weighted Morrey-Herz type spaces}

\thanks{This paper is funded by Vietnam National Foundation for Science and Technology Development (NAFOSTED)}

\author{Nguyen Minh Chuong}

\address{Institute of mathematics, Vietnamese  Academy of Science and Technology,  Hanoi, Vietnam.}
\email{nmchuong@math.ac.vn}

\author{Dao Van Duong}
\address{School of Mathematics, Mientrung University of Civil Engineering, Phu Yen, Vietnam}
\email{daovanduong@muce.edu.vn}

\author{Kieu Huu Dung}
\address{School of Mathematics, University of Transport and Communications, Ha Noi, Vietnam}
\email{khdung@utc2.edu.vn}
\keywords{Rough multilinear  Hausdorff operator, commutator, central BMO space, Morrey-Herz space, $p$-adic analysis.}
\subjclass[2000]{42B25, 42B99, 26D15}
\begin{abstract}
The aim of this paper is to introduce and study the boundedness of a new class of $p$-adic rough multilinear Hausdorff operators on the product of Herz, central Morrey and Morrey-Herz spaces with power weights and  Muckenhoupt weights. We also establish the boundedness for the commutators of $p$-adic rough multilinear Hausdorff operators on the weighted spaces with symbols in central BMO space.
\end{abstract}

\maketitle
\section{Introduction}
The $p$-adic analysis in the past decades has received a lot of attention due to its important applications in mathematical physics as well as its necessity in sciences and technologies (see e.g. \cite{Avetisov1, Avetisov2, Beloshapka, Chuong1, Chuong2, Dragovich, Khrennikov1, Kozyrev, Varadarajan, Vladimirov, Vladimirov1, Vladimirov2} and references therein). All these developments have been motivated for two physical ideas. The first is the conjecture in particle physics that at very small, so-called Planck distances physical space-time has a complicated non-Archimedean structure and that $p$-adic numbers correctly reflect this structure. As a consequence of this idea have emerged the $p$-adic quantum mechanics and $p$-adic quantum field theory. The second idea comes from statistical physics, in particular in connection with models describing relaxation in glasses, macromolecules and proteins. 
\vskip 5pt
It is also known that the theory of functions from $\mathbb Q_p$ into $\mathbb C$ play an important role in $p$-adic quantum mechanics, the theory of $p$-adic probability in which real-valued random variables have to be considered to solve covariance problems. In recent years, there is an increasing interest in the study of harmonic analysis and wavelet analysis over the $p$-adic fields (see e.g. \cite{Albeverio, Chuong1, Chuong2, Chuong5, Haran1, Haran2, Khrennikov2, Kozyrev}). One of the important operators in harmonic analysis is the Hausdorff operator, and it is used to solve certain classical problems in analysis.  In 2010, Volosivets \cite{Volosivets1} introduced the Hausdorff operator on the $p$-adic field as follows
\begin{equation}\label{HausdorfVolosivets}
{\mathcal H}_{\varphi,A}(f)(x)=\int_{\mathbb Q_p^n} \varphi(t)f(A(t)x)dt,\;\;x\in\mathbb Q^n_p,
\end{equation}
where $\varphi(t)$ is a locally intefrable function on $\mathbb Q_p^n$ and $A(t)$ is an $n\times n$ invertible matrix for almost everywhere $t$ in the support of $\varphi$.  It should be pointed out that if we take  $\varphi(t)=\psi(t_1)\chi_{{\mathbb Z^*_p}^n}(t)$ and $A(t)= t_1.I_n$ ($I_n$ is an identity matrix), for $t=(t_1,t_2,...,t_n)$, where $\psi:\mathbb Q_p\to \mathbb C$ is a measurable function,  ${\mathcal H}_{\varphi,A}$ then reduces to the $p$-adic weighted Hardy-Littlewood average operator due to Rim and Lee \cite{Rim}.
For all we know, the theory of  the Hardy operators, the Hausdorff operators over the $p$-adic numbers field has been significantly developed into different contexts, and they are actually useful for $p$-adic analysis (see e.g. \cite{Chuong3, Chuongduong, Hung, Volosivets2, Volosivets3, WMF2013}). In  2012, Chen, Fan and Li \cite{CFL2012} introduced another version of Hausdorff operators, so-called the rough Hausdorff operators, as follow  
\begin{equation}\label{RoughHausdorf1}
{{\mathcal H}}_{\Phi,\Omega}(f)(x)=\int_{\mathbb R^n}\dfrac{\Phi\left(x|y|^{-1}\right)}{|y|^n}\Omega\left(y|y|^{-1}\right)f(y)dy,\;\;x\in\mathbb R^n.
\end{equation}
Note that if $\Phi$ is a radial function, then by using the change of variable in polar coordinates, the operator ${{\mathcal H}}_{\Phi,\Omega}$ is rewritten under of  the form
\begin{equation}\label{RoughHausdorf}
{{\mathcal H}}_{\Phi,\Omega}(f)(x)=\int_0^{\infty}\int_{S^{n-1}} \dfrac{\Phi(t)}{t}\Omega(y')f(t^{-1}|x|y')d\sigma(y')dt.
\end{equation}
Very recently, Volosivets \cite{ Volosivets4} introduced and investigated the rough Hausdorff operators on the Lorentz space on $p$-adic fields which are defined by
\begin{equation}\label{p-adicRoughHausdorf}
{{\mathcal H}^p}_{\Phi,\Omega}(f)(x)=\int_{\mathbb Q^n_p} \dfrac{\Phi(x|y|_p)}{|y|^n_p}\Omega(y|y|_p)f(y)dy.
\end{equation}
\vskip 5pt
Motivated by above results, we shall introduce and investigate in this paper a new class of rough multilinear Hausdorff operators defined as follows.
\begin{definition}
Let $\Phi:\mathbb Q_p\longrightarrow [0, \infty) $ and   $\Omega:S_0\longrightarrow\mathbb C $ be measurable functions such that $\Omega(y)\neq 0$ for almost everywhere $y$ in $S_0$. Let $f_1, f_2, ..., f_m$ be measurable complex-valued functions on $\mathbb Q^n_p$. The $p$-adic rough multilinear Hausdorff operator is defined by

\begin{equation}\label{RHpadic}
{\mathcal H}^p_{\Phi,\Omega}(\vec f)(x)=\sum_{\gamma\in\mathbb Z}\int_{S_0} \dfrac{\Phi(p^{\gamma})}{p^{\gamma}}\Omega(y)\prod\limits_{i=1}^{m} f_i(p^{\gamma}|x|_p^{-1}y)dy,\;\;x\in\mathbb Q^n_p,
\end{equation}
for $\vec f=\big(f_1,..., f_m\big)$.
\end{definition}
 It is clear that if $m =1$, $\Omega \equiv 1$ and $\Phi(t)= |t|_p^{n-1}\chi_{B(0,1)}$, then ${\mathcal H}^p_{\Phi,\Omega}$ is  reduced to the $p$-adic Hardy operator $\mathcal H^p$ (see, for example, \cite{FWL2013}, \cite{WMF2013})
defined by
\begin{align}
\mathcal H^p(f)(x)=\dfrac{1}{|x|_p^n}\int_{B(0,|x|_p)}f(t)dt, \,x\in\mathbb Q^n_p\setminus\lbrace 0\rbrace,
\end{align}
where $B(0,|x|_p)$ is a ball in $\mathbb Q^n_p$ with center at $0$ and radius $|x|_p$.
 Let $b$ be a measurable function. We denote by $\mathcal{M}_b$ the multiplication operator defined  by $\mathcal{M}_bf (x)=b(x) f (x)$ for any measurable function $f$. If $\mathcal{H}$ is a linear operator on some measurable function space, the commutator of Coifman-Rochberg-Weiss type formed by $\mathcal{M}_b$  and $\mathcal{H}$ is defined by $[\mathcal{M}_b, \mathcal{H}]f (x)=(\mathcal{M}_b\mathcal{H}-\mathcal{H}\mathcal{M}_b) f (x)$. Next, let us give the definition for the commutators of Coifman-Rochberg-Weiss type of $p$-adic rough multilinear Hausdorff operator.
\begin{definition}
Let $\Phi, \Omega$ be as above. The Coifman-Rochberg-Weiss type commutator of $p$-adic rough multilinear Hausdorff operator is defined by

\begin{equation}\label{commuatatorRHpadic}
{\mathcal H}^p_{\Phi,\Omega,\vec b}(\vec f)(x)=\sum_{\gamma\in\mathbb Z}\int_{S_0} \dfrac{\Phi(p^{\gamma})}{p^{\gamma}}\Omega(y)\prod\limits_{i=1}^{m}\Big(b_i(x)- b_i(p^{\gamma}|x|_p^{-1}y)\Big)\prod\limits_{i=1}^{m} f_i(p^{\gamma}|x|_p^{-1}y)dy,
\end{equation}
where $x\in\mathbb Q^n_p $, $\vec b=\big(b_1,..., b_m\big)$ and $b_i$ are locally integrable functions on $\mathbb Q_p^n$ for all $i=1,...,m$.
\end{definition}

The main purpose of this paper is to extend and study the new $p$-adic rough multilinear Hausdorff operators on the $p$-adic numbers field. We then obtain the necessary and sufficient conditions for  the boundedness of ${\mathcal {H}}^p_{\Phi,\Omega}(\vec f)$ on the  product of  Herz, central Morrey and Morrey-Herz spaces on $p$-adic field. In each case, we estimate the corresponding operator norms. Moreover, the boundedness of ${\mathcal {H}}^p_{\Phi,\Omega,\vec b}(\vec f)$ on the such spaces with symbols in central BMO space is also established. It should be pointed out that all our results  are new even in the case of linear $p$-adic rough Hausdorff operators.
\vskip 5pt
Our paper is organized as follows. In Section \ref{section2}, we present some notations and preliminaries about $p$-adic analysis as well as give some definitions of the Herz, central Morrey and Morrey-Herz spaces associated with power weights and  Muckenhoupt weights. Our main theorems are given and proved in Section \ref{section3} and Section \ref{section4}.

\section{Some notations and definitions}\label{section2}
For a prime number $p$, let $\mathbb Q_p$ be the field of $p$-adic numbers. This field is the completion of the field of rational numbers $\mathbb Q$ with respect to the non-Archimedean $p$-adic norm $|\cdot|_p$. This norm is defined as follows: if $x=0$, $|0|_p=0$; if $x\not=0$ is an arbitrary rational number with the unique representation $x=p^\alpha\frac{m}{n}$, where $m, n$ are not divisible by $p$, $\alpha=\alpha(x)\in\Bbb Z$, then $|x|_p=p^{-\alpha}$. This norm satisfies the following properties:

\rm{(i)} $|x|_p\geq 0,\;\; \forall x\in\mathbb Q_p$, $|x|_p=0\Leftrightarrow x=0;$

\rm{(ii)} $|xy|_p=|x|_p|y|_p,\;\; \forall x, y\in\Bbb Q_p;$

\rm{(iii)} $|x+y|_p\leq \max(|x|_p,|y|_p),\; \forall x, y\in\mathbb Q_p$, and
when $|x|_p\not=|y|_p$, we have $|x+y|_p=\max(|x|_p,|y|_p).$\\
It is also well-known that any non-zero $p$-adic number $x\in\mathbb Q_p$ can be uniquely represented in the canonical series
\begin {equation}\label{eq1.1}
x=p^\alpha (x_0+x_1p+x_2p^2+\cdot\cdot\cdot),
\end{equation}
where $\alpha=\alpha(x)\in\Bbb Z$,\; $x_k=0,1,...,p-1,\; x_0\not= 0,\; k=0,1,...$.
This series converges in the $p$-adic norm since $|x_kp^k|_p\leq p^{-k}$.

The space $\mathbb Q^n_p=\mathbb Q_p\times\cdot\cdot\cdot\times\mathbb Q_p$ consists of all points $x=(x_1,...,x_n)$, where $x_i\in\mathbb Q_p,\; i=1,...,n,\;\; n\geq1$. The $p$-adic norm of $\mathbb Q^n_p$ is defined by
\begin {equation}\label{eq1.2}
|x|_p=\max_{1\leq j\leq n}|x_j|_p.
\end{equation}
Let 
\[B_\alpha(a)=\left\{x\in\mathbb Q^n_p : |x-a|_p\leq p^\alpha\right\}
\]
be a ball of radius $p^\alpha$ with center at $a\in\mathbb Q^n_p$. Similarly, denote by 
\[S_\alpha(a)=\left\{x\in\mathbb Q^n_p : |x-a|_p=p^\alpha\right\}\]
the sphere with center at $a\in\mathbb Q^n_p$ and radius $p^\alpha$. If $B_\alpha=B_\alpha(0), S_\alpha=S_\alpha(0)$, then for any $x_0\in\mathbb Q^n_p$ we have $x_0+B_\alpha=B_\alpha(x_0)$ and $x_0+S_\alpha=S_\alpha(x_0)$. Especially, we denote $\mathbb Z_p$ instead of $B_0$, $\mathbb Z^*_p=B_0\setminus\{0\}$ in $\mathbb Q_p$, $\mathbb Q^*_p=\mathbb Q_p\setminus\{0\}$ and $\chi_k$ be the characteristic function of the sphere $S_k$.

Since $\mathbb Q^n_p$ is a locally compact commutative group under addition, it follows from the standard theory that there exists a Haar measure $dx$ on $\mathbb Q^n_p$, which is unique up to positive constant multiple and is translation invariant. This measure is unique by normalizing $dx$ such that
\[
\int\limits\limits_{B_0}dx=|B_0|=1,
\]
where $|B|$ denotes the Haar measure of a measurable subset $B$ of $\mathbb Q^n_p$. By simple calculation, it is easy to obtain that $|B_\alpha(a)|=p^{n\alpha}, |S_\alpha(a)|=p^{n\alpha}(1-p^{-n})\simeq p^{n\alpha}$, for any $a\in\mathbb Q^n_p$.  For  $f\in L^1_{\text{loc}}(\mathbb Q^n_p)$, we have
$$\int_{\mathbb Q^n_p}f(x)dx=\lim_{\alpha\rightarrow +\infty}\int_{B_\alpha} f(x)dx=\lim_{\alpha\rightarrow +\infty}\sum_{-\infty<\gamma\leq\alpha}\int_{S_\gamma} f(x)dx.$$
In particular, if $f\in L^1(\mathbb Q^n_p)$, we can write 
$$\int_{\mathbb Q^n_p}f(x)dx=\sum_{\alpha=-\infty}^{+\infty}\int_{S_\alpha} f(x)dx,$$
and 
$$\int_{\mathbb Q^n_p}f(tx)dx=\frac{1}{|t|_p^n}\int_{\mathbb Q^n_p} f(x)dx,$$
where $t\in\mathbb Q_p\setminus\{0\}$. For a more complete introduction to the $p$-adic analysis, we refer the readers to \cite{Khrennikov1, Vladimirov2} and the references therein.

Let $\omega(x)$ be a non-negative measurable function in $\mathbb Q^n_p$. The weighted Lebesgue space $L^q_\omega(\mathbb Q^n_p) \left(0 < q < \infty\right)$ is defined to be the space of all measurable functions $f$ on $\mathbb Q^n_p$ such that
\[
\|f\|_{L^q_\omega(\mathbb Q^n_p)}=\Big(\int_{\mathbb Q^n_p}|f(x)|^q\omega(x)dx\Big)^{1/q}<\infty.
\]
The space $L^q_{\omega, \,\rm loc}(\mathbb Q^n_p)$ is defined as the set of all measurable functions $f$ on $\mathbb Q^n_ p$ satisfying $\int_{K}|f(x)|^q\omega(x)dx<\infty$ for any compact subset $K$ of $\mathbb Q^n_p$. The space $L^q_{\omega,\rm loc}(\mathbb Q^n_p\setminus\{0\})$ is also defined in a similar way as the space $L^q_{\omega,\rm loc}(\mathbb Q^n_p)$.
\vskip 5pt
Throught the whole paper, we denote by $C$ a positive geometric constant that is independent of the main parameters, but can change from line to line. Denote $\omega^{\lambda}(B)=\left(\int_B{\omega(x)}dx\right)^{\lambda}$, for $\lambda\in\mathbb R$ and $\omega(B):=\omega^1(B)$.  We also write $a \lesssim b$ to mean that there is a positive constant $C$, independent of the main parameters, such that $a\leq Cb$. The symbol $f\simeq g$ means that $f$ is equivalent to $g$ (i.e.~$C^{-1}f\leq g\leq Cf$). For any real number $\ell>1$, denote by $\ell'$ conjugate real number of $\ell$, i.e. $\frac{1}{\ell}+\frac{1}{\ell'}=1$.
\vskip 5pt
Next, let us give the definition of weighted $\lambda$-central Morrey spaces on $p$-adic field. 
\begin{definition} Let $\lambda\in\mathbb R$ and $ 1<q<\infty$.
The  weighted $\lambda$-central Morrey $p$-adic spaces ${\mathop{B}\limits^.}^{q,\lambda}_{\omega}(\mathbb Q^n_p)$ consists of all Haar measurable functions $f\in L^q_{\omega,\rm loc}(\mathbb Q_p^n)$  satisfying $\|f\|_{{\mathop{B}\limits^.}^{q,\lambda}_{\omega}(\mathbb Q^n_p)}<\infty$,
where
\begin{equation}
\|f\|_{{\mathop{B}\limits^.}^{q,\lambda}_{\omega}(\mathbb Q^n_p)}=\mathop{\rm sup}\limits_{\gamma\in \mathbb Z}\Big(\dfrac{1}{\omega(B_\gamma)^{1+\lambda q}}\int_{B_\gamma}|f(x)|^q\omega(x)dx\Big)^{1/q}.
\end{equation}
Remark that ${\mathop{B}\limits^.}^{q,\lambda}_{\omega}(\mathbb Q^n_p)$ is a Banach space and reduces to $\{0\}$ when $\lambda<-\frac{1}{q}$.
\end{definition}
We also present some definitions of the weighted  Herz $p$-adic space and Morrey-Herz $p$-adic space.
\begin{definition} Let $\beta\in\mathbb R, 0<q<\infty$ and $0<\ell <\infty$. The weighted Herz $p$-adic space  $K_{q,\omega}^{\beta,\ell}(\mathbb Q^n_p)$ is defined as the set of all Haar measurable functions $f\in L^q_{\omega,\rm loc}(\mathbb Q_p^n\setminus \{0\})$  such that $\|f\|_{K_{q,\omega}^{\beta,\ell}(\mathbb Q^n_p)}<\infty$,
where
\begin{equation}
\|f\|_{K_{q,\omega}^{\beta,\ell}(\mathbb Q^n_p)}=\Big( \sum\limits_{k=-\infty}^{\infty} p^{k\beta\ell}\|f\chi_k\|_{L^q_\omega(\mathbb Q^n_p)}^{\ell}\Big)^{1/\ell}.
\end{equation}
\end{definition}

\begin{definition} Let $\beta\in\mathbb R, 0<q<\infty$ and $0<\ell <\infty$. The weighted Herz $p$-adic space  ${\mathop{K}\limits^{.}}_{q,\omega}^{\beta,\ell}(\mathbb Q^n_p)$ is defined as the set of all Haar measurable functions $f\in L^q_{\omega,\rm loc}(\mathbb Q_p^n\setminus \{0\})$  such that $\|f\|_{{\mathop{K}\limits^{.}}_{q,\omega}^{\beta,\ell}(\mathbb Q^n_p)}<\infty$,
where
\begin{equation}
\|f\|_{{\mathop{K}\limits^{.}}_{q,\omega}^{\beta,\ell}(\mathbb Q^n_p)}=\Big( \sum\limits_{k=-\infty}^{\infty} \omega^{\beta\ell/n}(B_k)\|f\chi_k\|_{L^q_\omega(\mathbb Q^n_p)}^{\ell}\Big)^{1/\ell}.
\end{equation}
\end{definition}
\begin{definition} Let $\beta\in\mathbb R, 0<q<\infty$, $0<\ell <\infty$ and $\lambda$ be a non-negative real number. The weighted Morrey-Herz $p$-adic space is defined by
\[
MK_{\ell, q,\omega}^{\beta,\lambda}(\mathbb Q^n_p)=\left\{f\in L^q_{\omega,\rm loc}(\mathbb Q_p^n\setminus \{0\}):\|f\|_{MK_{\ell, q,\omega}^{\beta,\lambda}(\mathbb Q^n_p)}<\infty\right\},
\] 
where
\begin{equation}
\|f\|_{MK_{\ell, q,\omega}^{\beta,\lambda}(\mathbb Q^n_p)}=\mathop{\rm sup}\limits_{k_0\in\mathbb Z}p^{-k_0\lambda}\Big( \sum\limits_{k=-\infty}^{k_0} p^{k\beta\ell}\|f\chi_k\|_{L^q_\omega(\mathbb Q^n_p)}^{\ell}\Big)^{1/\ell}.
\end{equation}
\end{definition}
Let us recall to define the central BMO $p$-adic space.
\begin{definition}
Let $1\leq q<\infty$ and $\omega$ be a weight function. The central bounded mean oscillation space ${\mathop {CMO}\limits^{.}}^q_\omega(\mathbb Q^n_p)$ is defined as the set of all functions $f\in L^q_{\omega,\rm loc}(\mathbb Q^n_p)$ such that
\begin{equation}
\big\|f\big\|_{{\mathop {CMO}\limits^{.}}^q_\omega(\mathbb Q^n_p)}=\mathop {\rm sup}\limits_{\gamma\in \mathbb Z}\Big( \frac{1}{\omega(B_\gamma)}\int\limits_{B_\gamma}{|f(x)-f_{\omega, B_\gamma}|^q\omega(x)dx}\Big)^{\frac{1}{q}},
\end{equation}
where 
\[
f_{\omega, B_\gamma}=\frac{1}{\omega(B_\gamma)}\int\limits_{B_\gamma}{f(x)\omega(x)dx}.
\]
\end{definition} 
\vskip 5pt
The theory of $A_p$ weight was first introduced by Benjamin Muckenhoupt on the Euclidean spaces
in order to characterise the boundedness of Hardy-Littlewood maximal functions on the weighted $L^p$ spaces (see \cite{Muckenhoupt1972}). For $A_p$ weights on the $p$-adic fields,  more generally, on the local fields or homogeneous type spaces,  one can refer to \cite{chuonghung, HCE2012} for more details.
\begin{definition} Let $1 < \ell < \infty$. We say that a weight $\omega \in A_\ell(\mathbb Q^n_p)$ if there exists a constant $C$ such that for all balls $B$,
$$\Big(\dfrac{1}{|B|}\int_{B}\omega(x)dx\Big)\Big(\dfrac{1}{|B|}\int_{B}\omega(x)^{-1/(\ell-1)}dx\Big)^{\ell-1}\leq C.$$
We say that a weight $\omega\in A_1(\mathbb Q^n_p)$ if there is a constant $C$ such that for all balls $B$,
$$\dfrac{1}{|B|}\int_{B}\omega(x)dx\leq C\mathop{\rm essinf}\limits_{x\in B}\omega(x).$$
We denote by $A_{\infty}(\mathbb Q^n_p) = \bigcup\limits_{1\leq \ell<\infty}A_\ell(\mathbb Q^n_p)$.
\end{definition}
We give the following standard result related to the Muckenhoupt weights.
\begin{proposition}
\begin{enumerate}
\item[\rm (i)] $A_\ell(\mathbb Q^n_p)\subsetneq A_q(\mathbb Q^n_p)$, for $1\leq  \ell < q < \infty$.
\item[\rm (ii)] If $\omega\in A_\ell(\mathbb Q^n_p)$ for  $1 < \ell < \infty$, then there is an $\varepsilon > 0$ such that $\ell-\varepsilon > 1$ and $\omega\in A_{\ell-\varepsilon}(\mathbb Q^n_p)$.
\end{enumerate}
\end{proposition}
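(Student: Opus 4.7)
The plan is to treat (i) and (ii) separately, both along classical lines adapted to the $p$-adic setting.

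For (i), to show $A_\ell(\mathbb Q^n_p)\subset A_q(\mathbb Q^n_p)$ when $1\le\ell<q<\infty$, I would split into the cases $\ell>1$ and $\ell=1$. For $\ell>1$, set $r=(\ell-1)/(q-1)<1$ and observe the identity $\omega^{-1/(q-1)}=(\omega^{-1/(\ell-1)})^{r}$. Since $x\mapsto x^{r}$ is concave, Jensen's inequality yields
\begin{equation*}
\frac{1}{|B|}\int_B\omega^{-1/(q-1)}\,dx\le\Big(\frac{1}{|B|}\int_B\omega^{-1/(\ell-1)}\,dx\Big)^{(\ell-1)/(q-1)}.
\end{equation*}
Raising to the $(q-1)$ power and multiplying by $\frac{1}{|B|}\int_B\omega\,dx$ reduces the $A_q$ quantity to the $A_\ell$ quantity. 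For $\ell=1$, I would instead use the pointwise $A_1$ bound $\frac{1}{|B|}\int_B\omega\le C\,\mathrm{essinf}_B\omega$ together with the trivial estimate $\omega^{-1/(q-1)}\le(\mathrm{essinf}_B\omega)^{-1/(q-1)}$ to obtain the $A_q$ condition. Strictness of the inclusion follows by exhibiting a power weight $\omega(x)=|x|_p^{\alpha}$ with $n(\ell-1)\le\alpha<n(q-1)$; the standard computation on $\mathbb Q^n_p$ (using $|B_k|=p^{nk}$ and the geometric-series evaluation of $\int_{B_k}|x|_p^{\alpha}\,dx$) shows this weight lies in $A_q\setminus A_\ell$.

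For (ii), which is the self-improvement (openness) property of $A_\ell$, the plan is to transfer the Euclidean argument via the reverse H\"older inequality. First I would record the duality identity: $\omega\in A_\ell(\mathbb Q^n_p)$ if and only if $\sigma:=\omega^{-1/(\ell-1)}\in A_{\ell'}(\mathbb Q^n_p)$, where $\ell'$ is the conjugate exponent. This is a direct rearrangement of the defining inequality. Next I would invoke the reverse H\"older inequality for $A_\infty$ weights in the $p$-adic setting (available in the references cited in the excerpt): there exist $\delta>0$ and $C>0$ such that for every ball $B$,
\begin{equation*}
\Big(\frac{1}{|B|}\int_B\sigma^{1+\delta}\,dx\Big)^{1/(1+\delta)}\le\frac{C}{|B|}\int_B\sigma\,dx.
\end{equation*}
The reverse H\"older bound is established via a Calder\'on--Zygmund-type stopping-time argument; in the $p$-adic setting this is actually simpler because any two balls are either disjoint or nested, so the good-$\lambda$ inequality fits naturally.

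Given reverse H\"older for $\sigma$ with exponent $1+\delta$, I would choose $\varepsilon=\delta(\ell-1)/(1+\delta)$, which gives $(\ell-1)/(\ell-\varepsilon-1)=1+\delta$ and in particular $\omega^{-1/(\ell-\varepsilon-1)}=\sigma^{1+\delta}$. Substituting and raising the reverse H\"older inequality to the power $\ell-\varepsilon-1$ produces
\begin{equation*}
\Big(\frac{1}{|B|}\int_B\omega^{-1/(\ell-\varepsilon-1)}\,dx\Big)^{\ell-\varepsilon-1}\le C\Big(\frac{1}{|B|}\int_B\sigma\,dx\Big)^{\ell-1}.
\end{equation*}
Multiplying by $\frac{1}{|B|}\int_B\omega\,dx$ and invoking the $A_\ell$ condition for $\omega$ yields the desired $A_{\ell-\varepsilon}$ constant. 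A final check that $\varepsilon$ can be taken so small that $\ell-\varepsilon>1$ completes the argument. The main obstacle, as usual, is the reverse H\"older inequality; once it is granted, the substitution is purely algebraic, so the cleanest presentation is to cite the $p$-adic reverse H\"older result and carry out the substitution above.
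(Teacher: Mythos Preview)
The paper does not supply a proof of this proposition: it is stated as a ``standard result related to the Muckenhoupt weights'' and left unproved, with the surrounding text pointing to the literature on $A_p$ weights over local fields and spaces of homogeneous type. So there is no paper argument to compare against, and your plan is the natural way to fill the gap.

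Your outline is correct and follows the classical route. For (i), the Jensen/concavity step and the $A_1$ case are exactly the standard computations, and they go through verbatim over $\mathbb Q^n_p$ since only averages over balls are used. One small slip in the strictness example: for $\ell=1$ your range $n(\ell-1)\le\alpha<n(q-1)$ allows $\alpha=0$, but $|x|_p^{0}\equiv1\in A_1$; take $\alpha$ strictly positive there (the paper's own power-weight proposition records $|x|_p^{\alpha}\in A_1$ iff $-n<\alpha\le0$). For (ii), your use of reverse H\"older for $\sigma=\omega^{-1/(\ell-1)}$ and the choice $\varepsilon=\delta(\ell-1)/(1+\delta)$ is the standard self-improvement argument, and the algebra you wrote checks out; the paper itself invokes the reverse H\"older condition immediately after this proposition (citing \cite{IMS2015}), so appealing to it here is consistent with the paper's framework.
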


A closing relation to $A_{\infty}(\mathbb Q^n_p)$ is the reverse H\"{o}lder condition. If there exist $r > 1$ and a fixed constant $C$ such that
$$\Big(\dfrac{1}{|B|}\int_{B}\omega(x)^rdx\Big)^{1/r}\leq \dfrac{C}{|B|}\int_{B}\omega(x)dx,$$
for all balls $B \subset\mathbb Q^n_p$, we then say that $\omega$ satisfies the reverse H\"{o}lder condition of order $r$ and write $\omega\in RH_r(\mathbb Q^n_p)$. According to Theorem 19 and Corollary 21 in \cite{IMS2015}, $\omega\in A_{\infty}(\mathbb Q^n_p)$ if and
only if there exists some $r > 1$ such that $\omega\in RH_r(\mathbb Q^n_p)$. Moreover, if $\omega\in RH_r(\mathbb Q^n_p)$, $r > 1$, then $\omega\in RH_{r+\varepsilon}(\mathbb Q^n_p)$ for some $\varepsilon > 0$. We thus write $r_\omega = {\rm sup}\{r > 1: \omega\in RH_r(\mathbb Q^n_p)\}$ to denote the critical index of $\omega$ for the reverse H\"{o}lder condition.

An important example of $A_p(\mathbb Q^n_p)$ weight is the power function $|x|_p^{\alpha}$. By the similar arguments as Propositions 1.4.3 and 1.4.4 in \cite{LDY2007}, we obtain the following properties of power weights.
\begin{proposition} Let $x\in\mathbb Q^n_p$. Then, we have
\begin{itemize}
\item[(i)] $|x|^{\alpha}_p\in A_1(\mathbb Q^n_p)$ if and only if $-n< \alpha\leq 0$;
\item[(ii)] $|x|^{\alpha}_p\in A_\ell(\mathbb Q^n_p)$ for $1 < \ell< \infty$, if and only if $-n < \alpha < n(\ell-1)$.
\end{itemize}
\end{proposition}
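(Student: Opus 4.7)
The plan is to reduce both parts to computing two quantities on a generic $p$-adic ball $B=B_\gamma(a)$: the average of $|x|_p^\beta$ (for $\beta=\alpha$ and $\beta=-\alpha/(\ell-1)$) and the essential infimum of $|x|_p^\alpha$. The non-archimedean structure splits the work into two clean cases. If $|a|_p\leq p^{\gamma}$, then $a\in B_\gamma(0)$, so by the ultrametric property $B_\gamma(a)=B_\gamma(0)=B_\gamma$; if $|a|_p> p^{\gamma}$, then for every $x\in B_\gamma(a)$ the strict triangle inequality gives $|x|_p=|a|_p$, so the weight $|x|_p^\beta$ is \emph{constant} equal to $|a|_p^\beta$ on $B_\gamma(a)$. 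In the latter case both the $A_1$ and $A_\ell$ inequalities are trivial with constant $1$. So the entire content of the proposition reduces to checking the conditions on balls centered at the origin.

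For a ball $B_\gamma=B_\gamma(0)$, I would decompose into spheres and use $|S_k|=(1-p^{-n})p^{nk}$ together with the fact that $|x|_p^\beta=p^{k\beta}$ is constant on $S_k$, obtaining
\begin{equation*}
\int_{B_\gamma}|x|_p^\beta\,dx=(1-p^{-n})\sum_{k=-\infty}^{\gamma}p^{k(\beta+n)}.
\end{equation*}
The geometric series converges if and only if $\beta>-n$, and in that case it is comparable to $p^{\gamma(\beta+n)}$. Applied with $\beta=\alpha$ this gives the necessary restriction $\alpha>-n$ for integrability, and applied with $\beta=-\alpha/(\ell-1)$ it gives the companion restriction $\alpha<n(\ell-1)$ needed for the $A_\ell$ dual average to be finite.

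For the $A_1$ part, I would observe that on $B_\gamma$ the essential infimum of $|x|_p^\alpha$ equals $p^{\gamma\alpha}$ when $\alpha\leq 0$ (the infimum is attained on the outer sphere $S_\gamma$, which has positive measure), while for $\alpha>0$ the weight takes arbitrarily small values on spheres $S_k$ with $k\to-\infty$, forcing the essinf to be $0$. Combined with $\frac{1}{|B_\gamma|}\int_{B_\gamma}|x|_p^\alpha dx\simeq p^{\gamma\alpha}$, this yields the $A_1$ inequality precisely when $-n<\alpha\leq 0$, and shows failure otherwise. For the $A_\ell$ part ($\ell>1$), multiplying the two averages gives
\begin{equation*}
\Big(\tfrac{1}{|B_\gamma|}\!\int_{B_\gamma}\!|x|_p^{\alpha}dx\Big)\Big(\tfrac{1}{|B_\gamma|}\!\int_{B_\gamma}\!|x|_p^{-\alpha/(\ell-1)}dx\Big)^{\ell-1}\simeq p^{\gamma\alpha}\cdot p^{-\gamma\alpha}=1
\end{equation*}
uniformly in $\gamma$, so the sufficient condition $-n<\alpha<n(\ell-1)$ is established, and the necessity follows directly from the divergence of one of the two integrals on $B_\gamma$ whenever $\alpha$ leaves this range.

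There is no real obstacle here: this is a direct translation of the classical Euclidean proof (Propositions 1.4.3 and 1.4.4 in \cite{LDY2007}) to $\mathbb{Q}_p^n$, and the non-archimedean geometry actually simplifies matters because (a) off-center balls are either identical to centered balls or sit entirely inside a sphere, and (b) sphere-by-sphere the weight is genuinely constant, making the sums exactly geometric rather than merely comparable. The one point requiring a small amount of care is the essinf computation for $A_1$, where one should note that the single point $\{0\}$ has measure zero but the sphere $S_\gamma\subset B_\gamma$ on which $|x|_p^\alpha$ attains its minimum (for $\alpha\leq 0$) has positive measure $(1-p^{-n})p^{n\gamma}$.
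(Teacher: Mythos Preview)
Your proposal is correct and follows precisely the approach the paper itself indicates: the paper does not give a detailed proof but simply remarks that the result follows ``by the similar arguments as Propositions 1.4.3 and 1.4.4 in \cite{LDY2007}'', and your argument is exactly a faithful $p$-adic translation of that standard Euclidean computation. Your observation that the ultrametric inequality reduces the verification to balls centered at the origin (since an off-center ball either coincides with a centered one or lies entirely in a single sphere where the weight is constant) is the natural simplification in this setting, and your sphere-by-sphere geometric-series computation and essinf analysis are both correct.
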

Let us give the following standard characterization of $A_\ell$ weights which it is proved in the similar way as the real setting (see \cite{Grafakos, Stein} for more details).
\begin{proposition}\label{pro2.3DFan}
Let $\omega\in A_\ell(\mathbb Q^n_p) \cap RH_r(\mathbb Q^n_p)$, $\ell\geq 1$ and $r > 1$. Then,  there exist constants $C_1, C_2 > 0$ such that
$$
C_1\left(\dfrac{|E|}{|B|}\right)^\ell\leq \dfrac{\omega(E)}{\omega(B)}\leq C_2\left(\dfrac{|E|}{|B|}\right)^{(r-1)/r}
$$
for any measurable subset $E$ of a ball $B$.
\end{proposition}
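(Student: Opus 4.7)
The plan is to prove the two inequalities separately. The upper bound comes from combining Hölder's inequality with the reverse Hölder condition $RH_r$, while the lower bound uses Hölder's inequality paired with the $A_\ell$ condition. Both arguments work exactly as in the classical Euclidean setting; the only $p$-adic feature needed is that balls $B_\alpha(a)$ in $\mathbb Q_p^n$ satisfy $|B_\alpha(a)| = p^{n\alpha}$ and behave like Euclidean balls for the purposes of these integral inequalities.

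For the upper bound, I would write
\[
\omega(E) = \int_B \chi_E(x)\,\omega(x)\,dx \leq \Bigl(\int_B \omega(x)^r\,dx\Bigr)^{1/r} |E|^{1/r'}
\]
by Hölder with exponents $r$ and $r'$. Using $\omega \in RH_r(\mathbb Q_p^n)$ in the form $\bigl(|B|^{-1}\int_B \omega^r\bigr)^{1/r} \leq C|B|^{-1}\omega(B)$, I can replace $\bigl(\int_B \omega^r\bigr)^{1/r}$ by $C|B|^{-1/r'}\omega(B)$, which yields $\omega(E) \leq C (|E|/|B|)^{(r-1)/r}\omega(B)$.

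For the lower bound when $\ell > 1$, I would apply Hölder's inequality with exponents $\ell$ and $\ell'=\ell/(\ell-1)$ to the product $\chi_E = \chi_E \omega^{1/\ell} \cdot \omega^{-1/\ell}$ on $B$, obtaining
\[
|E| \leq \Bigl(\int_E \omega(x)\,dx\Bigr)^{1/\ell}\Bigl(\int_B \omega(x)^{-1/(\ell-1)}\,dx\Bigr)^{(\ell-1)/\ell}.
\]
Raising to the $\ell$-th power and invoking the $A_\ell$ condition to bound $\bigl(\int_B \omega^{-1/(\ell-1)}\bigr)^{\ell-1}$ by $C|B|^\ell/\omega(B)$ produces $|E|^\ell \leq C\,\omega(E)|B|^\ell/\omega(B)$, which rearranges to the claimed lower bound with exponent $\ell$. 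For $\ell = 1$ the same inequality follows even more directly from the $A_1$ condition: the definition gives $\omega(B)/|B| \leq C\,\mathrm{essinf}_{x\in B}\omega(x)$, so $|E| \cdot \omega(B)/|B| \leq C\int_E \omega(x)\,dx = C\,\omega(E)$.

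I expect no substantial obstacle; the argument is essentially bookkeeping once one recognizes that Hölder's inequality and the two defining conditions can be combined in the stated way. The only minor subtlety is to handle the $\ell = 1$ case separately (since the exponent $-1/(\ell-1)$ is undefined there) by appealing directly to the $A_1$ condition rather than to its dual formulation.
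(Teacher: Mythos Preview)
Your proposal is correct and follows the standard route; the paper does not actually supply a proof of this proposition but merely remarks that it ``is proved in the similar way as the real setting'' with references to Grafakos and Stein, and your argument is precisely that classical proof transported verbatim to $\mathbb Q_p^n$.
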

\begin{proposition}\label{pro2.4DFan}
If $\omega\in A_\ell(\mathbb Q^n_p)$, $1 \leq \ell < \infty$, then for any $f\in L^1_{\rm loc}(\mathbb Q^n_p)$ and any ball $B \subset \mathbb Q^n_p$,
$$
\dfrac{1}{|B|}\int_{B}|f(x)|dx\leq C\left(\dfrac{1}{\omega(B)}\int_{B}|f(x)|^\ell\omega(x)dx\right)^{1/\ell}.
$$
\end{proposition}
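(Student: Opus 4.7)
The plan is to reduce the statement to a direct application of Hölder's inequality combined with the definition of the Muckenhoupt class $A_\ell$. Since the $A_1$ and $A_\ell$ conditions have rather different analytic forms, I would split the argument into two cases according to whether $\ell = 1$ or $\ell > 1$.

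For the main case $\ell > 1$, I would write $|f(x)| = |f(x)|\omega(x)^{1/\ell} \cdot \omega(x)^{-1/\ell}$ and apply Hölder's inequality on $B$ with conjugate exponents $\ell$ and $\ell' = \ell/(\ell-1)$:
\[
\int_B |f(x)|\,dx \leq \left(\int_B |f(x)|^\ell \omega(x)\,dx\right)^{1/\ell}\left(\int_B \omega(x)^{-\ell'/\ell}\,dx\right)^{1/\ell'}.
\]
The key observation is that $\ell'/\ell = 1/(\ell - 1)$, so the second factor is exactly the object controlled by the $A_\ell$ condition. Rearranging that condition yields
\[
\left(\int_B \omega(x)^{-1/(\ell-1)}\,dx\right)^{\ell-1} \leq \frac{C\,|B|^\ell}{\omega(B)},
\]
which, raised to the $1/\ell$ power (and using $1/\ell' = (\ell-1)/\ell$), bounds the second Hölder factor by $C\,|B|\,\omega(B)^{-1/\ell}$. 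Dividing through by $|B|$ then produces the asserted inequality.

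For $\ell = 1$, I would invoke the $A_1$ condition in its essential-infimum form, which, after inversion, gives $\omega(x)^{-1} \leq C\,|B|/\omega(B)$ for almost every $x \in B$. Writing $|f| = |f|\omega \cdot \omega^{-1}$ and pulling this pointwise bound outside the integral produces the claim immediately.

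There is no serious obstacle here: the proof is bookkeeping with conjugate exponents and a single application of the defining weight condition in each case. The only mildly delicate point is recognizing that the Hölder exponent $\ell'/\ell$ coincides with the exponent $1/(\ell-1)$ appearing in the $A_\ell$ condition, which is precisely the reason $A_\ell$ is defined with that particular exponent. No $p$-adic feature enters beyond the fact that $|B|$ and $\omega(B)$ behave exactly as in the Euclidean setting, which is why the paper simply quotes this from \cite{Grafakos, Stein}.
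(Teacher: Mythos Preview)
Your proof is correct and is precisely the standard argument one finds in the references the paper cites. Note that the paper does not actually supply a proof of this proposition: it is stated as a standard result and attributed to \cite{Grafakos, Stein}, so there is no ``paper's own proof'' to compare against beyond observing that your H\"older-plus-$A_\ell$-definition argument is exactly the textbook one.
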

\section{The main results about the boundness of ${\mathcal{H}}^p_{\Phi,\Omega}$}\label{section3}
Let us now assume that $q$ and $q_i\in [1,\infty)$, $\alpha,\alpha_i$, $\beta_i,\beta, \ell_i,\ell$ are real numbers such that $\alpha_i \in (-n,\infty)$, $\ell,\ell_i \in [1,\infty)$, $i=1,2,...,m$ and 
$$   \frac{1}{{{q_1}}} + \frac{1}{{{q_2}}} + \cdots + \frac{1}{{{q_m}}} = \frac{1}{q}, $$
  $$   \frac{{\alpha_1}}{{{q_{1}}}} + \frac{\alpha _2}{{{q_{2}}}} + \cdots + \frac{\alpha_m}{{{q_{m}}}} = \frac{\alpha}{q},$$
$$ \beta_1+\cdots+\beta_m = \beta, $$
$$ \dfrac{1}{\ell_1}+\cdots+\dfrac{1}{\ell_m}=\dfrac{1}{\ell}.$$
\begin{theorem}\label{TheoremMorrey}
Let $\omega_i(x)=|x|^{\alpha_i}_p$, $\lambda_i\in \big(\frac{-1}{q_{i}},0\big)$, for all $i=1, ..., m$, $\omega(x)= |x|^\alpha_p$, $\Omega\in L^{q'}(S_0)$ and the following condition is true:
\begin{align}\label{lambdaMorrey}
\frac{{n + {\alpha_1}}}{{n + \alpha}}{\lambda _1} + \frac{{n + {\alpha_2}}}{{n + \alpha }}{\lambda _2} +\cdots+ \frac{{n + {\alpha_m}}}{{n + \alpha}}{\lambda _m} = \lambda.
\end{align}
Then, we have that ${\mathcal H}^p_{\Phi,\Omega}$ is bounded from ${\mathop B\limits^.}^{q_1,\lambda _1}_{\omega_1}(\mathbb Q^n_p)\times\cdots \times {\mathop B\limits^.}^{{q_m},{\lambda _m}}_{\omega_m}(\mathbb Q^n_p)$ to ${\mathop B\limits^.}^{q,\lambda}_{\omega}(\mathbb Q^n_p)$ if and only if 
\[
\mathcal C_{1}= \sum\limits_{\gamma\in\mathbb Z} {\dfrac{\Phi(p^\gamma)}{p^{\gamma(1+(n+\alpha)\lambda)}}} <  + \infty.
\]
Furthermore, we obtain ${\big\|{\mathcal H}^p_{\Phi,\Omega}\big\|_{{\mathop B\limits^.}^{{q_1},{\lambda_1}}_{{\omega_1}}(\mathbb Q^n_p) \times  \cdots \times {\mathop B\limits^.}^{{q_m},{\lambda _m}}_{{\omega_m}}(\mathbb Q^n_p) \to {\mathop B\limits^.}^{q,\lambda }_\omega(\mathbb Q^n_p)}}\simeq \mathcal C_{1}.$
\end{theorem}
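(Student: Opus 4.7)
\emph{Plan.} The statement is an ``if and only if'' with matching norms, so I would prove the two directions separately, each producing the series $\mathcal{C}_1$ as the governing constant.

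\emph{Sufficiency ($\mathcal{C}_1 < \infty \Rightarrow$ boundedness).} I would start from the pointwise estimate
\[
|\mathcal{H}^p_{\Phi,\Omega}(\vec{f})(x)| \le \sum_{\gamma \in \mathbb{Z}} \frac{\Phi(p^\gamma)}{p^\gamma}\int_{S_0} |\Omega(y)|\prod_{i=1}^m |f_i(p^\gamma|x|_p^{-1}y)|\,dy,
\]
and apply H\"older on $S_0$ twice: first with exponents $(q',q)$ to separate $\Omega$ from the product, then the multilinear H\"older with $\tfrac1q = \sum\tfrac1{q_i}$ on the remaining product. For $x \in S_k$ one has $p^\gamma|x|_p^{-1}y = p^{\gamma-k}y$, so the substitution $u = p^{\gamma-k}y$ takes $S_0$ bijectively onto $S_{k-\gamma}$ and, together with the $p$-adic change-of-variable formula and the definition of $\omega_i$, gives
\[
\int_{S_0}|f_i(p^{\gamma-k}y)|^{q_i}\,dy = p^{(\gamma-k)(n+\alpha_i)}\int_{S_{k-\gamma}}|f_i(u)|^{q_i}\omega_i(u)\,du.
\]
Dominating the right-hand side by $\omega_i(B_{k-\gamma})^{1+\lambda_i q_i}\|f_i\|_{{\mathop B\limits^.}^{q_i,\lambda_i}_{\omega_i}}^{q_i} \simeq p^{(k-\gamma)(n+\alpha_i)(1+\lambda_i q_i)}\|f_i\|^{q_i}$ collapses each factor to $p^{(k-\gamma)(n+\alpha_i)\lambda_i q_i}\|f_i\|^{q_i}$. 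Taking $1/q_i$-th roots, multiplying over $i$, and invoking the hypothesis $\sum_i(n+\alpha_i)\lambda_i = (n+\alpha)\lambda$ turns the product into $|x|_p^{(n+\alpha)\lambda}\,p^{-\gamma(n+\alpha)\lambda}\prod \|f_i\|$, after which summing in $\gamma$ assembles $\mathcal{C}_1$:
\[
|\mathcal{H}^p_{\Phi,\Omega}(\vec{f})(x)| \lesssim \mathcal{C}_1\,\|\Omega\|_{L^{q'}(S_0)}\,|x|_p^{(n+\alpha)\lambda}\prod_{i=1}^m \|f_i\|_{{\mathop B\limits^.}^{q_i,\lambda_i}_{\omega_i}(\mathbb{Q}_p^n)}.
\]
A short direct computation (using $\lambda_i > -1/q_i$, which propagates to $q\lambda > -1$ via the $\lambda$-identity) shows that $\||x|_p^{(n+\alpha)\lambda}\|_{{\mathop B\limits^.}^{q,\lambda}_\omega(\mathbb{Q}_p^n)}$ is a positive finite constant, finishing the operator bound.

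\emph{Necessity (boundedness $\Rightarrow \mathcal{C}_1 < \infty$).} For $x \ne 0$, let $\tilde{x} := |x|_p\,x \in S_0$ denote the canonical $p$-adic unit vector associated to $x$. I would take the test functions
\[
f_i(x) = |x|_p^{(n+\alpha_i)\lambda_i}\,\Omega_i(\tilde{x}),
\]
with $\Omega_i \in L^\infty(S_0)$ at our disposal. A shell decomposition combined with summing a geometric series (again via $\lambda_i > -1/q_i$) yields $\|f_i\|_{{\mathop B\limits^.}^{q_i,\lambda_i}_{\omega_i}} \lesssim \|\Omega_i\|_{L^{q_i}(S_0)}$. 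The key observation is that for any $y \in S_0$ the $p$-adic unit vector associated to $p^\gamma|x|_p^{-1}y$ is $y$ itself, so plugging the $f_i$ into the definition of the operator yields the exact identity
\[
\mathcal{H}^p_{\Phi,\Omega}(\vec{f})(x) = |x|_p^{(n+\alpha)\lambda}\,\mathcal{C}_1\int_{S_0}\Omega(y)\prod_{i=1}^m\Omega_i(y)\,dy.
\]
Choosing $\Omega_1(y) = \overline{\Omega(y)}/|\Omega(y)|$ (well-defined a.e. by the hypothesis $\Omega \ne 0$ a.e.) and $\Omega_i \equiv 1$ for $i \ge 2$, the angular integral equals $\|\Omega\|_{L^1(S_0)} > 0$. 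Since $\||x|_p^{(n+\alpha)\lambda}\|_{{\mathop B\limits^.}^{q,\lambda}_\omega} \simeq 1$, boundedness of $\mathcal{H}^p_{\Phi,\Omega}$ forces $\mathcal{C}_1 \lesssim \|\mathcal{H}^p_{\Phi,\Omega}\|$, closing the equivalence.

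\emph{Main obstacle.} The delicate part is the sufficiency bookkeeping: one must track how the $p$-powers produced by the change of variable on $S_0$, by the weighted ball-measure asymptotics $\omega_i(B_{k-\gamma}) \simeq p^{(k-\gamma)(n+\alpha_i)}$, and by the multilinear H\"older step combine, so that the scaling hypothesis $\sum_i(n+\alpha_i)\lambda_i/(n+\alpha) = \lambda$ collapses everything into a single factor $|x|_p^{(n+\alpha)\lambda}$ together with the defining series $\mathcal{C}_1$. The necessity is essentially forced once the radial-times-directional ansatz $f_i = |x|_p^{(n+\alpha_i)\lambda_i}\Omega_i(\tilde{x})$ is adopted; the only subtle point there is that $\Omega$ may change phase, which is handled by the bounded choice of $\Omega_1$ above.
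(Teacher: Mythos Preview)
Your argument is correct in both directions, and each direction differs from the paper's route in an instructive way.

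For sufficiency, the paper works at the norm level: it fixes a ball $B_R$, applies Minkowski to pull the $\gamma$-sum and the $y$-integral outside the $L^q_\omega(B_R)$ norm, then uses H\"older in the $x$-variable over $B_R$ (after decomposing $B_R$ into shells) and only at the end invokes the Morrey norms of the $f_i$. You instead apply H\"older in the $y$-variable on $S_0$ pointwise in $x$, obtain the pointwise domination $|\mathcal H^p_{\Phi,\Omega}(\vec f)(x)|\lesssim \mathcal C_1\,|x|_p^{(n+\alpha)\lambda}\prod_i\|f_i\|$, and then compute the Morrey norm of the single radial function $|x|_p^{(n+\alpha)\lambda}$. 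Both routes hinge on the same change of variable $u=p^{\gamma-k}y$ and on the scaling identity $\sum_i(n+\alpha_i)\lambda_i=(n+\alpha)\lambda$; your pointwise version is slightly more transparent and avoids the intermediate equality~(\ref{B-gamma+r}) of the paper.

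For necessity, the paper takes the purely radial test functions $f_i(x)=|x|_p^{(n+\alpha_i)\lambda_i}$, which produces the factor $\bigl|\int_{S_0}\Omega(y)\,dy\bigr|$ in the lower bound; this vanishes if $\Omega$ has mean zero on $S_0$, a case the paper does not address. Your ansatz $f_i(x)=|x|_p^{(n+\alpha_i)\lambda_i}\Omega_i(\tilde x)$ with $\Omega_1=\overline{\Omega}/|\Omega|$ forces the angular factor to be $\|\Omega\|_{L^1(S_0)}>0$ (using the standing hypothesis that $\Omega\neq 0$ a.e.\ on $S_0$), so your necessity argument is strictly more robust. The ``unit vector'' map $x\mapsto \tilde x=|x|_p\,x$ is well defined under the same convention the paper uses in writing $p^\gamma|x|_p^{-1}y$, and your key identity $\widetilde{p^{\gamma}|x|_p^{-1}y}=y$ follows immediately.
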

\begin{proof}
We prove the sufficient condition of the theorem.
For $R\in\mathbb Z$, denote
$$
\Delta_R =\Big(\dfrac{1}{\omega(B_R)^{1+\lambda q}}\int_{B_R}|{\mathcal H}^p_{\Phi,\Omega}(\vec f)(x)|^{q}\omega(x)dx\Big)^{1/q}.
$$
By using the Minkowski inequality, we get
\begin{align}
\Delta_R &\leq \dfrac{1}{\omega(B_R)^{1/q+\lambda}}\sum\limits_{\gamma\in\mathbb Z}\dfrac{\Phi(p^{\gamma})}{p^{\gamma}}\Big(\int_{B_R}\Big|\int_{S_0}\Omega(y)\prod\limits_{i=1}^{m} f_i(p^{\gamma}|x|_p^{-1}y)\omega^{1/q}(x)dy\Big|^qdx\Big)^{1/q}\nonumber
\\
&\leq \dfrac{1}{\omega(B_R)^{1/q+\lambda}}\sum\limits_{\gamma\in\mathbb Z}\dfrac{\Phi(p^{\gamma})}{p^{\gamma}}\Big(\int_{S_0}|\Omega(y)|\Big(\int_{B_R}\prod\limits_{i=1}^{m} |f_i(p^{\gamma}|x|_p^{-1}y)|^q\omega(x)dx\Big)^{1/q}dy\Big).\nonumber
\end{align}
Next, by applying the H\"{o}lder inequality, we have
\begin{align}\label{Holderqi}
&\int_{S_0}|\Omega(y)|\Big(\int_{B_R}\prod\limits_{i=1}^{m} |f_i(p^{\gamma}|x|_p^{-1}y)|^q\omega(x)dx\Big)^{1/q}dy \nonumber
\\
&\,\,\,\,\,\,\,\,\,\,\,\,\,\,\,\,\,\,\,\,\,\,\,\,\,\,\,\,\,\,\leq \|\Omega\|_{L^{q'}(S_0)}\Big(\int_{S_0}\int_{B_R}\prod\limits_{i=1}^{m} |f_i(p^{\gamma}|x|_p^{-1}y)|^q\omega(x)dxdy\Big)^{1/q}\nonumber
\\
&\,\,\,\,\,\,\,\,\,\,\,\,\,\,\,\,\,\,\,\,\,\,\,\,\,\,\,\,\,\,\leq \|\Omega\|_{L^{q'}(S_0)}\prod\limits_{i=1}^{m}\Big(\int_{S_0}\int_{B_R}|f_i(p^{\gamma}|x|_p^{-1}y)|^{q_i}\omega_i(x)dxdy\Big)^{1/q_i}.
\end{align}
Thus, we deduce
\begin{align}\label{Delr1}
\Delta_R\leq \dfrac{1}{\omega(B_R)^{1/q+\lambda}}\|\Omega\|_{L^{q'}(S_0)}\sum\limits_{\gamma\in\mathbb Z}\dfrac{\Phi(p^{\gamma})}{p^{\gamma}}\prod\limits_{i=1}^{m}\Big(\int_{S_0}\int_{B_R}|f_i(p^{\gamma}|x|_p^{-1}y)|^{q_i}\omega_i(x)dxdy\Big)^{1/q_i}.
\end{align}
On the other hand, $\int_{S_0}\int_{B_R}|f_i(p^{\gamma}|x|_p^{-1}y)|^{q_i}\omega_i(x)dxdy$ is equaled by
\begin{align}\label{S0Br}
&\int_{S_0}\sum\limits_{\theta\leq R}\int_{S_\theta}|f_i(p^{\gamma-\theta}y)|^{q_i}p^{\theta\alpha_i}dxdy=\int_{S_0}\sum\limits_{\theta\leq R}
|f_i(p^{\gamma-\theta}y)|^{q_i}p^{\theta\alpha_i}p^{n\theta}(1-p^{-n})dy\nonumber
\\
&= (1-p^{-n})p^{\gamma(n+\alpha_i)}\sum\limits_{\theta_1\leq R-\gamma }\int_{S_0}
|f_i(p^{-\theta_1}y)|^{q_i}p^{\theta_1\alpha_i}p^{n\theta_1}dy\nonumber
\\
&=(1-p^{-n}) p^{\gamma(n+\alpha_i)}\int_{B_{R-\gamma }}|f_i(z)|^{q_i}\omega_i(z)dz=(1-p^{-n})p^{\gamma(n+\alpha_i)}\|f_i\|^{q_i}_{L^{q_i}_{\omega_i}(B_{R-\gamma })}.
\end{align}
Hence, by (\ref{Delr1}), we have
\begin{align}\label{Delr2}
\Delta_R \leq \dfrac{1}{\omega(B_R)^{1/q+\lambda}}(1-p^{-n})\|\Omega\|_{L^{q'}(S_0)}\sum\limits_{\gamma\in\mathbb Z}\dfrac{\Phi(p^{\gamma})}{p^{\gamma(1-\frac{n}{q}-\frac{\alpha}{q})}}\prod\limits_{i=1}^m \|f_i\|_{L^{q_i}_{\omega_i}(B_{R-\gamma })}.
\end{align}
Now, we need to prove that
\begin{align}\label{B-gamma+r}
\dfrac{1}{\omega(B_R)^{1/q+\lambda}}=\mathcal K.\prod\limits_{i=1}^m \dfrac{p^{-\gamma(\alpha_i+n)(\frac{1}{q_i}+\lambda_i)}}{\omega_i(B_{R-\gamma })^{1/{q_i}+\lambda_i}},
\end{align}
where 
$$
\mathcal K=\frac{(1-p^{-(\alpha+n)})^{\frac{1}{q}+\lambda}\prod\limits_{i=1}^m(1-p^{-n})^{\lambda_i}}{(1-p^{-n})^{\lambda}\prod\limits_{i=1}^m(1-p^{-(\alpha_i+n)})^{\frac{1}{q_i}+\lambda_i}}.
$$
Indeed, we get
\begin{align}\label{omegaBr}
\omega(B_R)=\int_{B_R}|x|_p^{\alpha}dx=\sum\limits_{\theta\leq R}\int_{S_{\theta}}p^{\theta\alpha}dx= \sum\limits_{\theta\leq R} p^{\theta(\alpha+n)}(1-p^{-n})=\dfrac{p^{R(\alpha+n)}(1-p^{-n})}{1-p^{-(\alpha+n)}},
\end{align}
and $\omega_i(B_{R-\gamma })=\frac{p^{(R-\gamma)(\alpha_i+n)}(1-p^{-n})}{1-p^{-(\alpha_i+n)}}$. Thus, by (\ref{lambdaMorrey}), we complete the proof for (\ref{B-gamma+r}).
\\
Applying (\ref{Delr2}) and (\ref{B-gamma+r}), we imply
$$\Delta_R \leq (1-p^{-n})\mathcal K.\|\Omega\|_{L^{q'}(S_0)}\sum\limits_{\gamma\in\mathbb Z}\dfrac{\Phi(p^{\gamma})}{p^{\gamma(1+(n+\alpha)\lambda)}}\prod\limits_{i=1}^m \|f_i\|_{{\mathop B\limits^.}^{q_i,\lambda _i}_{\omega_i}(\mathbb Q^n_p)}.$$
Therefore, we have $\|{\mathcal H}^p_{\Phi,\Omega}(\vec f)\|_{{\mathop B\limits^.}^{q,\lambda}_{\omega}(\mathbb Q^n_p)}\leq (1-p^{-n})\mathcal K.\mathcal C_1\|\Omega\|_{L^{q'}(S_0)}\prod\limits_{i=1}^m \|f_i\|_{{\mathop B\limits^.}^{q_i,\lambda _i}_{\omega_i}(\mathbb Q^n_p)}.$
\\

To give the proof for necessary condition of the theorem, let us now choose
$$
f_i(x)=|x|_p^{(n+\alpha_i)\lambda_i}, \,\,\textit{\rm for all}\, i=1,...,m.
$$
Then, we get  
\begin{align}\label{normfi}
\|f_i\|^{q_i}_{L^{q_i}_{\omega_i}(B_R)}&=\int_{B_R}|x|_p^{(n+\alpha_i)\lambda_iq_i+\alpha_i}dx=\sum\limits_{\theta \leq R}\int_{S_\theta}p^{\theta((n+\alpha_i)\lambda_iq_i+\alpha_i)}dx\nonumber
\\
&= \sum\limits_{\theta \leq R} p^{\theta(n+\alpha_i)(\lambda_iq_i+1)}(1-p^{-n})= \frac{p^{R(n+\alpha_i)(\lambda_iq_i+1)}(1-p^{-n})}{1-p^{-(n+\alpha_i)(\lambda_iq_i+1)}}.
\end{align}
Thus, by the similar argument as (\ref{omegaBr}), it is not hard to show that 
\begin{align}\label{estimatefi}
\|f_i\|_{{\mathop B\limits^.}^{q_i,\lambda _i}_{\omega_i}(\mathbb Q^n_p)}=\dfrac{(1-p^{-(\alpha_i+n)})^{\frac{1}{q_i}+\lambda_i}}{(1-p^{-n})^{\lambda_i}(1-p^{-(n+\alpha_i)(\lambda_iq_i+1)})^{\frac{1}{q_i}}},  \,\,\textit{\rm for all}\, i=1,...,m.
\end{align}
Now, $\|{\mathcal H}^p_{\Phi,\Omega}(\vec f)\|_{{\mathop B\limits^.}^{q,\lambda}_{\omega}(\mathbb Q^n_p)}$ is equal to
\begin{align}
&\mathop{\rm sup}\limits_{R\in\mathbb Z}\Big(\dfrac{1}{\omega(B_R)^{1+\lambda q}}\int_{B_R}\Big|\sum_{\gamma\in\mathbb Z}\int_{S_0} \dfrac{\Phi(p^{\gamma})}{p^{\gamma}}\Omega(y)\prod\limits_{i=1}^{m} \left |p^{\gamma}|x|_p^{-1}y\right|_p^{(n+\alpha_i)\lambda_i}dy\Big|^q\omega(x)dx\Big)^{1/q}\nonumber
\\
&=\mathop{\rm sup}\limits_{R\in\mathbb Z}\Big(\dfrac{1}{\omega(B_R)^{1+\lambda q}}\int_{B_R}\Big|\sum_{\gamma\in\mathbb Z}\int_{S_0} \dfrac{\Phi(p^{\gamma})}{p^{\gamma(1+(n+\alpha)\lambda)}}\Omega(y)\left |x\right|_p^{(n+\alpha)\lambda}dy\Big|^q\omega(x)dx\Big)^{1/q}\nonumber
\\
&=\mathcal C_1.\Big|\int_{S_0}\Omega(y)dy\Big|.\mathop{\rm sup}\limits_{R\in\mathbb Z}\Big(\dfrac{1}{\omega(B_R)^{1+\lambda q}}\int_{B_R}\left |x\right|_p^{(n+\alpha)\lambda q+\alpha}dx\Big)^{1/q}\nonumber.
\end{align}
By estimating as (\ref{omegaBr}) and (\ref{normfi}), we obtain
$$
\Big(\dfrac{1}{\omega(B_R)^{1+\lambda q}}\int_{B_R}\left |x\right|_p^{(n+\alpha)\lambda q+\alpha}dx\Big)^{\frac{1}{q}}=\dfrac{(1-p^{-(\alpha+n)})^{\frac{1}{q}+\lambda}}{(1-p^{-n})^{\lambda}(1-p^{-(n+\alpha)(\lambda q+1)})^{\frac{1}{q}}}.
$$
Thus, it follows from (\ref{estimatefi}) that
$$
\|{\mathcal H}^p_{\Phi,\Omega}(\vec f)\|_{{\mathop B\limits^.}^{q,\lambda}_{\omega}(\mathbb Q^n_p)}\geq\mathcal K.\mathcal C_1.\dfrac{\prod\limits_{i=1}^m(1-p^{-(n+\alpha_i)(\lambda_iq_i+1)})^{\frac{1}{q_i}}}{(1-p^{-(n+\alpha)(\lambda q +1)})^{\frac{1}{q}}}.\Big|\int_{S_0}\Omega(y)dy\Big|.\prod\limits_{i=1}^m \|f_i\|_{{\mathop B\limits^.}^{q_i,\lambda _i}_{\omega_i}(\mathbb Q^n_p)}.
$$
Since ${\mathcal H}^p_{\Phi,\Omega}$ is bounded from ${\mathop B\limits^.}^{q_1,\lambda _1}_{\omega_1}(\mathbb Q^n_p)\times\cdots \times {\mathop B\limits^.}^{{q_m},{\lambda _m}}_{\omega_m}(\mathbb Q^n_p)$ to ${\mathop B\limits^.}^{q,\lambda}_{\omega}(\mathbb Q^n_p)$, it immediately implies that $\mathcal C_1<\infty$. Thus, the proof of the theorem is finished.
\end{proof}
Next, we have the boundedness of $p$-adic rough multilinear Hausdorff operators on weighted $\lambda$-central Morrey $p$-adic spaces associated with the Muckenhoupt weights.
\begin{theorem}\label{TheoremMorrey1}
Let $1\leq q^*,\zeta<\infty$, $\Omega\in L^{(\frac{q}{\zeta})'}(S_0),$ $\lambda_i\in (\frac{-1}{q_i},0)$, for all $i=1,...,m$ and $\omega\in A_{\zeta}$ with the finite critical index $r_{\omega}$ for the reverse H\"{o}lder condition. Assume that $q>q^*\zeta r_{\omega}/(r_{\omega}-1)$, $\delta\in (1,r_\omega)$ and the two following conditions hold:
\begin{align}\label{lambdai*}
\lambda^*=\lambda_1+\cdots+\lambda_m,
\end{align}
\begin{align}\label{C2}
\mathcal C_2=\sum\limits_{\gamma\geq 0}\dfrac{\Phi(p^{\gamma})}{p^{\gamma(1+n\zeta\lambda^*)}}+\sum\limits_{\gamma<0}\dfrac{\Phi(p^{\gamma})}{p^{\gamma(1+n\lambda^*(\delta-1)/\delta))}}<\infty.
\end{align}
Then, ${\mathcal H}^p_{\Phi,\Omega}$ is bounded from ${\mathop B\limits^.}^{q_1,\lambda _1}_{\omega}(\mathbb Q^n_p)\times\cdots \times {\mathop B\limits^.}^{{q_m},{\lambda _m}}_{\omega}(\mathbb Q^n_p)$ to ${\mathop B\limits^.}^{q^*,\lambda^*}_{\omega}(\mathbb Q^n_p)$.
\end{theorem}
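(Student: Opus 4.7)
My plan is to adapt the proof of Theorem \ref{TheoremMorrey}, replacing the explicit computations available for power weights with two tools tailored to a general Muckenhoupt weight: Proposition \ref{pro2.4DFan} (the $A_\zeta$ averaging inequality), used to convert unweighted spherical integrals of the $f_i$'s into weighted $L^{q_i}_\omega$ norms, and Proposition \ref{pro2.3DFan} (in both directions), used to compare $\omega$-measures of different balls through the $A_\zeta$ and $RH_\delta$ conditions. Setting $I_\gamma(x):=\int_{S_0}\Omega(y)\prod_{i=1}^m f_i(p^\gamma|x|_p^{-1}y)\,dy$ and $\Delta_R:=\omega(B_R)^{-(1/q^*+\lambda^*)}\|{\mathcal H}^p_{\Phi,\Omega}(\vec f)\|_{L^{q^*}_\omega(B_R)}$, I will first apply Minkowski's inequality in $\gamma$ to reduce the task to estimating $\|I_\gamma\|_{L^{q^*}_\omega(B_R)}$ for each fixed $\gamma\in\mathbb Z$.

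Since $I_\gamma(x)$ depends on $x$ only through $|x|_p$, it is constant on every sphere $S_\theta$. For $x\in S_\theta$, I will apply H\"older in $y$ with exponents $(q/\zeta)'$ on $\Omega$ and $q_i/\zeta$ on each $f_i(p^{\gamma-\theta}\,\cdot\,)$, valid since $1/(q/\zeta)'+\sum\zeta/q_i=1$. The $p$-adic dilation $z=p^{\gamma-\theta}y$ maps $S_0$ onto $S_{\theta-\gamma}$ with Jacobian $p^{n(\gamma-\theta)}$, so $\|f_i(p^{\gamma-\theta}\,\cdot\,)\|_{L^{q_i/\zeta}(S_0)}^{q_i/\zeta}=p^{n(\gamma-\theta)}\int_{S_{\theta-\gamma}}|f_i|^{q_i/\zeta}dz$; extending the sphere to the ball $B_{\theta-\gamma}$ and invoking Proposition \ref{pro2.4DFan} with $\ell=\zeta$ and $h=|f_i|^{q_i/\zeta}$ gives the bound $C\omega(B_{\theta-\gamma})^{-1/\zeta}\|f_i\|_{L^{q_i}_\omega(B_{\theta-\gamma})}^{q_i/\zeta}$ (the factor $|B_{\theta-\gamma}|=p^{n(\theta-\gamma)}$ cancelling $p^{n(\gamma-\theta)}$). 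Applying the Morrey bound $\|f_i\|_{L^{q_i}_\omega(B_{\theta-\gamma})}\leq\omega(B_{\theta-\gamma})^{1/q_i+\lambda_i}\|f_i\|_{{\mathop{B}\limits^.}^{q_i,\lambda_i}_\omega}$ and using $\sum 1/q_i=1/q$, $\sum\lambda_i=\lambda^*$, I will obtain the pointwise bound
\[
|I_\gamma(x)|\leq C\|\Omega\|_{L^{(q/\zeta)'}(S_0)}\,\omega(B_{\theta-\gamma})^{\lambda^*}\prod_{i=1}^m\|f_i\|_{{\mathop{B}\limits^.}^{q_i,\lambda_i}_\omega(\mathbb Q^n_p)}\qquad(x\in S_\theta).
\]

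Integration via $\omega(S_\theta)\simeq\omega(B_\theta)$ (doubling of $\omega\in A_\zeta$) then gives $\|I_\gamma\|_{L^{q^*}_\omega(B_R)}^{q^*}\lesssim(\|\Omega\|\prod\|f_i\|)^{q^*}\sum_{\theta\leq R}\omega(B_\theta)\omega(B_{\theta-\gamma})^{q^*\lambda^*}$. I will bound this sum by applying Proposition \ref{pro2.3DFan} twice: its $RH_\delta$ upper bound to $B_\theta\subset B_R$ yields $\omega(B_\theta)\leq C\omega(B_R)p^{n(\theta-R)(\delta-1)/\delta}$, while its $A_\zeta$ lower bound to $B_{\theta-\gamma}\subset B_{R-\gamma}$ gives $\omega(B_{\theta-\gamma})\geq C\omega(B_{R-\gamma})p^{n(\theta-R)\zeta}$. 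Since $q^*\lambda^*<0$, each summand is then bounded by $C\omega(B_R)\omega(B_{R-\gamma})^{q^*\lambda^*}p^{n(\theta-R)[(\delta-1)/\delta+\zeta q^*\lambda^*]}$. The hypothesis $q>q^*\zeta r_\omega/(r_\omega-1)$, together with $|\lambda^*|<1/q$ following from $\lambda_i\in(-1/q_i,0)$, permits $\delta\in(1,r_\omega)$ to be chosen close enough to $r_\omega$ that this bracket is positive, making the geometric series in $\theta$ convergent to a constant independent of $R$. After dividing by $\omega(B_R)^{1/q^*+\lambda^*}$, the estimate for $\Delta_R$ reduces to bounding $\sum_\gamma\frac{\Phi(p^\gamma)}{p^\gamma}\bigl(\omega(B_{R-\gamma})/\omega(B_R)\bigr)^{\lambda^*}$: using the $A_\zeta$ direction of Proposition \ref{pro2.3DFan} for $\gamma\geq 0$ (where $B_{R-\gamma}\subset B_R$) produces the factor $p^{-n\gamma\zeta\lambda^*}$ yielding the first sum in $\mathcal C_2$, while the $RH_\delta$ direction for $\gamma<0$ (where $B_R\subset B_{R-\gamma}$) produces $p^{-n\gamma(\delta-1)\lambda^*/\delta}$ yielding the second sum. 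The main technical obstacle is precisely this compatibility: one must verify that the hypothesis on $q$ admits a choice of $\delta<r_\omega$ making the geometric series in the $\theta$-summation converge, and simultaneously the two opposite-direction comparisons between $\omega(B_R)$ and $\omega(B_{R-\gamma})$ conspire to produce exactly the two summands of $\mathcal C_2$.
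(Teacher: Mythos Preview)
Your argument is correct and lands on the same endgame as the paper---bounding $\sum_\gamma\frac{\Phi(p^\gamma)}{p^\gamma}\bigl(\omega(B_{R-\gamma})/\omega(B_R)\bigr)^{\lambda^*}$ by $\mathcal C_2$ via the two directions of Proposition~\ref{pro2.3DFan}---but the route to that point is genuinely different. The paper does not work sphere by sphere: after Minkowski it applies H\"older on the whole ball $B_R$ with exponents $r'=q/(q^*\zeta)$ and $r\in(1,r_\omega)$ (this is precisely where the hypothesis $q>q^*\zeta r_\omega/(r_\omega-1)$ is consumed), then invokes the reverse H\"older condition on $(\int_{B_R}\omega^r)^{1/r}$ to peel off the weight as $|B_R|^{-\zeta/q}\omega(B_R)^{1/q^*}$. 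What remains is the \emph{unweighted} integral $\int_{S_0}\int_{B_R}|f_i(p^\gamma|x|_p^{-1}y)|^{q_i/\zeta}\,dx\,dy$, which is computed exactly (as in (\ref{S0Br}) with $\alpha_i=0$) and gives $\|f_i\|_{L^{q_i/\zeta}(B_{R-\gamma})}$ directly; Proposition~\ref{pro2.4DFan} then converts this to the weighted norm on the single ball $B_{R-\gamma}$. Your approach trades this clean separation for a pointwise bound depending on $\theta$, forcing you to control the extra sum $\sum_{\theta\le R}\omega(B_\theta)\,\omega(B_{\theta-\gamma})^{q^*\lambda^*}$. That sum does converge as you claim, but be careful: the reverse H\"older exponent you must pick there to make the bracket $(\delta-1)/\delta+\zeta q^*\lambda^*$ positive is an \emph{auxiliary} parameter, distinct from the $\delta$ fixed in the hypothesis and appearing in $\mathcal C_2$; use separate notation, or you are only proving the theorem for $\delta$ sufficiently close to $r_\omega$. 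The paper's path is more economical (one use of reverse H\"older rather than two, no $\theta$-sum to close), while yours has the merit of producing a pointwise control on $I_\gamma$.
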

\begin{proof}
According to the Minkowski inequality, it implies that
\begin{align}\label{HfMor1}
\|{\mathcal H}^p_{\Phi,\Omega}(\vec f)\|_{L_\omega^{q^*}(B_R)} &\leq \sum\limits_{\gamma\in\mathbb Z}\dfrac{\Phi(p^{\gamma})}{p^{\gamma}}\Big(\int_{S_0}|\Omega(y)|\Big(\int_{B_R}\prod\limits_{i=1}^{m} |f_i(p^{\gamma}|x|_p^{-1}y)|^{q^*}\omega(x)dx\Big)^{1/{q^*}}dy\Big).
\end{align}
In view of the condition $q>q^*\zeta r_\omega/(r_\omega-1)$, there exists $r\in(1,r_\omega)$ such that $q/\zeta=q^*r'$. By  the H\"{o}lder inequality and the reverse H\"{o}lder condition, we have
\begin{align}\label{reverMor1}
\Big(\int_{B_R}\prod\limits_{i=1}^{m} |f_i(p^{\gamma}|x|_p^{-1}y)|^{q^*}\omega(x)dx\Big)^{1/{q^*}}&\leq \Big(\int_{B_R}\prod\limits_{i=1}^{m} |f_i(p^{\gamma}|x|_p^{-1}y)|^{\frac{q}{\zeta}}dx\Big)^{\zeta/{q}}\Big(\int_{B_R}\omega^r(x)dx\Big)^{1/{rq*}}\nonumber
\\
&\lesssim \Big(\int_{B_R}\prod\limits_{i=1}^{m} |f_i(p^{\gamma}|x|_p^{-1}y)|^{\frac{q}{\zeta}}dx\Big)^{\zeta/{q}}\omega^{\frac{1}{q*}}(B_R)|B_R|^{\frac{-\zeta}{q}}.
\end{align}
Next, by using the H\"{o}der inequality and estimating as (\ref{Holderqi}) and (\ref{S0Br}) above, we deduce
\begin{align}
\int_{S_0}|\Omega(y)|\Big(\int_{B_R}\prod\limits_{i=1}^{m} |f_i(p^{\gamma}|x|_p^{-1}y)|^{\frac{q}{\zeta}}dx\Big)^{\zeta/{q}}dy
&\leq \|\Omega\|_{L^{(\frac{q}{\zeta})'}(S_0)}\prod\limits_{i=1}^{m}\Big(\int_{S_0}\int_{B_R}|f_i(p^{\gamma}|x|_p^{-1}y)|^{\frac{q_i}{\zeta}}dxdy\Big)^{\zeta/q_i}.
\nonumber
\\
&\lesssim \|\Omega\|_{L^{(\frac{q}{\zeta})'}(S_0)}\prod\limits_{i=1}^m p^{\frac{\gamma\zeta n}{q_i}}\|f_i\|_{L^{q_i/\zeta}(B_{R-\gamma})}.\nonumber
\end{align}
Thus, by (\ref{HfMor1}) and (\ref{reverMor1}), we imply 
\begin{align}
\|{\mathcal H}^p_{\Phi,\Omega}(\vec f)\|_{L_\omega^{q^*}(B_R)} &\lesssim \omega^{\frac{1}{q*}}(B_R)|B_R|^{\frac{-\zeta}{q}}\|\Omega\|_{L^{(\frac{q}{\zeta})'}(S_0)}\sum\limits_{\gamma\in\mathbb Z}\dfrac{\Phi(p^{\gamma})}{p^{\gamma(1-\frac{\zeta n}{q})}}\prod\limits_{i=1}^m \|f_i\|_{L^{q_i/\zeta}(B_{R-\gamma})}.\nonumber
\end{align}
For $i=1,...,m$, by applying Proposition \ref{pro2.4DFan} again, we have
$$
\|f_i\|_{L^{q_i/\zeta}(B_{R-\gamma})}\lesssim |B_{R-\gamma}|^{\frac{\zeta}{q_i}}\omega^{\frac{-1}{q_i}}(B_{R-\gamma})\|f_i\|_{L_\omega^{q_i}(B_{R-\gamma})}.
$$
It follows that
\begin{align}\label{HfMor2}
\|{\mathcal H}^p_{\Phi,\Omega}(\vec f)\|_{L_\omega^{q^*}(B_R)}&
\lesssim\|\Omega\|_{L^{(\frac{q}{\zeta})'}(S_0)}\sum\limits_{\gamma\in\mathbb Z}\dfrac{\Phi(p^{\gamma})}{p^{\gamma(1-\frac{\zeta n}{q})}}\Big(\dfrac{|B_{R-\gamma}|}{|B_R|}\Big)^{\frac{\zeta}{q}}
\dfrac{\omega^{1/q^*}(B_R)}{\omega^{1/q}(B_{R-\gamma})}\prod\limits_{i=1}^m\|f_i\|_{L^{q_i}_\omega(B_{R-\gamma})}\nonumber
\\
&\lesssim \|\Omega\|_{L^{(\frac{q}{\zeta})'}(S_0)}\sum\limits_{\gamma\in\mathbb Z}\dfrac{\Phi(p^{\gamma})}{p^{\gamma}}.\dfrac{\omega^{1/q^*}(B_R)}{\omega^{1/q}(B_{R-\gamma})}\prod\limits_{i=1}^m\|f_i\|_{L^{q_i}_\omega(B_{R-\gamma})}.
\end{align}
Consequently, by (\ref{lambdai*}), we have $\dfrac{1}{\omega^{\frac{1}{q*}+\lambda*}(B_R)}\|{\mathcal H}^p_{\Phi,\Omega}(\vec f)\|_{L_\omega^{q^*}(B_R)}$ is controlled by
\begin{align}\label{HfMor3}
\|\Omega\|_{L^{(\frac{q}{\zeta})'}(S_0)}\prod\limits_{i=1}^m\|f_i\|_{{\mathop B\limits^.}^{q_i,\lambda_i}_{\omega}(\mathbb Q^n_p)}\times\Big(\sum\limits_{\gamma\in\mathbb Z}\dfrac{\Phi(p^{\gamma})}{p^{\gamma}}\Big(\dfrac{\omega(B_{R-\gamma})}{\omega(B_R)}\Big)^{\lambda^*}\Big).
\end{align}
Because of having (\ref{lambdai*}), it also gives $\lambda^*<0$. Thus, by $\delta\in(1,r_\omega)$ and Proposition \ref{pro2.3DFan}, if $\gamma\geq 0 $
\begin{align}\label{BRGa>=0}
\Big(\dfrac{\omega(B_{R-\gamma})}{\omega(B_R)}\Big)^{\lambda^*}\lesssim \Big(\dfrac{|B_{R-\gamma}|}{|B_R|}\Big)^{\zeta\lambda^*} \lesssim p^{-\gamma n\zeta\lambda^*},
\end{align}
and otherwise,
\begin{align}\label{BRGa<0}
\Big(\dfrac{\omega(B_{R-\gamma})}{\omega(B_R)}\Big)^{\lambda^*}\lesssim \Big(\dfrac{|B_{R-\gamma}|}{|B_R|}\Big)^{\lambda^*(\delta-1)/\delta} \lesssim p^{-\gamma n\lambda^*(\delta-1)/\delta)}.
\end{align}
By (\ref{HfMor3}), (\ref{BRGa>=0}) and (\ref{BRGa<0}), we obtain
$$
\|{\mathcal H}^p_{\Phi,\Omega}\|_{{\mathop B\limits^.}^{q^*,\lambda^*}_{\omega}(\mathbb Q^n_p)}\lesssim \mathcal C_2.\|\Omega\|_{L^{(\frac{q}{\zeta})'}(S_0)}\prod\limits_{i=1}^m\|f_i\|_{{\mathop B\limits^.}^{q_i,\lambda_i}_{\omega}(\mathbb Q^n_p)}.
$$
This completes the proof of theorem.
\end{proof}
\begin{theorem}\label{TheoremHerz}
Let $\omega_1(x)=|x|^{\alpha_1}_p,...,\omega_m(x)=|x|^{\alpha_m}_p$, $\omega(x)= |x|^\alpha_p$ and $\Omega\in L^{q'}(S_0)$. Then, we have that ${\mathcal H}^p_{\Phi,\Omega}$ is bounded from ${K}^{\beta_1,\ell_1}_{q_1,\omega_1}(\mathbb Q^n_p)\times\cdots \times {K}^{{\beta_m, \ell_m}}_{q_m,\omega_m}(\mathbb Q^n_p)$ to ${K}^{\beta,\ell}_{q, \omega}(\mathbb Q^n_p)$ if and only if 
$$
\mathcal C_{3}= \sum\limits_{\gamma\in\mathbb Z} {\dfrac{\Phi(p^\gamma)}{p^{\gamma(1-\beta-\frac{n+\alpha}{q})}}} <  + \infty.
$$
Furthermore, ${\big\|{\mathcal H}^p_{\Phi,\Omega}\big\|_{{K}^{\beta_1,\ell_1}_{q_1,\omega_1}(\mathbb Q^n_p)\times\cdots \times {K}^{{\beta_m, \ell_m}}_{q_m,\omega_m}(\mathbb Q^n_p)\to{K}^{\beta,\ell}_{q, \omega}(\mathbb Q^n_p)}\simeq \mathcal C_{3}}.$
\end{theorem}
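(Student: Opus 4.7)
My plan is to follow the blueprint of the proof of Theorem~\ref{TheoremMorrey} for the central Morrey case, but adapted to the $\ell^\ell$-summation structure of the Herz norm, working throughout from the identity
\[
\|f\|_{K^{\beta,\ell}_{q,\omega}(\mathbb Q^n_p)}^\ell = \sum_{k\in\mathbb Z} p^{k\beta\ell}\|f\chi_k\|_{L^q_\omega(\mathbb Q^n_p)}^\ell.
\]

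For the sufficient direction I would apply Minkowski to pull $\sum_\gamma$ outside the $L^q_\omega$-norm. Since for $x\in S_k$ the argument $p^\gamma|x|_p^{-1}y$ depends on $x$ only through $|x|_p=p^k$, the $L^q_\omega$-norm on $S_k$ collapses to a scalar times $\omega(S_k)^{1/q}\simeq p^{k(n+\alpha)/q}$. Two H\"older applications---first with the pair $(q,q')$ to peel off $\Omega$, then with the exponents $q_i$ (using $\sum 1/q_i=1/q$) to split the product---reduce the estimate to $\bigl(\int_{S_0}|f_i(p^{\gamma-k}y)|^{q_i}dy\bigr)^{1/q_i}$. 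The $p$-adic dilation formula applied to each of these integrals, exactly as in (\ref{S0Br}), gives $p^{(\gamma-k)(n+\alpha_i)/q_i}\|f_i\chi_{k-\gamma}\|_{L^{q_i}_{\omega_i}}$. Invoking $\sum(n+\alpha_i)/q_i=(n+\alpha)/q$ the residual $k$-dependence collapses, and the splitting $p^{k\beta}=p^{\gamma\beta}\prod_i p^{(k-\gamma)\beta_i}$ (valid since $\sum\beta_i=\beta$) produces exactly the summand $\Phi(p^\gamma)/p^{\gamma(1-\beta-(n+\alpha)/q)}$. I then take the $\ell^\ell$-norm in $k$: Minkowski in $\ell^\ell$ pulls the $\gamma$-sum outside as $\mathcal C_3$, while a generalized H\"older in $k$ with exponents $\ell_i/\ell\geq 1$ (using $\sum 1/\ell_i=1/\ell$) followed by the reindexing $k'=k-\gamma$ shows that the residual factor equals $\prod_i\|f_i\|_{K^{\beta_i,\ell_i}_{q_i,\omega_i}}$, \emph{independently of $\gamma$}.

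For the necessary direction I would use the truncated power test functions
\[
f_i^{(L)}(x)=|x|_p^{-\beta_i-(n+\alpha_i)/q_i}\,\chi_{\{p^{-L}\le|x|_p\le p^L\}}(x),\qquad L\in\mathbb N.
\]
A direct calculation analogous to (\ref{normfi})--(\ref{estimatefi}) gives $\|f_i^{(L)}\chi_{k_0}\|_{L^{q_i}_{\omega_i}}=(1-p^{-n})^{1/q_i}\,p^{-k_0\beta_i}$ for $|k_0|\le L$ and zero otherwise, hence $\|f_i^{(L)}\|_{K^{\beta_i,\ell_i}_{q_i,\omega_i}}\simeq(2L+1)^{1/\ell_i}$. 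The same $p$-adic scaling used in the sufficiency step then evaluates the operator explicitly on $S_k$:
\[
\mathcal H^p_{\Phi,\Omega}(\vec f^{(L)})(x)=\Big(\int_{S_0}\Omega(y)\,dy\Big)p^{-k\sigma}\sum_{\gamma=k-L}^{k+L}a_\gamma,
\]
with $\sigma=\beta+(n+\alpha)/q$ and $a_\gamma=\Phi(p^\gamma)/p^{\gamma(1-\beta-(n+\alpha)/q)}$. Taking Herz norms and applying the assumed boundedness yields
\[
\Big|\int_{S_0}\Omega(y)\,dy\Big|\Big(\sum_k b_k(L)^\ell\Big)^{1/\ell}\lesssim\big\|\mathcal H^p_{\Phi,\Omega}\big\|\,(2L+1)^{1/\ell},
\]
where $b_k(L)=\sum_{|\gamma-k|\le L}a_\gamma$. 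For $|k|\le L/2$ the window $[k-L,k+L]$ contains $[-L/2,L/2]$, so $b_k(L)\ge c(L/2):=\sum_{|\gamma|\le L/2}a_\gamma$; bounding the sum from below by $\simeq L\cdot c(L/2)^\ell$ and letting $L\to\infty$ (so that $c(L/2)\uparrow\mathcal C_3$) yields $\mathcal C_3\lesssim\|\mathcal H^p_{\Phi,\Omega}\|/\bigl|\int_{S_0}\Omega(y)dy\bigr|$, provided $\int_{S_0}\Omega\neq 0$.

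The main difficulty is in the necessary direction: a single-sphere test $\chi_{S_{k_0}}$ (the most obvious Herz analogue of the power test used in Theorem~\ref{TheoremMorrey}) only produces $\sum_\gamma a_\gamma^\ell<\infty$, which is strictly weaker than $\mathcal C_3=\sum_\gamma a_\gamma<\infty$ when $\ell>1$. Coupling the scales through the truncation window $[-L,L]$ is exactly what forces both sides of the key inequality to grow like $(2L+1)^{1/\ell}$; the matching of growth rates, together with the observation that $b_k(L)$ is essentially constant and close to $\mathcal C_3$ on a proportion of $k$'s tending to $1$, is precisely what upgrades the conclusion from $\ell^\ell$- to $\ell^1$-summability and thereby forces $\mathcal C_3<\infty$.
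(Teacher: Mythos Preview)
Your sufficiency argument coincides with the paper's: Minkowski to extract the $\gamma$-sum, H\"older to separate $\Omega$ and split the $m$-fold product, the dilation identity on $S_k$ (the paper's (\ref{S0Sr}), the sphere analogue of (\ref{S0Br})), then Minkowski and H\"older in the $k$-variable with exponents $\ell_i$ to produce $\prod_i\|f_i\|_{K^{\beta_i,\ell_i}_{q_i,\omega_i}}$.

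For necessity the paper takes a different route. Rather than your symmetric truncation at the exact critical exponent, it uses one-sided perturbed powers
\[
f_{i,r}(x)=|x|_p^{-\beta_i-(n+\alpha_i)/q_i-1/p^r}\,\chi_{\{|x|_p\ge1\}}(x),\qquad r\in\mathbb Z^+,
\]
whose Herz norms are convergent geometric series $\simeq(1-p^{-\ell_i/p^r})^{-1/\ell_i}$. After computing $\mathcal H^p_{\Phi,\Omega}(\vec f_r)$ and restricting to $|x|_p\ge p^r$, the paper obtains a lower bound involving the perturbed partial sum $\sum_{\gamma\le r}a_\gamma\,p^{\gamma m/p^r}$; it then lets $r\to\infty$, checks that an auxiliary ratio $\mathcal A(r)$ of the geometric-series constants has a positive limit, and invokes a convergence theorem to recover $\mathcal C_3$. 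Your approach avoids the exponent perturbation and the attendant limit analysis: the growth-rate matching $(2L+1)^{1/\ell}$ on both sides, together with the elementary inclusion $[-L/2,L/2]\subset[k-L,k+L]$ for $|k|\le L/2$, yields $c(L/2)\lesssim\|\mathcal H^p_{\Phi,\Omega}\|$ directly and hence $\mathcal C_3<\infty$ by monotone convergence. The paper's perturbed-exponent method is the more classical template and ports easily to other norms; yours is more self-contained and also makes explicit why a single-sphere test cannot work (it only gives $\ell^\ell$-summability of $a_\gamma$). Both arguments rely on the tacit hypothesis $\int_{S_0}\Omega\neq0$, which the paper uses without comment as well.
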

\begin{proof}
We begin with the proof for the sufficient condition of theorem. For $k\in\mathbb Z$, by using Minkowski inequality and H\"{o}lder inequality again and making as (\ref{Delr1}) above,  we have
$$
\|{\mathcal H}^p_{\Phi,\Omega}(\vec f)\chi_k\|_{L^q_\omega(Q_p^n)}\leq \|\Omega\|_{L^{q'}(S_0)}\sum\limits_{\gamma\in\mathbb Z}\dfrac{\Phi(p^{\gamma})}{p^{\gamma}}\prod\limits_{i=1}^{m}\Big(\int_{S_0}\int_{S_k}|f_i(p^{\gamma}|x|_p^{-1}y)|^{q_i}\omega_i(x)dxdy\Big)^{1/q_i}.
$$
Next, a simple calculation as (\ref{S0Br}) above follows us to have that
\begin{align}\label{S0Sr}
&\int_{S_0}\int_{S_k}|f_i(p^{\gamma}|x|_p^{-1}y)|^{q_i}\omega_i(x)dxdy =\int_{S_0}\int_{S_k}|f_i(p^{\gamma-k}y)|^{q_i}p^{k\alpha_i}dxdy\nonumber
\\
&\simeq \int_{S_0}|f_i(p^{\gamma-k}y)|^{q_i}p^{k\alpha_i}p^{nk}dy= p^{\gamma(n+\alpha_i)}\int_{S_0}|f_i(p^{-(-\gamma+k)}y)|^{q_i}p^{(-\gamma+k)\alpha_i}p^{n(-\gamma+k)}dy\nonumber
\\
&= p^{\gamma(n+\alpha_i)}\int_{S_{-\gamma+k}}|f_i(z)|^{q_i}\omega_i(z)dz= p^{\gamma(n+\alpha_i)}\|f_i\chi_{-\gamma+k}\|^{q_i}_{L^{q_i}_{\omega_i}(\mathbb Q_p^n)}.
\end{align}
Thus, we have
\begin{align}\label{HfChik}
\|{\mathcal H}^p_{\Phi,\Omega}(\vec f)\chi_k\|_{L^q_\omega(Q_p^n)}\lesssim \|\Omega\|_{L^{q'}(S_0)}\sum\limits_{\gamma\in\mathbb Z}\dfrac{\Phi(p^{\gamma})}{p^{\gamma(1-\frac{n+\alpha}{q})}}\prod\limits_{i=1}^{m}\|f_i\chi_{-\gamma+k}\|_{L^{q_i}_{\omega_i}(\mathbb Q_p^n)}.
\end{align}
Combining this with the Minkowski inequality, we have
\begin{align}
\|{\mathcal H}^p_{\Phi,\Omega}(\vec f)\|_{K^{\beta,\ell}_{q,\omega}(\mathbb Q_p^n)}&= \Big(\sum\limits_{k=-\infty}^{\infty}p^{k\beta\ell}\|{\mathcal H}^p_{\Phi,\Omega}(\vec f)\chi_k\|_{L^q_\omega(Q_p^n)}^\ell\Big)^{1/\ell}\nonumber
\\
&\lesssim\|\Omega\|_{L^{q'}(S_0)}\Big(\sum\limits_{k=-\infty}^{\infty}\Big(\sum\limits_{\gamma\in\mathbb Z}\dfrac{\Phi(p^{\gamma})}{p^{\gamma(1-\frac{n+\alpha}{q})}}p^{k\beta}\prod\limits_{i=1}^{m}\|f_i\chi_{-\gamma+k}\|_{L^{q_i}_{\omega_i}(\mathbb Q_p^n)}\Big)^\ell\Big)^{1/\ell}\nonumber
\\
&\lesssim\|\Omega\|_{L^{q'}(S_0)}\sum\limits_{\gamma\in\mathbb Z}\dfrac{\Phi(p^{\gamma})}{p^{\gamma(1-\frac{n+\alpha}{q})}}\Big(\sum\limits_{k=-\infty}^{\infty}p^{k\beta\ell}\prod\limits_{i=1}^{m}\|f_i\chi_{-\gamma+k}\|^\ell_{L^{q_i}_{\omega_i}(\mathbb Q_p^n)}\Big)^{1/\ell}.
\nonumber
\end{align}
In view of  the H\"{o}lder inequality and the definition of the Herz space, we obtain that
\begin{align}
&\Big(\sum\limits_{k=-\infty}^{\infty}p^{k\beta\ell}\prod\limits_{i=1}^{m}\|f_i\chi_{-\gamma+k}\|^\ell_{L^{q_i}_{\omega_i}(\mathbb Q_p^n)}\Big)^{1/\ell}\nonumber
\\
&\,\,\,\leq \prod\limits_{i=1}^m\Big(\sum\limits_{k=-\infty}^{\infty}p^{k\beta_i\ell_i}\|f_i\chi_{-\gamma+k}\|^{\ell_i}_{L^{q_i}_{\omega_i}(\mathbb Q_p^n)}\Big)^{1/\ell_i}=\prod\limits_{i=1}^{m}p^{\gamma\beta}\|f_i\|_{K^{\beta_i,\ell_i}_{q_i,\omega_i}(\mathbb Q^n_p)},
\nonumber
\end{align}
which implies that
$\|{\mathcal H}^p_{\Phi,\Omega}(\vec f)\|_{K^{\beta,\ell}_{q,\omega}(\mathbb Q_p^n)}\lesssim \mathcal  C_2.\|\Omega\|_{L^{q'}(S_0)}.\prod\limits_{i=1}^{m}\|f_i\|_{K^{\beta_i,\ell_i}_{q_i,\omega_i}(\mathbb Q^n_p)}.$

Conversely, suppose that  ${\mathcal H}^p_{\Phi,\Omega}$ is bounded from ${K}^{\beta_1,\ell_1}_{q_1,\omega_1}(\mathbb Q^n_p)\times\cdots \times {K}^{{\beta_m, \ell_m}}_{q_m,\omega_m}(\mathbb Q^n_p)$ to ${K}^{\beta,\ell}_{q, \omega}(\mathbb Q^n_p)$. For $i=1,...,m$ and $r\in\mathbb Z^+$, let us choose the following functions
$$
f_{i,r}(x) = \left\{ \begin{array}{l}
0\,\,\,\,\,\,\,\,\,\,\,\,\,\,\,\,\,\,\,\,\,\,\,\,\,\,\,\,\,\,\,\,\,\,,|x|_p<1,
\\
|x|_p^{-\beta_i-\frac{n+\alpha_i}{q_i}-\frac{1}{p^r}}, |x|_p\geq 1.
\end{array} \right.
$$
It is clear to see that when $k<0$ then $f_{i,r}\chi_k=0$. Otherwise, we have
\begin{align}\label{fir}
\|f_{i,r}\chi_k\|_{L^{q_i}_{\omega_i}(\mathbb Q^n_p)}^{q_i}=\int_{S_k}|x|^{-\beta_iq_i-n-\frac{q_i}{p^r}}dx \simeq p^{k(-\beta_iq_i-n-\frac{q_i}{p^r})}p^{kn}=p^{-kq_i(\beta_i+\frac{1}{p^r})}.
\end{align}
Thus, it leads that $\|f_{i,r}\|_{K^{\beta_i,\ell_i}_{q_i,\omega_i}(\mathbb Q^n_p)}>0$. Moreover,
\begin{align}\label{estimatefir}
\|f_{i,r}\|_{K^{\beta_i,\ell_i}_{q_i,\omega_i}(\mathbb Q^n_p)}&=\Big(\sum\limits_{k=-\infty}^{\infty}p^{k\beta_i\ell_i}\|f_{i,r}\chi_k\|^{\ell_i}_{L^{q_i}_{\omega_i}(\mathbb Q_p^n)}\Big)^{1/\ell_i}
\nonumber
\\
&\simeq \Big(\sum\limits_{k=0}^{\infty}p^{\frac{-k\ell_i}{p^r}}\Big)^{1/\ell_i} = \dfrac{1}{\Big(1-p^{\frac{-\ell_i}{p^r}}\Big)^{1/\ell_i}}<\infty.
\end{align}
Now, we get
\begin{align}
\left|{\mathcal H}^p_{\Phi,\Omega}(\vec f)(x)\right|&=\Big|\sum\limits_{\gamma\in\mathbb Z}\int_{S_0}\frac{\Phi(p^{-\gamma})}{p^{-\gamma}}\Omega(y)\prod\limits_{i=1}^{m}f_{i,r}(p^{-\gamma}|x|_p^{-1}y)dy\Big|\nonumber
\\
&=\Big|\int_{S_0}\Omega(y)dy\Big|.\Big|\sum\limits_{\gamma\geq -{\rm log}_p|x|_p}\dfrac{\Phi(p^{-\gamma})}{p^{-\gamma(1-\beta-\frac{n+\alpha}{q}-\frac{m}{p^r})}}\Big|.|x|_p^{-(\beta+\frac{n+\alpha}{q}+\frac{m}{p^r})}.\nonumber
\\
&\geq \Big|\int_{S_0}\Omega(y)dy\Big|.\Big|\sum\limits_{\gamma\geq-r}\dfrac{\Phi(p^{-\gamma})}{p^{-\gamma(1-\beta-\frac{n+\alpha}{q}-\frac{m}{p^r})}}\Big|.|x|_p^{-(\beta+\frac{n+\alpha}{q}+\frac{m}{p^r})}\chi_{\mathbb Q^n_p\setminus B_{r-1}}(x).\nonumber
\end{align}
Therefore, for $r\in\mathbb Z$, it follows immediately that
\begin{align}
&\|{\mathcal H}^p_{\Phi,\Omega}(\vec f)\|_{K^{\beta,\ell}_{q,\omega}(\mathbb Q^n_p)}\geq \Big(\sum\limits_{k=r}^{\infty}p^{k\beta\ell}\|{\mathcal H}^p_{\Phi,\Omega}(\vec f)\chi_k\|_{L^q_\omega(\mathbb Q^n_p)}^{\ell}\Big)^{1/\ell}\nonumber
\\
&\,\,\,\,\,\,\,\,\,\,\,\geq \Big|\int_{S_0}\Omega(y)dy \Big|\sum\limits_{\gamma\geq -r}\dfrac{\Phi(p^{-\gamma})}{p^{-\gamma(1-\beta-\frac{n+\alpha}{q}-\frac{m}{p^r})}}.\Big(\sum\limits_{k=r}^{\infty}p^{k\beta\ell}\left\||x|_p^{-(\beta+\frac{n+\alpha}{q}+\frac{m}{p^r})}\chi_k\right\|_{L^q_\omega(\mathbb Q^n_p)}^{\ell}\Big)^{1/\ell}\nonumber
\\
&\,\,\,\,\,\,\,\,\,\,\,=:\Big|\int_{S_0}\Omega(y)dy \Big|\sum\limits_{\gamma\geq -r}\dfrac{\Phi(p^{-\gamma})}{p^{-\gamma(1-\beta-\frac{n+\alpha}{q}-\frac{m}{p^r})}}.\Big(\sum\limits_{k=r}^{\infty}p^{k\beta\ell}\left\|g_k\right\|_{L^q_\omega(\mathbb Q^n_p)}^{\ell}\Big)^{1/\ell}\nonumber.
\end{align}
Estimating as (\ref{fir}) above, it implies $\left\|g_k\right\|_{L^q_\omega(\mathbb Q^n_p)}\simeq p^{-k(\beta+\frac{m}{p^r})}$. Thus, 
\begin{align}
\|{\mathcal H}^p_{\Phi,\Omega}(\vec f)\|_{K^{\beta,\ell}_{q,\omega}(\mathbb Q^n_p)} &\gtrsim \Big|\int_{S_0}\Omega(y)dy \Big|\sum\limits_{\gamma\geq -r}\dfrac{\Phi(p^{-\gamma})}{p^{-\gamma(1-\beta-\frac{n+\alpha}{q}-\frac{m}{p^r})}}.\Big(\sum\limits_{k=r}^{\infty}p^\frac{-km\ell}{p^r}\Big)^{1/\ell}\nonumber
\\
&=\Big|\int_{S_0}\Omega(y)dy \Big|\sum\limits_{\gamma\geq -r}\dfrac{\Phi(p^{-\gamma})}{p^{-\gamma(1-\beta-\frac{n+\alpha}{q}-\frac{m}{p^r})}}\frac{p^{\frac{-rm}{p^r}}}{(1-p^{\frac{-m\ell}{p^r}})^{1/\ell}}.
\nonumber
\end{align}
Hence, by (\ref{estimatefir}), we have
$$
\|{\mathcal H}^p_{\Phi,\Omega}(\vec f)\|_{K^{\beta,\ell}_{q,\omega}(\mathbb Q^n_p)} \gtrsim \mathcal A(r).\Big|\int_{S_0}\Omega(y)dy \Big|\sum\limits_{\gamma\geq -r}\dfrac{\Phi(p^{-\gamma})}{p^{-\gamma(1-\beta-\frac{n+\alpha}{q}-\frac{m}{p^r})}}\prod\limits_{i=1}^{m}\|f_{i,r}\|_{K^{\beta_i,\ell_i}_{q_i,\omega_i}(\mathbb Q^n_p)},
$$
where $\mathcal A(r)=\frac{p^{\frac{-rm}{p^r}}\prod\limits_{i=1}^{m}(1-p^{\frac{-\ell_i}{p^r}})^{1/\ell_i}}{(1-p^{\frac{-m\ell}{p^r}})^{1/\ell}}.$ Note that, by lettting $r$ big enough, we get 
$$
p^{\frac{-\gamma m}{p^r}}\lesssim 1.
$$
By having $\frac{1}{\ell_1}+\cdots+\frac{1}{\ell_m}=\frac{1}{\ell}$, and lettting $r\to +\infty$, we obtain
$$
\mathop{\rm lim}\limits_{r\to +\infty}\mathcal A(r) = a>0.
$$
Hence, by the dominated convergence theorem of Lebesgue and the boundedness of ${\mathcal H}^p_{\Phi,\Omega}$ on  ${K}^{\beta_1,\ell_1}_{q_1,\omega_1}(\mathbb Q^n_p)\times\cdots \times {K}^{{\beta_m, \ell_m}}_{q_m,\omega_m}(\mathbb Q^n_p)$ to ${K}^{\beta,\ell}_{q, \omega}(\mathbb Q^n_p)$, we get
$$
\sum\limits_{\gamma\in\mathbb Z}\dfrac{\Phi(p^{-\gamma})}{p^{-\gamma(1-\beta-\frac{n+\alpha}{q})}}<  + \infty,
$$
which finishes the proof of this theorem.
\end{proof}
\begin{theorem}\label{TheoremHerz1}
Let $1\leq q^*<\infty$, $\beta^*\in\mathbb R$, $\beta_1,...,\beta_m\in\mathbb R^{-}$ (set of all negative real numbers) such that 
$$\dfrac{1}{q^*}+\dfrac{\beta^*}{n}=\dfrac{1}{q}+\dfrac{\beta}{n}.
$$
Suppose that $\omega\in A_{\zeta}$, $1\leq \zeta<\infty$, with the finite critical index $r_{\omega}$ for the reverse H\"{o}lder condition and $q>q^*\zeta r_{\omega}/(r_{\omega}-1)$, $\Omega\in L^{(\frac{q}{\zeta})'}(S_0),\delta\in (1,r_\omega)$.
\\
$(\rm i)$ If $\dfrac{1}{q_i}+\dfrac{\alpha_i}{n}\geq 0$, for $i=1,...,m$, and 
$$\mathcal C_{4.1}= \sum\limits_{\gamma\geq 0}\dfrac{\Phi(p^{\gamma})}{p^{\gamma(1-\frac{\zeta n}{q}-\zeta\beta)}}+ \sum\limits_{\gamma< 0}\dfrac{\Phi(p^{\gamma})}{p^{\gamma(1-n\frac{(\delta-1)}{\delta}(\frac{1}{q}+\frac{\beta}{n}))}}<\infty,
$$
then ${\mathcal H}^p_{\Phi,\Omega}$ is bounded from ${\mathop{K}\limits^.}^{\beta_1,\ell_1}_{q_1,\omega}(\mathbb Q^n_p)\times\cdots \times {\mathop{K}\limits^.}^{\beta_m,\ell_m}_{q_m,\omega}(\mathbb Q^n_p)$ to ${\mathop{K}\limits^.}^{\beta^*,\ell}_{q^*,\omega}(\mathbb Q^n_p)$.

$\,(\rm ii)$ If $\dfrac{1}{q_i}+\dfrac{\alpha_i}{n}< 0$, for $i=1,...,m$, and 
$$\mathcal C_{4.2}= \sum\limits_{\gamma< 0}\dfrac{\Phi(p^{\gamma})}{p^{\gamma(1-\frac{\zeta n}{q}+\zeta\beta)}}+ \sum\limits_{\gamma\geq  0}\dfrac{\Phi(p^{\gamma})}{p^{\gamma(1-n\frac{(\delta-1)}{\delta}(\frac{1}{q}+\frac{\beta}{n}))}}<\infty,
$$
then ${\mathcal H}^p_{\Phi,\Omega}$ is bounded from ${\mathop{K}\limits^.}^{\beta_1,\ell_1}_{q_1,\omega}(\mathbb Q^n_p)\times\cdots \times {\mathop{K}\limits^.}^{\beta_m,\ell_m}_{q_m,\omega}(\mathbb Q^n_p)$ to ${\mathop{K}\limits^.}^{\beta^*,\ell}_{q^*,\omega}(\mathbb Q^n_p).$
\end{theorem}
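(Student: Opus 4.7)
The plan is to combine the $A_\zeta$-Muckenhoupt technique used in the proof of Theorem \ref{TheoremMorrey1} with the sphere-by-sphere Herz decomposition of Theorem \ref{TheoremHerz}. Both cases (i) and (ii) share an identical initial reduction; the case split enters only at the final step through Proposition \ref{pro2.3DFan}.

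First, I fix $k\in\mathbb Z$ and estimate $\|\mathcal H^p_{\Phi,\Omega}(\vec f)\chi_k\|_{L^{q^*}_\omega(\mathbb Q_p^n)}$. Minkowski's inequality moves $\sum_\gamma \int_{S_0}$ outside the $L^{q^*}_\omega(S_k)$ norm. Since $q>q^*\zeta r_\omega/(r_\omega-1)$, I pick $r\in(1,r_\omega)$ with $q^*r'=q/\zeta$; Hölder with exponents $(r',r)$ on the weighted integral followed by the reverse Hölder inequality applied on $B_k\supset S_k$ produces the factor $|B_k|^{-\zeta/q}\omega(B_k)^{1/q^*}$ exactly as in (\ref{reverMor1}). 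Hölder on $S_0\times S_k$ with exponent $(q/\zeta)'$ extracts $\|\Omega\|_{L^{(q/\zeta)'}(S_0)}$, and multilinear Hölder separates the $m$ factors as in (\ref{Holderqi}). The $p$-adic polar change of variable (as in (\ref{S0Sr})) gives
\[
\int_{S_0}\!\!\int_{S_k}|f_i(p^\gamma|x|_p^{-1}y)|^{q_i/\zeta}\,dx\,dy\simeq p^{\gamma n}\|f_i\chi_{k-\gamma}\|_{L^{q_i/\zeta}}^{q_i/\zeta},
\]
and Proposition \ref{pro2.4DFan} applied to $|f_i\chi_{k-\gamma}|^{q_i/\zeta}$ on $B_{k-\gamma}$ with $\ell=\zeta$ converts to the weighted norm, producing $|B_{k-\gamma}|^{\zeta/q_i}\omega(B_{k-\gamma})^{-1/q_i}$. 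Collecting everything, the powers of $|B_k|$ and $|B_{k-\gamma}|$ cancel via $\sum 1/q_i=1/q$, leaving
\[
\|\mathcal H^p_{\Phi,\Omega}(\vec f)\chi_k\|_{L^{q^*}_\omega}\lesssim\|\Omega\|_{L^{(q/\zeta)'}(S_0)}\sum_{\gamma\in\mathbb Z}\frac{\Phi(p^\gamma)}{p^\gamma}\cdot\frac{\omega(B_k)^{1/q^*}}{\omega(B_{k-\gamma})^{1/q}}\prod_{i=1}^m\|f_i\chi_{k-\gamma}\|_{L^{q_i}_\omega}.
\]

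Next, I multiply by $\omega(B_k)^{\beta^*/n}$, take the $\ell$-th power, sum over $k$, and use Minkowski's inequality for sequences to move $\sum_\gamma$ outside the $\ell$-norm. After the change of index $t=k-\gamma$ and setting $\sigma:=\beta^*/n+1/q^*=\beta/n+1/q$, the weight factor is rewritten as
\[
\omega(B_{t+\gamma})^{\sigma\ell}\,\omega(B_t)^{-\ell/q}=\omega(B_t)^{\beta\ell/n}\,\bigl(\omega(B_{t+\gamma})/\omega(B_t)\bigr)^{\sigma\ell}.
\]
The factor $\omega(B_t)^{\beta\ell/n}$ combines with $\prod_i\|f_i\chi_t\|_{L^{q_i}_\omega}^\ell$, and Hölder's inequality in $\ell$ using $\sum 1/\ell_i=1/\ell$ and $\sum\beta_i=\beta$ sums over $t$ to yield $\prod_i\|f_i\|_{{\mathop K\limits^.}^{\beta_i,\ell_i}_{q_i,\omega}}^\ell$.

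Finally, Proposition \ref{pro2.3DFan} bounds the ratio $\omega(B_{t+\gamma})/\omega(B_t)$ uniformly in $t$ by a function of $\gamma$: for $\gamma\geq 0$, $B_t\subset B_{t+\gamma}$ yields $p^{\gamma n(\delta-1)/\delta}\lesssim \omega(B_{t+\gamma})/\omega(B_t)\lesssim p^{\gamma n\zeta}$; for $\gamma<0$ the inclusion reverses and so do the bounds. In case (i) the hypothesis forces $\sigma\geq 0$, so raising to $\sigma$ is monotone increasing and the upper bound is taken; this gives $p^{\gamma n\zeta\sigma}$ for $\gamma\geq 0$ and $p^{\gamma n(\delta-1)\sigma/\delta}$ for $\gamma<0$, producing exactly $\mathcal C_{4.1}$. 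In case (ii), $\sigma<0$ reverses monotonicity and one uses the lower bounds instead, yielding $\mathcal C_{4.2}$. The main obstacle is this four-way case analysis: tracking the correct Proposition \ref{pro2.3DFan} inequality for each combination of signs so that the exponent on $p^\gamma$ matches exactly the one defining $\mathcal C_{4.1}$ or $\mathcal C_{4.2}$; once this is done, the bound $\|\mathcal H^p_{\Phi,\Omega}(\vec f)\|_{{\mathop K\limits^.}^{\beta^*,\ell}_{q^*,\omega}}\lesssim \mathcal C_{4.i}\,\|\Omega\|_{L^{(q/\zeta)'}(S_0)}\prod_{i=1}^m\|f_i\|_{{\mathop K\limits^.}^{\beta_i,\ell_i}_{q_i,\omega}}$ follows.
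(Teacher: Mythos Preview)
Your argument is correct and in fact cleaner than the paper's. The key divergence is at the very first estimate: the paper bounds $\|\mathcal H^p_{\Phi,\Omega}(\vec f)\chi_k\|_{L^{q^*}_\omega}$ by the corresponding norm on the full ball $B_k$ and then invokes (\ref{HfMor2}) verbatim, which outputs the \emph{ball} norms $\|f_i\|_{L^{q_i}_\omega(B_{k-\gamma})}$. To return to the Herz structure the paper must then re-expand each ball norm as $\sum_{\eta\le -\gamma}\|f_i\chi_{\eta+k}\|_{L^{q_i}_\omega}$, which produces the extra inner weight ratio $(\omega(B_{-\gamma+k})/\omega(B_{\eta+k}))^{\beta_i/n}$ handled by (\ref{omegaHez11}); summing that geometric series in $\eta$ is precisely where the hypothesis $\beta_i<0$ is consumed. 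You instead carry the sphere $S_k$ through the reverse H\"older step (legitimately, via $\int_{S_k}\omega^r\le\int_{B_k}\omega^r$) and through the polar change of variable, landing directly on $\|f_i\chi_{k-\gamma}\|_{L^{q_i}_\omega}$; this eliminates the $\eta$-sum and (\ref{omegaHez11}) altogether and leaves only the single ratio $(\omega(B_{t+\gamma})/\omega(B_t))^{\sigma}$ to bound.

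Two small points of presentation. First, your last two steps should be swapped in the write-up: the uniform-in-$t$ bound on $(\omega(B_{t+\gamma})/\omega(B_t))^{\sigma}$ from Proposition \ref{pro2.3DFan} must be applied \emph{before} the H\"older step in $t$, so that what remains is exactly $\sum_t\omega(B_t)^{\beta\ell/n}\prod_i\|f_i\chi_t\|_{L^{q_i}_\omega}^\ell$. Second, note that your route only uses the sign of the aggregate $\sigma=\frac{1}{q}+\frac{\beta}{n}$, whereas the paper's route genuinely needs each $\beta_i<0$ (for the convergence of the $\eta$-series) and each $\frac{1}{q_i}+\frac{\beta_i}{n}$ of fixed sign (for (\ref{omegaHez12})--(\ref{omegaHerz13})); so your argument actually proves a slightly stronger statement.
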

\begin{proof}
According to estimating as (\ref{HfMor2}), we also have
\begin{align}
 \|{\mathcal H}^p_{\Phi,\Omega}(\vec f)\chi_k\|_{L_\omega^{q^*}(\mathbb Q^n_p)}&\lesssim \|\Omega\|_{L^{(\frac{q}{\zeta})'}(S_0)}\sum\limits_{\gamma\in\mathbb Z}\dfrac{\Phi(p^{\gamma})}{p^{\gamma}}.\dfrac{\omega^{1/q^*}(B_k)}{\omega^{1/q}(B_{-\gamma+k})}\prod\limits_{i=1}^m\|f_i\|_{L^{q_i}_\omega(B_{-\gamma+k})}\nonumber.
\end{align}
From this, by the Minkowski inequality, $\|{\mathcal H}^p_{\Phi,\Omega}(\vec f)\|_{{\mathop{K}\limits^.}^{\beta^*,\ell}_{q^*,\omega}(\mathbb Q^n_p)}$ is controlled by
\begin{align}\label{HfHerz12}
&\|\Omega\|_{L^{(\frac{q}{\zeta})'}(S_0)}\Big(\sum\limits_{k=-\infty}^{\infty}\Big(\sum\limits_{\gamma\in\mathbb Z}\dfrac{\Phi(p^{\gamma})}{p^{\gamma}}.\dfrac{\omega^{\frac{1}{q^*}+\frac{\beta^*}{n}}(B_k)}{\omega^{\frac{1}{q}}(B_{-\gamma+k})}\prod\limits_{i=1}^m\|f_i\|_{L^{q_i}_\omega(B_{-\gamma+k})}\Big)^\ell\Big)^{1/\ell}
\nonumber
\\
&\lesssim \|\Omega\|_{L^{(\frac{q}{\zeta})'}(S_0)}\sum\limits_{\gamma\in\mathbb Z}\dfrac{\Phi(p^{\gamma})}{p^{\gamma}}\Big(\sum\limits_{k=-\infty}^{\infty}\Big(\dfrac{\omega^{\frac{1}{q^*}+\frac{\beta^*}{n}}(B_k)}{\omega^{\frac{1}{q}}(B_{-\gamma+k})}\prod\limits_{i=1}^m\|f_i\|_{L^{q_i}_\omega(B_{-\gamma+k})}\Big)^{\ell}\Big)^{1/\ell}.
\end{align}
By the relation $\frac{1}{q^*}+\frac{\beta^*}{n}= \frac{1}{q}+\frac{\beta}{n}$ and the H\"{o}lder inequality,  we get
\begin{align}
&\Big(\sum\limits_{k=-\infty}^{\infty}\Big(\dfrac{\omega^{\frac{1}{q^*}+\frac{\beta^*}{n}}(B_k)}{\omega^{\frac{1}{q}}(B_{-\gamma+k})}\prod\limits_{i=1}^m\|f_i\|_{L^{q_i}_\omega(B_{-\gamma+k})}\Big)^{\ell}\Big)^{1/\ell}\nonumber
\\
&\,\,\,\,\,\,\,\,\,\,\,\,\,\,\,\,\,\,\,\,\leq \prod\limits_{i=1}^m\Big(\sum\limits_{k=-\infty}^{\infty}\Big(\dfrac{\omega^{\frac{1}{q_i}+\frac{\beta_i}{n}}(B_k)}{\omega^{\frac{1}{q_i}}(B_{-\gamma+k})}\|f_i\|_{L^{q_i}_\omega(B_{-\gamma+k})}\Big)^{\ell_i}\Big)^{1/\ell_i}.\nonumber
\end{align}
Thus, by (\ref{HfHerz12}), it follows that
\begin{align}\label{HfHerz13}
&\|{\mathcal H}^p_{\Phi,\Omega}(\vec f)\|_{{\mathop{K}\limits^.}^{\beta^*,\ell}_{q^*,\omega}(\mathbb Q^n_p)}\lesssim \|\Omega\|_{L^{(\frac{q}{\zeta})'}(S_0)}\sum\limits_{\gamma\in\mathbb Z}\dfrac{\Phi(p^{\gamma})}{p^{\gamma}}\prod\limits_{i=1}^m\Big(\sum\limits_{k=-\infty}^{\infty}\Big(\dfrac{\omega^{\frac{1}{q_i}+\frac{\beta_i}{n}}(B_k)}{\omega^{\frac{1}{q_i}}(B_{-\gamma+k})}\|f_i\|_{L^{q_i}_\omega(B_{-\gamma+k})}\Big)^{\ell_i}\Big)^{1/\ell_i}\nonumber
\\
&\lesssim \|\Omega\|_{L^{(\frac{q}{\zeta})'}(S_0)}\sum\limits_{\gamma\in\mathbb Z}\dfrac{\Phi(p^{\gamma})}{p^{\gamma}}\times\nonumber
\\
&\times\prod\limits_{i=1}^m\Big(\sum\limits_{k=-\infty}^{\infty}\Big(\dfrac{\omega^{\frac{1}{q_i}+\frac{\beta_i}{n}}(B_k)}{\omega^{\frac{1}{q_i}+\frac{\beta_i}{n}}(B_{-\gamma+k})}\sum\limits_{\eta=-\infty}^{-\gamma}\Big(\dfrac{\omega(B_{-\gamma+k})}{\omega(B_{\eta+k})}\Big)^{\frac{\beta_i}{n}}\omega^{\frac{\beta_i}{n}}(B_{\eta+k})\|f_i\chi_{\eta+k}\|_{L^{q_i}_\omega(\mathbb Q^n_p)}\Big)^{\ell_i}\Big)^{1/\ell_i}.
\end{align}
From $\beta_i<0$ and $\eta\leq-\gamma$, by Proposition \ref{pro2.3DFan} again, we see that
\begin{align}\label{omegaHez11}
\Big(\dfrac{\omega(B_{-\gamma+k})}{\omega(B_{\eta+k})}\Big)^{\frac{\beta_i}{n}}\lesssim \Big(\dfrac{|B_{-\gamma+k}|}{|B_{\eta +k}|}\Big)^{\frac{\beta_i(\delta-1)}{n\delta}}\lesssim p^{(-\gamma-\eta)\beta_i(\delta-1)/\delta}.
\end{align}
When $\dfrac{1}{q_i}+\dfrac{\beta_i}{n}\geq 0$, Proposition \ref{pro2.3DFan} follows us to have
\begin{align}\label{omegaHez12}
\Big(\dfrac{\omega(B_k)}{\omega(B_{-\gamma+k})}\Big)^{\frac{1}{q_i}+\frac{\beta_i}{n}} \lesssim \left\{ \begin{array}{l}
\Big(\dfrac{|B_k|}{|B_{-\gamma+k}|}\Big)^{\zeta(\frac{1}{q_i}+\frac{\beta_i}{n})}\lesssim p^{\gamma n\zeta(\frac{1}{q_i}+\frac{\beta_i}{n})},\,\,\,\,\,\,\,\,\,\,\,\,\,\textit{\rm if}\,\gamma\geq 0,
\\
\\
\Big(\dfrac{|B_k|}{|B_{-\gamma+k}|}\Big)^{\frac{\delta-1}{\delta}(\frac{1}{q_i}+\frac{\beta_i}{n})}\lesssim p^{\gamma n\frac{\delta-1}{\delta}(\frac{1}{q_i}+\frac{\beta_i}{n})},\,\,\textit{\rm otherwise}.
\end{array} \right.
\end{align}
When $\dfrac{1}{q_i}+\dfrac{\beta_i}{n}< 0$, it is easy to see that Proposition \ref{pro2.3DFan} yields 
\begin{align}\label{omegaHerz13}
\Big(\dfrac{\omega(B_k)}{\omega(B_{-\gamma+k})}\Big)^{\frac{1}{q_i}+\frac{\beta_i}{n}} \lesssim \left\{ \begin{array}{l}
\Big(\dfrac{|B_k|}{|B_{-\gamma+k}|}\Big)^{\zeta(\frac{1}{q_i}+\frac{\beta_i}{n})}\lesssim p^{\gamma n\zeta(\frac{1}{q_i}+\frac{\beta_i}{n})},\,\,\,\,\,\,\,\,\,\,\,\,\,\textit{\rm if}\,\gamma < 0,
\\
\\
\Big(\dfrac{|B_k|}{|B_{-\gamma+k}|}\Big)^{\frac{\delta-1}{\delta}(\frac{1}{q_i}+\frac{\beta_i}{n})}\lesssim p^{\gamma n\frac{\delta-1}{\delta}(\frac{1}{q_i}+\frac{\beta_i}{n})},\,\,\textit{\rm otherwise}.
\end{array} \right.
\end{align}
In the case $(i)$, by (\ref{HfHerz13}), (\ref{omegaHez11}) and (\ref{omegaHez12}), we obtain that
\begin{align}\label{HfHerz14}
&\|{\mathcal H}^p_{\Phi,\Omega}(\vec f)\|_{{\mathop{K}\limits^.}^{\beta^*,\ell}_{q^*,\omega}(\mathbb Q^n_p)}\nonumber
\lesssim \|\Omega\|_{L^{(\frac{q}{\zeta})'}(S_0)}\sum\limits_{\gamma\geq 0}\dfrac{\Phi(p^{\gamma})}{p^{\gamma(1-\frac{\zeta n}{q}-\zeta\beta)}}\times\nonumber
\\
&\,\,\,\times\prod\limits_{i=1}^m\Big(\sum\limits_{k=-\infty}^{\infty}\Big(\sum\limits_{\eta=-\infty}^{-\gamma}p^{(-\gamma-\eta)\beta_i(\delta-1)/\delta}\omega^{\frac{\beta_i}{n}}(B_{\eta+k})\|f_i\chi_{\eta+k}\|_{L^{q_i}_\omega(\mathbb Q^n_p)}\Big)^{\ell_i}\Big)^{1/\ell_i}\nonumber
\\
&\,\,\,+\|\Omega\|_{L^{(\frac{q}{\zeta})'}(S_0)}\sum\limits_{\gamma< 0}\dfrac{\Phi(p^{\gamma})}{p^{\gamma(1-n\frac{(\delta-1)}{\delta}(\frac{1}{q}+\frac{\beta}{n}))}}\times\nonumber
\\
&\,\,\,\times\prod\limits_{i=1}^m\Big(\sum\limits_{k=-\infty}^{\infty}\Big(\sum\limits_{\eta=-\infty}^{-\gamma}p^{(-\gamma-\eta)\beta_i(\delta-1)/\delta}\omega^{\frac{\beta_i}{n}}(B_{\eta+k})\|f_i\chi_{\eta+k}\|_{L^{q_i}_\omega(\mathbb Q^n_p)}\Big)^{\ell_i}\Big)^{1/\ell_i}.
\end{align} 
Note that, by using the Minkowski inequality again, we deduce
\begin{align}
&\Big(\sum\limits_{k=-\infty}^{\infty}\Big(\sum\limits_{\eta=-\infty}^{-\gamma}p^{(-\gamma-\eta)\beta_i(\delta-1)/\delta}\omega^{\frac{\beta_i}{n}}(B_{\eta+k})\|f_i\chi_{\eta+k}\|_{L^{q_i}_\omega(\mathbb Q^n_p)}\Big)^{\ell_i}\Big)^{1/\ell_i}
\nonumber
\\
&\leq \sum\limits_{\eta=-\infty}^{-\gamma}p^{(-\gamma-\eta)\beta_i(\delta-1)/\delta}\Big(\sum\limits_{k=-\infty}^{\infty}\Big(\omega^{\frac{\beta_i}{n}}(B_{\eta+k})\|f_i\chi_{\eta+k}\|_{L^{q_i}_\omega(\mathbb Q^n_p)}\Big)^{\ell_i}\Big)^{1/\ell_i}
\nonumber 
\\
&\leq \sum\limits_{\eta=-\infty}^{-\gamma}p^{(-\gamma-\eta)\beta_i(\delta-1)/\delta}\|f_i\|_{{\mathop{K}\limits^.}^{\beta_i,\ell_i}_{q_i,\omega}(\mathbb Q^n_p)}\lesssim \|f_i\|_{{\mathop{K}\limits^.}^{\beta_i,\ell_i}_{q_i,\omega}(\mathbb Q^n_p)}.
\nonumber
\end{align}
Therefore, by (\ref{HfHerz14}), we have
\begin{align}
\|{\mathcal H}^p_{\Phi,\Omega}(\vec f)\|_{{\mathop{K}\limits^.}^{\beta^*,\ell}_{q^*,\omega}(\mathbb Q^n_p)}\nonumber
\lesssim \mathcal C_{4.1}.\|\Omega\|_{L^{(\frac{q}{\zeta})'}(S_0)}\prod\limits_{i=1}^m  \|f_i\|_{{\mathop{K}\limits^.}^{\beta_i,\ell_i}_{q_i,\omega}(\mathbb Q^n_p)},
\end{align} 
which completes the proof for this case.
\\
In this case $(\rm ii)$, from by (\ref{HfHerz13}), (\ref{omegaHez11}), (\ref{omegaHerz13}) and a similar argument as above, we also have
\begin{align}
\|{\mathcal H}^p_{\Phi,\Omega}(\vec f)\|_{{\mathop{K}\limits^.}^{\beta^*,\ell}_{q^*,\omega}(\mathbb Q^n_p)}\nonumber
\lesssim \mathcal C_{4.2}.\|\Omega\|_{L^{(\frac{q}{\zeta})'}(S_0)}\prod\limits_{i=1}^m  \|f_i\|_{{\mathop{K}\limits^.}^{\beta_i,\ell_i}_{q_i,\omega}(\mathbb Q^n_p)}.
\end{align} 
This implies that the proof of theorem is finished.
\end{proof}
\begin{theorem}\label{TheoremMorreyHerz}
Let $\omega_i,\omega, \Omega$ be as Theorem \ref{TheoremHerz}, $\lambda_1, ..., \lambda_m >0$ and  the hypothesis (\ref{lambdai*}) in Theorem \ref{TheoremMorrey1} holds.
Then, ${\mathcal H}^p_{\Phi,\Omega}$ is  a bounded operator from ${MK}^{\beta_1,\lambda_1}_{\ell_1, q_1,\omega_1}(\mathbb Q^n_p)\times\cdots \times {MK}^{{\beta_m, \lambda_m}}_{\ell_m, q_m,\omega_m}(\mathbb Q^n_p)$ to ${MK}^{\beta,\lambda^*}_{\ell,q, \omega}(\mathbb Q^n_p)$ if and only if 
$$
\mathcal C_{5}= \sum\limits_{\gamma\in\mathbb Z} {\dfrac{\Phi(p^\gamma)}{p^{\gamma(1-\beta-\frac{n+\alpha}{q}+\lambda^*)}}} <  + \infty.
$$
Moreover, ${\big\|{\mathcal H}^p_{\Phi,\Omega}\big\|_{{MK}^{\beta_1,\lambda_1}_{\ell_1, q_1,\omega_1}(\mathbb Q^n_p)\times\cdots \times {MK}^{{\beta_m, \lambda_m}}_{\ell_m, q_m,\omega_m}(\mathbb Q^n_p)\to {MK}^{\beta,\lambda^*}_{\ell,q, \omega}(\mathbb Q^n_p)}\simeq \mathcal C_{5}}.$
\end{theorem}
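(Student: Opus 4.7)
My plan follows the template of Theorem \ref{TheoremHerz}, with the Herz $\ell^\ell$-sum replaced by the Morrey-Herz sup-of-truncated-sum; the auxiliary parameter $k_0$ will need to be tracked throughout. The crucial observation is that the pointwise bound (\ref{HfChik}) already encodes the correct $L^q_\omega(S_k)$-control of ${\mathcal H}^p_{\Phi,\Omega}(\vec f)$, so only the outer layer of the norm needs to be redone.

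For the sufficient direction I would start from (\ref{HfChik}) and insert it into
\[
p^{-k_0\lambda^*}\Bigl(\sum_{k\leq k_0} p^{k\beta\ell}\|{\mathcal H}^p_{\Phi,\Omega}(\vec f)\chi_k\|^\ell_{L^q_\omega(\mathbb Q_p^n)}\Bigr)^{1/\ell},
\]
then pull the $\gamma$-sum outside by Minkowski's inequality on the discrete $\ell^\ell$-norm in $k$. Applying H\"older with exponents $\ell_i/\ell$ (legitimate because $\sum 1/\ell_i = 1/\ell$), together with $\sum \beta_i = \beta$, decomposes the inner quantity as
\[
\prod_{i=1}^m\Bigl(\sum_{k\leq k_0} p^{k\beta_i\ell_i}\|f_i\chi_{k-\gamma}\|^{\ell_i}_{L^{q_i}_{\omega_i}}\Bigr)^{1/\ell_i}.
\]
The substitution $j = k-\gamma$ extracts a factor $p^{\gamma\beta_i}$ and shifts the range to $j\leq k_0-\gamma$; by the very definition of the Morrey-Herz norm, each factor is dominated by $p^{\gamma\beta_i}p^{(k_0-\gamma)\lambda_i}\|f_i\|_{MK^{\beta_i,\lambda_i}_{\ell_i,q_i,\omega_i}}$. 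Multiplying over $i$ and using $\sum\lambda_i=\lambda^*$, the factor $p^{k_0\lambda^*}$ will cancel the $p^{-k_0\lambda^*}$ out front, and the remaining $\gamma$-factor will consolidate into the series defining $\mathcal C_5$.

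For the necessary direction I would test with the power functions $f_i(x) = |x|_p^{\lambda_i - \beta_i - (n+\alpha_i)/q_i}$. A direct computation using $\omega_i(x)=|x|_p^{\alpha_i}$ and $|S_k|\simeq p^{kn}$ gives $\|f_i\chi_k\|_{L^{q_i}_{\omega_i}}\simeq p^{k(\lambda_i-\beta_i)}$, whence $p^{k\beta_i\ell_i}\|f_i\chi_k\|^{\ell_i}\simeq p^{k\lambda_i\ell_i}$. Since $\lambda_i>0$, the truncated geometric sum $\sum_{k\leq k_0}p^{k\lambda_i\ell_i}\simeq p^{k_0\lambda_i\ell_i}$, which combined with $p^{-k_0\lambda_i}$ gives $\|f_i\|_{MK^{\beta_i,\lambda_i}_{\ell_i,q_i,\omega_i}}\simeq 1$. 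Using the identity $|p^\gamma|x|_p^{-1}y|_p = p^{-\gamma}|x|_p$ for $y\in S_0$, direct substitution yields
\[
{\mathcal H}^p_{\Phi,\Omega}(\vec f)(x) = |x|_p^{\lambda^* - \beta - (n+\alpha)/q}\cdot \mathcal C_5\cdot \int_{S_0}\Omega(y)\,dy,
\]
and the same template calculation shows $\|{\mathcal H}^p_{\Phi,\Omega}(\vec f)\|_{MK^{\beta,\lambda^*}_{\ell,q,\omega}}\simeq \bigl|\int_{S_0}\Omega\bigr|\cdot \mathcal C_5$. The assumed boundedness of ${\mathcal H}^p_{\Phi,\Omega}$, combined with the non-triviality of $\Omega$ (argued as in Theorem \ref{TheoremHerz}), then forces $\mathcal C_5<\infty$.

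The main obstacle I expect is bookkeeping: the three balance relations $\sum\beta_i = \beta$, $\sum 1/\ell_i = 1/\ell$, and $\sum\lambda_i=\lambda^*$ must conspire to cancel all $k_0$-dependence and collapse the $\gamma$-exponent into exactly $1-\beta-(n+\alpha)/q+\lambda^*$, producing the series $\mathcal C_5$. This is the same mechanism that worked in Theorems \ref{TheoremMorrey} and \ref{TheoremHerz}, so I anticipate no conceptual hurdles beyond careful index-tracking.
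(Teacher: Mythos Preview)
Your proposal is correct and follows essentially the same route as the paper: start from the shell estimate (\ref{HfChik}), pull the $\gamma$-sum outside via Minkowski, split the product with H\"older using $\sum 1/\ell_i=1/\ell$ and $\sum\beta_i=\beta$, shift $k\mapsto k-\gamma$, and invoke the Morrey--Herz norm together with $\sum\lambda_i=\lambda^*$ to cancel the $k_0$-dependence and produce $\mathcal C_5$; for necessity both you and the paper test against the same power functions $f_i(x)=|x|_p^{\lambda_i-\beta_i-(n+\alpha_i)/q_i}$ and compute directly. One small caveat: your parenthetical ``the non-triviality of $\Omega$ (argued as in Theorem~\ref{TheoremHerz})'' overstates what the paper does---neither proof actually justifies $\int_{S_0}\Omega\neq 0$, so this is a shared implicit assumption rather than something established earlier.
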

\begin{proof}
By the estimation (\ref{HfChik}) and the definition of the Morrey-Herz $p$-adic space, we have
\begin{align}
&\big\|{\mathcal H}^p_{\Phi,\Omega}(\vec f)\big\|_{{MK}^{\beta,\lambda^*}_{\ell, q,\omega}(\mathbb Q^n_p)}=\mathop{\rm sup}\limits_{k_0\in\mathbb Z}p^{-k_0\lambda^*}\Big(\sum\limits_{k=-\infty}^{k_0}p^{k\beta\ell}\|{\mathcal H}^p_{\Phi,\Omega}(\vec f)\chi_k\|_{L^q_\omega(\mathbb Q^n_p)}^{\ell}\Big)^{1/\ell}\nonumber
\\
&\,\,\,\,\lesssim\|\Omega\|_{L^{q'}(S_0)}\sum\limits_{\gamma\in\mathbb Z}\dfrac{\Phi(p^{\gamma})}{p^{\gamma(1-\frac{n+\alpha}{q})}}\mathop{\rm sup}\limits_{k_0\in\mathbb Z}p^{k_0\lambda^*}\Big(\sum\limits_{k=-\infty}^{k_0}p^{k\beta\ell}\prod\limits_{i=1}^{m}\|f_i\chi_{-\gamma+k}\|^\ell_{L^{q_i}_{\omega_i}(\mathbb Q_p^n)}\Big)^{1/\ell}.
\nonumber
\end{align}
Using the H\"{o}lder inequality and  the relation (\ref{lambdai*}), we get
\begin{align}\label{MorreyHerz3.5}
&p^{-k_0\lambda^*}\Big(\sum\limits_{k=-\infty}^{k_0}p^{k\beta\ell}\prod\limits_{i=1}^{m}\|f_i\chi_{-\gamma+k}\|^\ell_{L^{q_i}_{\omega_i}(\mathbb Q_p^n)}\Big)^{1/\ell}\leq p^{-k_0\lambda^*}\prod\limits_{i=1}^m\Big(\sum\limits_{k=-\infty}^{k_0}p^{k\beta_i\ell_i}\|f_i\chi_{-\gamma+k}\|^{\ell_i}_{L^{q_i}_{\omega_i}(\mathbb Q_p^n)}\Big)^{1/\ell_i}\nonumber
\\
&= p^{\gamma\beta}\prod\limits_{i=1}^mp^{-\gamma\lambda_i}p^{-(k_0-\gamma)\lambda_i}\Big(\sum\limits_{k=-\infty}^{k_0-\gamma}p^{k\beta_i\ell_i}\|f_i\chi_{k}\|^{\ell_i}_{L^{q_i}_{\omega_i}(\mathbb Q_p^n)}\Big)^{1/\ell_i}= p^{\gamma(\beta-\lambda^*)}\prod\limits_{i=1}^m \|f_i\|_{{MK}^{\beta_i,\lambda_i}_{\ell_i, q_i,\omega_i}(\mathbb Q^n_p)}.
\end{align}
Therefore, we obtain
$$
\big\|{\mathcal H}^p_{\Phi,\Omega}(\vec f)\big\|_{{MK}^{\beta,\lambda^*}_{\ell, q,\omega}(\mathbb Q^n_p)}\lesssim \mathcal C_5.\|\Omega\|_{L^{q'}(S_0)}\prod\limits_{i=1}^m \|f_i\|_{{MK}^{\beta_i,\lambda_i}_{\ell_i, q_i,\omega_i}(\mathbb Q^n_p)},
$$
which implies that ${\mathcal H}^p_{\Phi,\Omega}$ is  a bounded operator from ${MK}^{\beta_1,\lambda_1}_{\ell_1, q_1,\omega_1}(\mathbb Q^n_p)\times\cdots \times {MK}^{{\beta_m, \lambda_m}}_{\ell_m, q_m,\omega_m}(\mathbb Q^n_p)$ to ${MK}^{\beta,\lambda^*}_{\ell,q, \omega}(\mathbb Q^n_p)$ if $\mathcal C_5<\infty.$
\\

Conversely, by a similar argument,  we also choose
$$
f_i(x)=|x|_p^{-\beta_i-\frac{n+\alpha_i}{q_i}+\lambda_i}, \textit{ \rm for all}\, i=1,...,m.
$$
Thus, $\|f_i\chi_k\|^{q_i}_{L^{q_i}_{\omega_i}(\mathbb Q^n_p)}=\int_{S_k}|x|_p^{-\beta_iq_i-n+\lambda_iq_i}dx\simeq p^{kq_i(\lambda_i-\beta_i)}$, which implies that
\begin{align}
0<\|f_i\|_{{MK}^{\beta_i,\lambda_i}_{\ell_i,q_i,\omega_i}(\mathbb Q^n_p)}&=\mathop{\rm sup}\limits_{k_0\in\mathbb Z} p^{-k_0\lambda_i}\Big(\sum\limits_{k=-\infty}^{k_0}p^{k\beta_i\ell_i}\|f_i\chi_k\|_{L^{q_i}_{\omega_i}(\mathbb Q^n_p)}^{\ell_i}\Big)^{1/\ell_i}
\\
&\simeq\mathop{\rm sup}\limits_{k_0\in\mathbb Z} p^{-k_0\lambda_i}\Big(\sum\limits_{k=-\infty}^{k_0}p^{k\lambda_i\ell_i}\Big)^{1/\ell_i}=\dfrac{1}{(1-p^{-\lambda_i\ell_i})^{1/\ell_i}}<\infty.
\nonumber
\end{align}
In addition, it is easy to show that
$$
\mathcal H^p_{\Phi,\Omega}(\vec f)(x)=\Big|\int_{S_0}\Omega(y)dy\Big|\mathcal C_5.|x|_p^{-\beta-\frac{n+\alpha}{q}+\lambda^*}.
$$
Hence, we estimate
\begin{align}
\|\mathcal H^p_{\Phi,\Omega}(\vec f)\|_{{MK}^{\beta,\lambda^*}_{\ell,q,\omega}(\mathbb Q^n_p)}&= \mathcal C_5.\Big|\int_{S_0}\Omega(y)dy\Big|\||x|_p^{-\beta-\frac{n+\alpha}{q}+\lambda^*}\|_{{MK}^{\beta,\lambda^*}_{\ell,q,\omega}(\mathbb Q^n_p)}\nonumber
\\
&\gtrsim \mathcal C_5.\Big|\int_{S_0}\Omega(y)dy\Big|\prod\limits_{i=1}^{m} \|f_i\|_{{MK}^{\beta_i,\lambda_i}_{\ell_i,q_i,\omega_i}(\mathbb Q^n_p)}\frac{\prod\limits_{i=1}^{m}(1-p^{-\lambda_i\ell_i})^{1/\ell_i}}{(1-p^{-\lambda\ell})^{1/\ell}}.\nonumber
\end{align}
Since ${\mathcal H}^p_{\Phi,\Omega}$ is  a bounded operator from ${MK}^{\beta_1,\lambda_1}_{\ell_1, q_1,\omega_1}(\mathbb Q^n_p)\times\cdots \times {MK}^{{\beta_m, \lambda_m}}_{\ell_m, q_m,\omega_m}(\mathbb Q^n_p)$ to ${MK}^{\beta,\lambda^*}_{\ell,q, \omega}(\mathbb Q^n_p)$, it follows that
\[
\mathcal C_5\lesssim {\big\|{\mathcal H}^p_{\Phi,\Omega}\big\|_{{MK}^{\beta_1,\lambda_1}_{\ell_1, q_1,\omega_1}(\mathbb Q^n_p)\times\cdots \times {MK}^{{\beta_m, \lambda_m}}_{\ell_m, q_m,\omega_m}(\mathbb Q^n_p)\to {MK}^{\beta,\lambda^*}_{\ell,q, \omega}(\mathbb Q^n_p)}}<\infty,
\]
which finishes the proof.
\end{proof}
\section{The main results about the boundness of ${\mathcal{H}}^p_{\Phi,\Omega,\vec b}$}\label{section4}
Before stating our next results, we introduce some notations which will be used throughout this section. Let $q$ and $q_i\in [1,\infty)$, $\alpha,\beta,\ell$, $\alpha_i, \beta_i, \ell_i, r_i$ are real numbers such that $\alpha_i \in (-n,\infty)$, $\ell,\ell_i\in [1,\infty)$, $r_i\in [1,\infty)$, $i=1,2,...,m$ and denote
$$  \left(\frac{1}{{{q_1}}} + \frac{1}{{{q_2}}} + \cdots + \frac{1}{{{q_m}}}\right) + \left(\frac{1}{{{r_1}}} + \frac{1}{{{r_2}}} + \cdots + \frac{1}{{{r_m}}}\right) = \frac{1}{q}, $$
  $$  \left( \frac{{\alpha_1}}{{{q_{1}}}} + \frac{\alpha _2}{{{q_{2}}}} + \cdots + \frac{\alpha_m}{{{q_{m}}}} \right)+\left( \frac{{\alpha_1}}{{{r_{1}}}} + \frac{\alpha _2}{{{r_{2}}}} + \cdots + \frac{\alpha_m}{{{r_{m}}}}\right)= \frac{\alpha}{q},$$
$$ \beta_1+\cdots+\beta_m = \beta, $$
$$ \dfrac{1}{\ell_1}+\cdots\dfrac{1}{\ell_m}=\dfrac{1}{\ell}.$$
\begin{theorem}\label{theoremMorreyCMO}
Let $\lambda_i\in \big(\frac{-1}{q_{i}},0\big)$, $\omega_i(x)=|x|_p^{\alpha_i}$, $b_i\in {\mathop {CMO}\limits^{.}}^{r_i}_{\omega_i}(\mathbb Q^n_p)$ for all $i=1,...,m$, $\omega(x)=|x|_p^{\alpha}$, $\Omega\in L^{q'}(S_0)$, and the hypothesis (\ref{lambdaMorrey}) in Theorem \ref{TheoremMorrey} holds.
\\
(i) If
$$
\mathcal C_6=\sum\limits_{\gamma\in\mathbb Z}\dfrac{\Phi(p^{\gamma})}{p^{\gamma(1+(n+\alpha)\lambda)}}\prod\limits_{i=1}^{m}\Big(2+p^{|\gamma|(\alpha_i+n)}\Big)<\infty,
$$
then we have ${\mathcal{H}}^p_{\Phi,\Omega,\vec b}$ is bounded from ${\mathop B\limits^.}^{q_1,\lambda _1}_{\omega_1}(\mathbb Q^n_p)\times\cdots \times {\mathop B\limits^.}^{{q_m},{\lambda _m}}_{\omega_m}(\mathbb Q^n_p)$ to ${\mathop B\limits^.}^{q,\lambda}_{\omega}(\mathbb Q^n_p)$.
\\
(ii) If ${\mathcal{H}}^p_{\Phi,\Omega,\vec b}$ is bounded from ${\mathop B\limits^.}^{q_1,\lambda _1}_{\omega_1}(\mathbb Q^n_p)\times\cdots \times {\mathop B\limits^.}^{{q_m},{\lambda _m}}_{\omega_m}(\mathbb Q^n_p)$ to ${\mathop B\limits^.}^{q,\lambda}_{\omega}(\mathbb Q^n_p)$ and 
$$\mathcal C_6^*=\Big|\sum\limits_{\gamma\in\mathbb Z}\dfrac{\Phi(p^{\gamma}){\gamma}^m}{p^{\gamma(1+(n+\alpha)\lambda)}}\Big|,
$$
then $\mathcal C^*_6$ is finite. Furthermore,
$$\|{\mathcal{H}}^p_{\Phi,\Omega,\vec b}\|_{{\mathop B\limits^.}^{q_1,\lambda _1}_{\omega_1}(\mathbb Q^n_p)\times\cdots \times {\mathop B\limits^.}^{{q_m},{\lambda _m}}_{\omega_m}(\mathbb Q^n_p)\to{\mathop B\limits^.}^{q,\lambda}_{\omega}(\mathbb Q^n_p)}\gtrsim \mathcal C_6^*.$$
\end{theorem}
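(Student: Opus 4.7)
The plan for part (i) is to combine the strategy of Theorem \ref{TheoremMorrey} with the standard Coifman--Rochberg--Weiss decomposition at each index $i$. Since the change-of-variable computation (\ref{S0Br}) shows that $p^{\gamma}|x|_p^{-1}y$ with $x\in B_R,\ y\in S_0$ sweeps out $B_{R-\gamma}$, I would write
\begin{equation*}
b_i(x) - b_i(p^{\gamma}|x|_p^{-1}y) = A_i + B_i + C_i,
\end{equation*}
with $A_i = b_i(x) - b_{i,\omega_i,B_R}$, $B_i = b_{i,\omega_i,B_R} - b_{i,\omega_i,B_{R-\gamma}}$, and $C_i = b_{i,\omega_i,B_{R-\gamma}} - b_i(p^{\gamma}|x|_p^{-1}y)$, then expand the product into $3^m$ summands, apply Minkowski and Fubini, and finally apply H\"older with the $(2m+1)$-tuple of exponents $q',\{q_i\},\{r_i\}$ (which satisfy $1/q'+\sum(1/q_i+1/r_i)=1$ by hypothesis) to separate the $L^{r_i}_{\omega_i}$-norms of the $X_i^{\sigma_i}\in\{A_i,B_i,C_i\}$ from the $L^{q_i}_{\omega_i}$-norms of the $f_i(p^{\gamma}|\cdot|_p^{-1}y)$.

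The $f_i$ factors are handled exactly as in Theorem \ref{TheoremMorrey}, producing $p^{\gamma(n+\alpha_i)/q_i}\omega_i(B_{R-\gamma})^{1/q_i+\lambda_i}\|f_i\|_{{\mathop B\limits^.}^{q_i,\lambda_i}_{\omega_i}}$. The $A_i$-factor contributes $\omega_i(B_R)^{1/r_i}\|b_i\|_{{\mathop{CMO}\limits^.}^{r_i}_{\omega_i}}$ directly from the CMO definition, while the $C_i$-factor, after the same change of variable used for $f_i$, yields $p^{\gamma(n+\alpha_i)/r_i}\omega_i(B_{R-\gamma})^{1/r_i}\|b_i\|_{CMO}=\omega_i(B_R)^{1/r_i}\|b_i\|_{CMO}$. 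The crucial bound on $B_i$ comes from H\"older's inequality applied to the averaging identity; working separately for $\gamma\ge 0$ (where $B_{R-\gamma}\subset B_R$) and $\gamma\le 0$ (where $B_R\subset B_{R-\gamma}$) gives
\begin{equation*}
|b_{i,\omega_i,B_R}-b_{i,\omega_i,B_{R-\gamma}}|\lesssim p^{|\gamma|(n+\alpha_i)/r_i}\|b_i\|_{{\mathop{CMO}\limits^.}^{r_i}_{\omega_i}}\le \bigl(2+p^{|\gamma|(n+\alpha_i)}\bigr)\|b_i\|_{{\mathop{CMO}\limits^.}^{r_i}_{\omega_i}},
\end{equation*}
where the last inequality uses $r_i\ge 1$ and $n+\alpha_i>0$; this is the exact source of the factor $\prod_i(2+p^{|\gamma|(n+\alpha_i)})$ in $\mathcal C_6$. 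Summing the $3^m$ contributions, dividing by $\omega(B_R)^{1/q+\lambda}$, and invoking the hypotheses on $\alpha,\alpha_i,q,q_i,r_i$ together with (\ref{lambdaMorrey}) causes the $R$-dependence to cancel exactly as in Theorem \ref{TheoremMorrey}, leaving the summand of $\mathcal C_6$.

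For part (ii), I would test against $f_i(x) = |x|_p^{(n+\alpha_i)\lambda_i}$ (as in the necessity direction of Theorem \ref{TheoremMorrey}) and $b_i(x) = \log_p|x|_p$; a direct sphere-by-sphere computation shows $b_i\in{\mathop{CMO}\limits^.}^{r_i}_{\omega_i}$ for the power weight $\omega_i(x)=|x|_p^{\alpha_i}$. For $y\in S_0$ one has $|p^{\gamma}|x|_p^{-1}y|_p=|x|_p\,p^{-\gamma}$, so $b_i(x)-b_i(p^{\gamma}|x|_p^{-1}y)=\gamma$ identically and hence $\prod_{i=1}^m(b_i(x)-b_i(\cdot))=\gamma^m$; proceeding as in the necessity argument of Theorem \ref{TheoremMorrey} with this additional $\gamma^m$ factor gives
\begin{equation*}
\mathcal H^p_{\Phi,\Omega,\vec b}(\vec f)(x) = \Bigl(\int_{S_0}\Omega(y)\,dy\Bigr)\cdot |x|_p^{(n+\alpha)\lambda}\cdot\sum_{\gamma\in\mathbb Z}\frac{\Phi(p^{\gamma})\gamma^m}{p^{\gamma(1+(n+\alpha)\lambda)}}.
\end{equation*}
Taking the ${\mathop B\limits^.}^{q,\lambda}_\omega$-norm of both sides, computing $\|f_i\|_{{\mathop B\limits^.}^{q_i,\lambda_i}_{\omega_i}}$ as in Theorem \ref{TheoremMorrey} (where these norms are finite and positive), and invoking the assumed boundedness of $\mathcal H^p_{\Phi,\Omega,\vec b}$ yields $\mathcal C_6^*\lesssim\|\mathcal H^p_{\Phi,\Omega,\vec b}\|$; in particular $\mathcal C_6^*<\infty$.

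The main obstacle is the combinatorial and algebraic bookkeeping of part (i): among the $3^m$ expanded terms one must verify that every piece produces the same cancellation of $R$-dependence (leaving only the common summand $\Phi(p^{\gamma})p^{-\gamma(1+(n+\alpha)\lambda)}$) and that the $B_i$-factor is the sole source of a $\gamma$-dependent enhancement, giving the clean product $\prod_i(2+p^{|\gamma|(n+\alpha_i)})$. A secondary technical point is the CMO-membership of $\log_p|\cdot|_p$ with respect to the power weight $|x|_p^{\alpha_i}$ and, for the lower bound in (ii), the implicit nondegeneracy condition $\int_{S_0}\Omega(y)\,dy\neq 0$ (or, failing that, a variant of the test functions achieving a nonzero leading term).
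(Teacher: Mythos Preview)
Your proposal is correct and follows essentially the same route as the paper, including the same test functions $f_i(x)=|x|_p^{(n+\alpha_i)\lambda_i}$ and $b_i(x)=\log_p|x|_p$ for part (ii). The one organizational difference worth noting concerns part (i): you propose to expand $\prod_i(A_i+B_i+C_i)$ into $3^m$ terms and then apply H\"older to each, flagging the resulting bookkeeping as ``the main obstacle''. The paper sidesteps this entirely by reversing the order of operations: it first applies H\"older with exponents $r_1,\dots,r_m,q_1,\dots,q_m$ (using $\omega^{1/q}=\prod_i\omega_i^{1/r_i}\prod_i\omega_i^{1/q_i}$) to factor the integrand as $\prod_i\|b_i(\cdot)-b_i(p^\gamma|\cdot|_p^{-1}y)\|_{L^{r_i}_{\omega_i}(B_R)}\cdot\prod_i\|f_i(p^\gamma|\cdot|_p^{-1}y)\|_{L^{q_i}_{\omega_i}(B_R)}$, and only then applies the triangle inequality to each $b_i$-factor separately. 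This yields $\prod_i(I_{1,i}+I_{2,i}+I_{3,i})$ directly, with no $3^m$ expansion needed. Incidentally, your intermediate bound $|b_{i,\omega_i,B_R}-b_{i,\omega_i,B_{R-\gamma}}|\lesssim p^{|\gamma|(n+\alpha_i)/r_i}\|b_i\|_{CMO}$ is actually sharper than the paper's $p^{|\gamma|(n+\alpha_i)}\|b_i\|_{CMO}$ (the paper unnecessarily enlarges $\omega_i(B_R)^{1/r_i'}$ to $\omega_i(B_{R-\gamma})^{1/r_i'}$ in the H\"older step), though you then discard the gain to match $\mathcal C_6$. Your observation about the implicit nondegeneracy $\int_{S_0}\Omega\neq 0$ in (ii) is valid; the paper does not address it either.
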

\begin{proof}
By applying the Minkowski inequality, it implies that
\begin{align}
&\|{\mathcal{H}}^p_{\Phi,\Omega,\vec b}(\vec f)\|_{L^q_\omega(B_R)}\nonumber
\\
&\,\,\,\,\,\leq \sum\limits_{\gamma\in\mathbb Z}\dfrac{\Phi(p^{\gamma})}{p^{\gamma}}\int_{S_0}|\Omega(y)|\Big(\int_{B_R}\prod\limits_{i=1}^m|b_i(x)-b_i(p^{\gamma}|x|_p^{-1}y)|^q.|f_i(p^{\gamma}|x|_p^{-1}y)|^q\omega dx\Big)^{1/q}dy.
\nonumber
\end{align}
According to the H\"{o}lder inequality, we get
\begin{align}
&\Big(\int_{B_R}\prod\limits_{i=1}^m|b_i(x)-b_i(p^{\gamma}|x|_p^{-1}y)|^q.|f_i(p^{\gamma}|x|_p^{-1}y)|^q\omega dx\Big)^{1/q}\nonumber
\\
&\,\,\,\,\,\leq \prod\limits_{i=1}^m\|b_i(\cdot)-b_i(p^{\gamma}|\cdot|_p^{-1}y)\|_{L^{r_i}_{\omega_i}(B_R)}.\prod\limits_{i=1}^m\|f_i(p^{\gamma}|\cdot|_p^{-1}y)\|_{L^{q_i}_{\omega_i}(B_R)}.
\nonumber
\end{align}
Therefore, by using the H\"{o}lder inequality again, $\|{\mathcal{H}}^p_{\Phi,\Omega,\vec b}(\vec f)\|_{L^q_\omega(B_R)}$ is controlled by
\begin{align}\label{HfMorCMO}
&\|\Omega\|_{L^{q'}(S_0)}\sum\limits_{\gamma\in\mathbb Z}\dfrac{\Phi(p^{\gamma})}{p^{\gamma}}\prod\limits_{i=1}^m\Big(\int_{S_0}\|b_i(\cdot)-b_i(p^{\gamma}|\cdot|_p^{-1}y)\|^{r_i}_{L^{r_i}_{\omega_i}(B_R)}dy\Big)^{1/r_i}\times
\nonumber
\\
&\,\,\times\prod\limits_{i=1}^m\Big(\int_{S_0}\|f_i(p^{\gamma}|\cdot|_p^{-1}y)\|^{q_i}_{L^{q_i}_{\omega_i}(B_R)}dy\Big)^{1/q_i}.
\end{align}
Next, we can show that
\begin{align}\label{esCMO}
\Big(\int_{S_0}\|b_i(\cdot)-b_i(p^{\gamma}|\cdot|_p^{-1}y)\|^{r_i}_{L^{r_i}_{\omega_i}(B_R)}dy\Big)^{1/r_i}\lesssim \Big(2+p^{|\gamma|(\alpha_i+n)}\Big)p^{\frac{R(\alpha_i+n)}{r_i}}\|b_i\|_{ {\mathop {CMO}\limits^{.}}^{r_i}_{\omega_i}(\mathbb Q^n_p)}.
\end{align}
Indeed, we estimate
\begin{align}
&\Big(\int_{S_0}\|b_i(\cdot)-b_i(p^{\gamma}|\cdot|_p^{-1}y)\|^{r_i}_{L^{r_i}_{\omega_i}(B_R)}dy\Big)^{1/r_i}\leq\Big(\int_{S_0}\|b_i(\cdot)-b_{i,\omega_i,B_{R}}\|^{r_i}_{L^{r_i}_{\omega_i}(B_R)}dy\Big)^{1/r_i}+\nonumber
\\
&+ \Big(\int_{S_0}\|b_i(p^{\gamma}|\cdot|_p^{-1}y)-b_{i,\omega_i,B_{R-\gamma}}\|^{r_i}_{L^{r_i}_{\omega_i}(B_R)}dy\Big)^{1/r_i}+ \Big(\int_{S_0}\|b_{i,\omega_i,B_{R-\gamma}}-b_{i,\omega_i,B_R}\|^{r_i}_{L^{r_i}_{\omega_i}(B_R)}dy\Big)^{1/r_i}\nonumber
\\
&:= I_1+I_2+I_3.
\end{align}
In view of the definition of the space ${\mathop{CMO}\limits^{.}}_{\omega_i}^{r_i}(\mathbb Q^n_p)$, we deduce
\begin{align}\label{esI1}
I_1&=\Big(\omega_i(B_R)\int_{S_0}\frac{1}{\omega_i(B_R)}\|b_i(\cdot)-b_{i,\omega_i,B_{R}}\|^{r_i}_{L^{r_i}_{\omega_i}(B_R)}dy\Big)^{1/r_i}\nonumber
\\
&\leq \Big(\omega_i(B_R)\int_{S_0}\|b_i\|^{r_i}_{{\mathop{CMO}\limits^{.}}_{\omega_i}^{r_i}(\mathbb Q^n_p)}dy\Big)^{1/r_i}\lesssim p^{\frac{R(\alpha_i+n)}{r_i}}\|b_i\|_{{\mathop{CMO}\limits^{.}}_{\omega_i}^{r_i}(\mathbb Q^n_p)}.
\end{align}
Next, we observe that
\begin{align}
I_2^{r_i}&=\int_{S_0}\int_{B_R}|b_i(p^{\gamma}|x|_p^{-1}y)-b_{i,\omega_i,B_{R-\gamma}}|^{r_i}\omega(x)dxdy\nonumber
\\
&=\int_{S_0}\sum\limits_{\theta\leq R}\Big(\int_{S_\theta}|b_i(p^{\gamma}p^{-\theta}y)-b_{i,\omega_i,B_{R-\gamma}}|^{r_i}p^{\theta\alpha_i}dx\Big)dy
\nonumber
\\
&\lesssim p^{\gamma(n+\alpha_i)}\sum\limits_{\theta\leq R}\int_{S_0}|b_i(p^{-(-\gamma+\theta)}y)-b_{i,\omega_i,B_{R-\gamma}}|^{r_i}p^{(-\gamma+\theta)\alpha_i}p^{n(-\gamma+\theta)}dy,
\nonumber
\end{align}
which implies that
\begin{align}\label{esI2before}
I_2 &\lesssim p^{\frac{\gamma(n+\alpha_i)}{r_i}}\Big(\sum\limits_{\zeta\leq R-\gamma}\int_{S_0}|b_i(p^{-\zeta}.y)-b_{i,\omega_i,B_{R-\gamma}}|^{r_i}p^{\zeta\alpha_i}p^{n\zeta}dy\Big)^{1/r_i}\nonumber
\\
&= p^{\frac{\gamma(n+\alpha_i)}{r_i}}\Big(\int_{B_{R-\gamma}}|b_i(x)-b_{i,\omega_i,B_{R-\gamma}}|^{r_i}\omega(x)dx\Big)^{1/r_i}.
\end{align}
Thus, 
\begin{align}\label{esI2}
I_2\lesssim p^{\frac{\gamma(n+\alpha_i)}{r_i}}\omega^{\frac{1}{r_i}}(B_{R-\gamma})\|b_i\|_{{\mathop{CMO}\limits^{.}}_{\omega_i}^{r_i}(\mathbb Q^n_p)}\nonumber\lesssim p^{\frac{R(\alpha_i+n)}{r_i}}\|b_i\|_{{\mathop{CMO}\limits^{.}}_{\omega_i}^{r_i}(\mathbb Q^n_p)}.
\\
\end{align}
Now, we see that
\begin{align}\label{esI3}
I_3&=\Big(\int_{S_0}\int_{B_R}|b_{i,\omega_i,B_{R-\gamma}}-b_{i,\omega_i,B_R}|^{r_i}\omega_i(x)dxdy\Big)^{1/r_i}\lesssim \omega_i^{{1/r_i}}(B_R).|b_{i,\omega_i,B_{R-\gamma}}-b_{i,\omega_i,B_R}|\nonumber
\\
&\lesssim p^{\frac{R(\alpha_i+n)}{r_i}}.|b_{i,\omega_i,B_{R-\gamma}}-b_{i,\omega_i,B_R}|.
\end{align}
In the case $\gamma<0$, by using the H\"{o}lder inequality, it follows that
\begin{align}\label{esI3b1}
|b_{i,\omega_i,B_{R-\gamma}}-b_{i,\omega_i,B_R}|&\leq \dfrac{1}{\omega_i(B_R)}\int_{B_{R}}|b_i(x)-b_{i,\omega_i,B_{R-\gamma}}|\omega_i(x)dx\nonumber
\\
&\leq \dfrac{\omega_i^{\frac{1}{r'_i}}(B_{R-\gamma})}{\omega_i(B_R)}\Big(\int_{B_{R-\gamma}}|b_i(x)-b_{i,\omega_i,B_{R-\gamma}}|^{r_i}\omega_i(x)dx\Big)^{1/r_i}
\nonumber
\\
&\leq \dfrac{\omega_i(B_{R-\gamma})}{\omega_i(B_R)}\|b_i\|_{{\mathop{CMO}\limits^{.}}_{\omega_i}^{r_i}(\mathbb Q^n_p)}\lesssim p^{-\gamma(\alpha_i+n)}\|b_i\|_{{\mathop{CMO}\limits^{.}}_{\omega_i}^{r_i}(\mathbb Q^n_p)}.
\end{align}
Otherwise, by estimating as above, we also have
\begin{align}\label{esI3b2}
|b_{i,\omega_i,B_{R-\gamma}}-b_{i,\omega_i,B_R}|\lesssim p^{\gamma(\alpha_i+n)}\|b_i\|_{{\mathop{CMO}\limits^{.}}_{\omega_i}^{r_i}(\mathbb Q^n_p)}.
\end{align}
By (\ref{esI3}), (\ref{esI3b1}) and (\ref{esI3b2}), we obtain that
\begin{align}
I_3\lesssim p^{\frac{R(\alpha_i+n)}{r_i}}.p^{|\gamma|(\alpha_i+n)}\|b_i\|_{{\mathop{CMO}\limits^{.}}_{\omega_i}^{r_i}(\mathbb Q^n_p)}.
\nonumber
\end{align}
From this, by (\ref{esI1}) and (\ref{esI2}), we finish the proof of the inequality (\ref{esCMO}). In view of the relations (\ref{S0Br}),  (\ref{HfMorCMO}) and (\ref{esCMO}), we imply
\begin{align}\label{HfMorCMO1}
\|{\mathcal{H}}^p_{\Phi,\Omega,\vec b}(\vec f)\|_{L^q_\omega(B_R)}&\leq \|\Omega\|_{L^{q'}(S_0)}\sum\limits_{\gamma\in\mathbb Z}\dfrac{\Phi(p^{\gamma})}{p^{\gamma}}\prod\limits_{i=1}^m \Big(2+p^{{|\gamma|(\alpha_i+n)}}\Big)p^{\frac{R(\alpha_i+n)}{r_i}}\|b_i\|_{ {\mathop {CMO}\limits^{.}}^{r_i}_{\omega_i}(\mathbb Q^n_p)}
\times\nonumber
\\
&\,\,\times\prod\limits_{i=1}^m p^{\frac{\gamma(n+\alpha_i)}{q_i}}\|f_i\|_{L^{q_i}_{\omega_i}(B_{R-\gamma})}.
\end{align}
By having the hypothesis (\ref{lambdaMorrey}), it deduces
$\frac{1}{\omega^{\frac{1}{q}+\lambda}(B_{R})}\simeq \prod\limits_{i=1}^m\frac{p^{\frac{-R(n+\alpha_i)}{r_i}-\gamma(n+\alpha_i)(\frac{1}{q_i}+\lambda_i)}}{\omega_i^{\frac{1}{q_i}+\lambda_i}(B_{R-\gamma})}.
$
Thus, by (\ref{HfMorCMO1}) and the definition of the $\lambda$-central Morrey $p$-adic space, we have
\begin{align}
\|{\mathcal{H}}^p_{\Phi,\Omega,\vec b}(\vec f)\|_{{\mathop B\limits^.}^{q,\lambda}_{\omega}(\mathbb Q^n_p)}&\lesssim  \mathcal C_6.\|\Omega\|_{L^{q'}(S_0)}.\prod\limits_{i=1}^m|b_i\|_{ {\mathop {CMO}\limits^{.}}^{r_i}_{\omega_i}(\mathbb Q^n_p)}.\prod\limits_{i=1}^m \|f_i\|_{{\mathop B\limits^.}^{q_i,\lambda_i}_{\omega_i}(\mathbb Q^n_p)},\nonumber
\end{align}
which completes the proof for the first part of theorem.
\\

Next, we will prove the second part of theorem. Let us take that ${\mathcal{H}}^p_{\Phi,\Omega,\vec b}$ is bounded from ${\mathop B\limits^.}^{q_1,\lambda _1}_{\omega_1}(\mathbb Q^n_p)\times\cdots \times {\mathop B\limits^.}^{{q_m},{\lambda _m}}_{\omega_m}(\mathbb Q^n_p)$ to ${\mathop B\limits^.}^{q,\lambda}_{\omega}(\mathbb Q^n_p)$. We choose
$$
b_i(x)={\rm log}_{p}|x|_p\,\,\textit{\rm and}\,\,f_i(x)=|x|_p^{(n+\alpha_i)\lambda_i},\,\textit{\rm for all}\, i=1,...,m.
$$
Note that, by Lemma 6.1 in \cite{Hung} (also see Lemma 2.1 in \cite{Rim}), we imply $b_i\in {\rm BMO}_\omega(\mathbb Q^n_p)\subset {\rm CMO}^{r_i}_\omega(\mathbb Q^n_p)$. 
From (\ref{estimatefi}), we have $f_i\in{\mathop B\limits^.}^{q_i,\lambda _i}_{\omega_i}(\mathbb Q^n_p)$ and $|x|_p^{(n+\alpha)\lambda}\in{\mathop B\limits^.}^{q,\lambda}_{\omega}(\mathbb Q^n_p)$.
\\
Thus, we get
\begin{align}
{\mathcal H}^p_{\Phi,\Omega,\vec b}(\vec f)(x)&=\sum_{\gamma\in\mathbb Z}\int_{S_0} \dfrac{\Phi(p^{\gamma})}{p^{\gamma}}\Omega(y)\prod\limits_{i=1}^{m}\Big({\rm log}_p|x|_p- {\rm log}_p|p^{\gamma}|x|_p^{-1}y|_p\Big)\prod\limits_{i=1}^{m}|p^{\gamma}|x|_p^{-1}y|_p^{(n+\alpha_i)\lambda_i}dy
\nonumber
\\
&=\Big(\sum_{\gamma\in\mathbb Z} \dfrac{\Phi(p^{\gamma})\gamma^m}{p^{\gamma}}\Big)\Big(\int_{S_0}\Omega(y)dy\Big)|x|_p^{(n+\alpha)\lambda}.\nonumber
\end{align}
Therefore, we deduce
\begin{align}
\|{\mathcal H}^p_{\Phi,\Omega,\vec b}(\vec f)\|_{{\mathop B\limits^.}^{q,\lambda}_{\omega}(\mathbb Q^n_p)}&=\mathcal C_6^*.\Big|\int_{S_0}\Omega(y)dy\Big|.\left\||x|_p^{(n+\alpha)\lambda}\right\|_{{\mathop B\limits^.}^{q,\lambda}_{\omega}(\mathbb Q^n_p)}\nonumber
\\
&\gtrsim \mathcal C_6^*\prod\limits_{i=1}^m|b_i\|_{ {\mathop {CMO}\limits^{.}}^{r_i}_{\omega_i}(\mathbb Q^n_p)}.\prod\limits_{i=1}^m \|f_i\|_{{\mathop B\limits^.}^{q_i,\lambda_i}_{\omega_i}(\mathbb Q^n_p)}.\nonumber
\end{align}
This leads that $\mathcal C_4^*$ is finite. The proof of theorem is ended.
\end{proof}
\begin{theorem}\label{theoremMorreyCMO2}
Let $1\leq q, r_1^*, ..., r_m^*, q_1^*, ..., q_m^*, \zeta<\infty$, $\omega\in A_{\zeta}$ with the finite critical index $r_\omega$ for the reverse H\"{o}lder condition, $\delta\in (1,r_\omega)$, $\Omega\in L^{q'}(S_0)$, $\lambda_i\in \big(\frac{-1}{q^*_{i}},0\big)$ and $b_i\in {\mathop {CMO}\limits^{.}}^{r_i^*}_{\omega}(\mathbb Q^n_p)$ for all $i=1,...,m$. Assume that the hypothesis (\ref{lambdai*}) in Theorem \ref{TheoremMorrey1} holds and the two following conditions are true:
\begin{align}\label{r*q*}
 \dfrac{1}{q}>\Big(\dfrac{1}{r_1^*}+ ...+ \dfrac{1}{r_m^*}+ \dfrac{1}{q_1^*}+ ...+ \dfrac{1}{q_m^*}\Big)\zeta\dfrac{r_\omega}{r_\omega-1},
\end{align}
\begin{align}\label{C7}
\mathcal C_7=\sum\limits_{\gamma\geq 0}\dfrac{\Phi(p^{\gamma})}{p^{\gamma(1+n\zeta\lambda^*)}}(2+p^{\gamma\zeta n})^m+\sum\limits_{\gamma<0}\dfrac{\Phi(p^{\gamma})}{p^{\gamma(1+n\lambda^*(\delta-1)/\delta))}}(2+p^{-\gamma\zeta n})^m<\infty.
\end{align}
Then, ${\mathcal{H}}^p_{\Phi,\Omega,\vec b}$ is bounded from ${\mathop B\limits^.}^{q_1^*,\lambda _1}_{\omega}(\mathbb Q^n_p)\times\cdots \times {\mathop B\limits^.}^{{q_m^*},{\lambda _m}}_{\omega}(\mathbb Q^n_p)$ to ${\mathop B\limits^.}^{q,\lambda^*}_{\omega}(\mathbb Q^n_p)$.
\end{theorem}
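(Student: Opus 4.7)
The plan is to combine the weighted Muckenhoupt technology from the proof of Theorem \ref{TheoremMorrey1} with the Coifman--Rochberg--Weiss commutator decomposition developed in the proof of Theorem \ref{theoremMorreyCMO}. Fix $R\in\mathbb Z$ and start with the Minkowski inequality to move the sum in $\gamma$ and the spherical integral against $\Omega$ outside the $L^q_\omega(B_R)$ norm. At this stage I want to split the integrand, for each fixed $\gamma$ and $y\in S_0$, into two ``halves'': the product $\prod_{i=1}^m (b_i(x)-b_i(p^\gamma|x|_p^{-1}y))$ in one factor and $\prod_{i=1}^m f_i(p^\gamma|x|_p^{-1}y)$ in the other. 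Hypothesis (\ref{r*q*}) is exactly the room needed: there is an $r\in(1,r_\omega)$ such that $q r'= \zeta s$, where $1/s=\sum 1/r_i^*+\sum 1/q_i^*$, so H\"older's inequality in $L^q_\omega$ followed by the reverse H\"older estimate (as in (\ref{reverMor1})) lets me replace $\omega$ by Lebesgue measure in the $L^{q/\zeta}$ factor at the cost of a harmless $\omega^{1/q}(B_R)|B_R|^{-\zeta/q}$.

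Next, I apply H\"older in the unweighted $L^{q/\zeta}$ norm with exponents $r_i^*/\zeta$ and $q_i^*/\zeta$ to isolate each $b_i$--difference and each $f_i$. After exchanging the order of integration in $(x,y)$, the $f_i$ factors are handled exactly as in (\ref{S0Br}): a change of variable in $x$ (via spheres $S_\theta$) together with the dilation $z=p^{-\theta+\gamma}y$ yields a copy of $\|f_i\|_{L^{q_i^*/\zeta}(B_{R-\gamma})}$ times an explicit power of $p^\gamma$. The commutator factors $\big(\int_{S_0}\int_{B_R}|b_i(x)-b_i(p^\gamma|x|_p^{-1}y)|^{r_i^*}\,dx\,dy\big)^{1/r_i^*}$ I decompose as $I_1+I_2+I_3$ precisely as in (\ref{esI1})--(\ref{esI3}), with $b_{i,\omega,B_R}$ and $b_{i,\omega,B_{R-\gamma}}$ inserted. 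The only novelty is that, because $\omega$ is no longer a power weight, the comparison $|b_{i,\omega,B_{R-\gamma}}-b_{i,\omega,B_R}|\lesssim \tfrac{\omega(B_{R-\gamma})}{\omega(B_R)}\|b_i\|_{\mathop{CMO}\limits^{.}^{r_i^*}_\omega}$ for $\gamma<0$ (and the symmetric case for $\gamma\ge 0$) must be controlled by Proposition \ref{pro2.3DFan}; this produces factors $\lesssim p^{|\gamma|\zeta n}$, which accounts exactly for the $(2+p^{\pm\gamma\zeta n})^m$ appearing in $\mathcal C_7$. Unweighted local norms like $\int_{B_{R-\gamma}}\omega\,dx$ are converted back to Muckenhoupt/Lebesgue hybrids using Proposition \ref{pro2.4DFan}.

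Putting the $2m$ H\"older pieces back together and then unfolding the Minkowski step, I obtain
\[
\|{\mathcal H}^p_{\Phi,\Omega,\vec b}(\vec f)\|_{L^{q}_\omega(B_R)}
\lesssim \|\Omega\|_{L^{q'}(S_0)}\prod_{i=1}^m \|b_i\|_{\mathop{CMO}\limits^{.}^{r_i^*}_\omega}\sum_{\gamma\in\mathbb Z}\frac{\Phi(p^\gamma)}{p^\gamma}
\big(2+p^{|\gamma|\zeta n}\big)^m\,\frac{\omega^{1/q}(B_R)}{\omega^{1/q^{**}}(B_{R-\gamma})}\prod_{i=1}^m\|f_i\|_{L^{q_i^*}_\omega(B_{R-\gamma})},
\]
where $1/q^{**}=\sum 1/q_i^*$. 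Dividing by $\omega(B_R)^{1/q+\lambda^*}$ and applying (\ref{lambdai*}) together with the ${\mathop B\limits^.}^{q_i^*,\lambda_i}_\omega$ norms of $f_i$ leaves me with the scalar sum
\[
\sum_{\gamma\in\mathbb Z}\frac{\Phi(p^\gamma)}{p^\gamma}\big(2+p^{|\gamma|\zeta n}\big)^m\Big(\frac{\omega(B_{R-\gamma})}{\omega(B_R)}\Big)^{\lambda^*}.
\]
Since (\ref{lambdai*}) forces $\lambda^*<0$, I split into $\gamma\ge 0$ and $\gamma<0$ and estimate the weight ratio with Proposition \ref{pro2.3DFan} as in (\ref{BRGa>=0})--(\ref{BRGa<0}): this yields $p^{-\gamma n\zeta\lambda^*}$ and $p^{-\gamma n\lambda^*(\delta-1)/\delta}$ respectively, and the resulting series is exactly $\mathcal C_7$.

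The main obstacle I foresee is bookkeeping in the second paragraph: the $I_3$ term has to be estimated carefully in both regimes $\gamma\ge 0$ and $\gamma<0$, and unlike the power weight case of Theorem \ref{theoremMorreyCMO}, the ratio $\omega(B_{R-\gamma})/\omega(B_R)$ is governed by Proposition \ref{pro2.3DFan} with different sharp exponents ($\zeta$ or $(r-1)/r$) depending on the sign of $\gamma$. Choosing the exponents so that the resulting factor $(2+p^{|\gamma|\zeta n})^m$ matches the two pieces of $\mathcal C_7$ exactly, while simultaneously keeping the $f_i$-estimate compatible with (\ref{lambdai*}), is the delicate point; everything else is a direct adaptation of the two earlier theorems.
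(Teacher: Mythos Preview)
Your proposal is correct and follows essentially the same route as the paper: Minkowski in $\gamma$ and $y$, the $I_1+I_2+I_3$ commutator decomposition with the weighted CMO averages, Proposition~\ref{pro2.3DFan} for the oscillation term (yielding the $(2+p^{|\gamma|\zeta n})^m$ factor), Proposition~\ref{pro2.4DFan} to pass between unweighted and weighted local norms, and finally the split (\ref{BRGa>=0})--(\ref{BRGa<0}) of the weight ratio to recover $\mathcal C_7$. The only organizational difference is that the paper first introduces intermediate exponents $r_i,q_i$ with $\sum 1/r_i+\sum 1/q_i=1/q$ and $1/r_i>\zeta r_\omega/(r_i^*(r_\omega-1))$, applies H\"older in $L^q_\omega(B_R)$ with these, and then uses the reverse H\"older step on each weighted factor separately, whereas you do one global reverse H\"older first and then H\"older in the unweighted space; both reach the same displayed estimate. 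Note two small slips in your exponents: the relation coming from (\ref{r*q*}) should read $qr'=s/\zeta$ (not $\zeta s$), so the unweighted space is $L^{s/\zeta}$ rather than ``$L^{q/\zeta}$'', and correspondingly your commutator factor carries exponent $r_i^*/\zeta$, not $r_i^*$.
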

\begin{proof}
From the inequality (\ref{r*q*}), there exist $r_1, ..., r_m, q_1, ..., q_m$ such that
$$
\frac{1}{r_i}>\frac{\zeta}{r_i^*}\frac{r_\omega}{r_\omega-1},$$
$$
\frac{1}{q_i}>\frac{\zeta}{q_i^*}\frac{r_\omega}{r_\omega-1},$$
and $$ \sum\limits_{i=1}^m\frac{1}{r_i}+ \frac{1}{q_i}=\frac{1}{q}.
$$
Because of  $\sum\limits_{i=1}^m\dfrac{1}{r_i}+ \dfrac{1}{q_i}=\dfrac{1}{q}$ and making a similar argument as (\ref{HfMorCMO}), we also have
\begin{align}\label{HfMorCMO1}
\|{\mathcal{H}}^p_{\Phi,\Omega,\vec b}(\vec f)\|_{L^q_\omega(B_R)}&\lesssim\|\Omega\|_{L^{q'}(S_0)}\sum\limits_{\gamma\in\mathbb Z}\dfrac{\Phi(p^{\gamma})}{p^{\gamma}}\prod\limits_{i=1}^m\Big(\int_{S_0}\|b_i(\cdot)-b_i(p^{\gamma}|\cdot|_p^{-1}y)\|^{r_i}_{L^{r_i}_{\omega}(B_R)}dy\Big)^{1/r_i}\times
\nonumber
\\
&\,\,\times\prod\limits_{i=1}^m\Big(\int_{S_0}\|f_i(p^{\gamma}|\cdot|_p^{-1}y)\|^{q_i}_{L^{q_i}_{\omega}(B_R)}dy\Big)^{1/q_i}.
\end{align}
To prove this theorem, we need to show the following result,
\begin{align}\label{esCMO1}
\Big(\int_{S_0}\|b_i(\cdot)-b_i(p^{\gamma}|\cdot|_p^{-1}y)\|^{r_i}_{L^{r_i}_{\omega}(B_R)}dy\Big)^{1/r_i}\lesssim \Big(2+p^{|\gamma|n\zeta}\Big)\omega(B_R)^{\frac{1}{r_i}}\|b_i\|_{ {\mathop {CMO}\limits^{.}}^{r_i^*}_{\omega}(\mathbb Q^n_p)}.
\end{align}
Actually, we compose
\begin{align}\label{composeCMO1}
&\Big(\int_{S_0}\|b_i(\cdot)-b_i(p^{\gamma}|\cdot|_p^{-1}y)\|^{r_i}_{L^{r_i}_{\omega}(B_R)}dy\Big)^{1/r_i}\lesssim \Big(\int_{S_0}\|b_i(\cdot)-b_{i,\omega,B_{R}}\|^{r_i}_{L^{r_i}_{\omega}(B_R)}dy\Big)^{1/r_i}\nonumber
\\
&+\Big(\int_{S_0}\|b_i(p^{\gamma}|\cdot|_p^{-1}y)-b_{i,\omega,B_{R-\gamma}}\|^{r_i}_{L^{r_i}_{\omega}(B_R)}dy\Big)^{1/r_i}+ \Big(\int_{S_0}\|b_{i,\omega,B_{R-\gamma}}-b_{i,\omega,B_R}\|^{r_i}_{L^{r_i}_{\omega}(B_R)}dy\Big)^{1/r_i}
\nonumber
\\
&:= J_{1,i}+J_{2,i}+J_{3,i}.
\end{align}
By estimating as (\ref{esI1}) above, we deduce
\begin{align}\label{esJ1end}
J_{1,i}\lesssim \omega(B_R)^{\frac{1}{r_i}}\|b_i\|_{{\mathop{CMO}\limits^{.}}_{\omega}^{r_i}(\mathbb Q^n_p)}.
\end{align}
Since $\frac{1}{r_i}>\frac{\zeta}{r_i^*}\frac{r_\omega}{r_\omega-1}$, there exists $\beta_{i,0}\in (1, r_\omega)$ rewarding $\frac{r_i^*}{\zeta}=r_i\beta_{i,0}'$. By the H\"{o}lder inequality and the reverse H\"{o}lder condition again, we infer
\begin{align}
J_{2,i}^{r_i}&\leq \int_{S_0}\Big(\int_{B_R}{|b_i(p^{\gamma}|x|_p^{-1}y)-b_{i,\omega,B_{R-\gamma}}|^{\frac{r_i^*}{\zeta}}dx\Big)^{\frac{\zeta r_i}{r_i^*}}\Big(\int_{B_R}\omega(x)^{\beta_{i,0}}dx\Big)^{\frac{1}{\beta_{i,0}}}dy}\nonumber
\\
&\leq |B_R|^{\frac{-1}{\beta_{i,0}'}}\omega(B_R)\int_{S_0}\Big(\int_{B_R}{|b_i(p^{\gamma}|x|_p^{-1}y)-b_{i,\omega,B_{R-\gamma}}|^{\frac{r_i^*}{\zeta}}dx\Big)^{\frac{\zeta r_i}{r_i^*}}dy}\nonumber
\\
&\lesssim |B_R|^{\frac{-\zeta r_i}{r^*_i}}\omega(B_R)\Big(\int_{S_0}\int_{B_R}{|b_i(p^{\gamma}|x|_p^{-1}y)-b_{i,\omega,B_{R-\gamma}}|^{\frac{r_i^*}{\zeta}}dxdy}\Big)^{\frac{\zeta r_i}{r_i^*}}.\nonumber
\end{align}
By evaluating as (\ref{esI2before}) and Proposition \ref{pro2.4DFan}, we have
\begin{align}
&\Big(\int_{S_0}\int_{B_R}{|b_i(p^{\gamma}|x|_p^{-1}y)-b_{i,\omega,B_{R-\gamma}}|^{\frac{r_i^*}{\zeta}}dxdy}\Big)^{\frac{\zeta}{r_i^*}}\lesssim p^{\frac{\gamma n\zeta}{r^*_i}}\Big(\int_{B_{R-\gamma}}|b_i(x)-b_{i,\omega,B_{R-\gamma}}|^{\frac{r_i^*}{\zeta}}dx\Big)^{\frac{\zeta}{r_i^*}}\nonumber
\\
&\lesssim p^{\frac{\gamma n\zeta}{r^*_i}}|B_{R-\gamma}|^{\frac{\zeta}{r^*_i}}\omega(B_{R-\gamma})^{\frac{-1}{r^*_i}}\|b_i(\cdot)-b_{i,\omega,B_{R-\gamma}}\|_{L^{r^*_i}_\omega(B_{R-\gamma})}.\nonumber
\end{align}
This deduces that
\begin{align}\label{esJ2end}
J_{2,i}&\lesssim\omega(B_R)^{\frac{1}{r_i}}p^{\frac{\gamma n\zeta}{r^*_i}}\Big(\dfrac{|B_{R-\gamma}|}{|B_R|}\Big)^{\frac{\zeta}{r^*_i}}\omega(B_{R-\gamma})^{\frac{-1}{r^*_i}}\|b_i(\cdot)-b_{i,\omega,B_{R-\gamma}}\|_{L^{r^*_i}_\omega(B_{R-\gamma})}\nonumber
\\
&\lesssim\omega(B_R)^{\frac{1}{r_i}}\|b_i\|_{{\mathop{CMO}\limits^{.}}_{\omega}^{r_i^*}(\mathbb Q^n_p)}.
\end{align}
By the reasons as (\ref{esI3}), (\ref{esI3b1}) and (\ref{esI3b2}) above, we estimate
\begin{align}
J_{3,i}&\lesssim \omega(B_R)^{^{\frac{1}{r_i}}}|b_{i,\omega,B_{R-\gamma}}-b_{i,\omega,B_R}|\lesssim \omega(B_R)^{^{\frac{1}{r_i}}}\left\{ \begin{array}{l}
\dfrac{\omega(B_{R-\gamma})}{\omega(B_R)}\|b_i\|_{{\mathop{CMO}\limits^{.}}_{\omega}^{r_i}(\mathbb Q^n_p)},\,\textit{\rm if} \,\gamma<0
\\
\\
\dfrac{\omega(B_R)}{\omega(B_{R-\gamma})}\|b_i\|_{{\mathop{CMO}\limits^{.}}_{\omega}^{r_i}(\mathbb Q^n_p)},\,\textit{\rm otherwise}
\end{array} \right..\nonumber
\end{align}
Because of giving $\omega\in A_{\zeta}$ and using Proposition \ref{pro2.3DFan}, we have 
\begin{align}
\left\{ \begin{array}{l}
\dfrac{\omega(B_{R-\gamma})}{\omega(B_R)}\lesssim \Big(
\dfrac{|B_{R-\gamma}|}{|B_R|}\Big)^{\zeta}\lesssim p^{-\gamma n\zeta},\,\textit{\rm if} \,\gamma<0
\\
\\
\dfrac{\omega(B_R)}{\omega(B_{R-\gamma})}\lesssim \Big(
\dfrac{|B_{R}|}{|B_{R-\gamma}|}\Big)^{\zeta}\lesssim p^{\gamma n\zeta},\,\textit{\rm otherwise}
\end{array} \right..\nonumber
\end{align}
Hence, we have
\begin{align}\label{esJ3end}
J_{3,i}\lesssim \omega(B_R)^{^{\frac{1}{r_i}}}p^{|\gamma| n\zeta}\|b_i\|_{{\mathop{CMO}\limits^{.}}_{\omega}^{r_i}(\mathbb Q^n_p)}.
\end{align}
Note that, from the inequality $r_i<r^*_i$, we get $\|b_i\|_{{\mathop{CMO}\limits^{.}}_{\omega}^{r_i}(\mathbb Q^n_p)}\lesssim \|b_i\|_{{\mathop{CMO}\limits^{.}}_{\omega}^{r^*_i}(\mathbb Q^n_p)}.$
Therefore, by (\ref{esJ1end}), (\ref{esJ2end}) and (\ref{esJ3end}), we obtain the proof of the inequality (\ref{esCMO1}). 
\\

On the other hand, because of having $\frac{1}{q_i}>\frac{1}{q_i^*}\zeta\dfrac{r_\omega}{r_\omega-1}$, there exists $\eta_i\in (1, r_\omega)$ such that $\frac{q_i^*}{\zeta}=q_i\eta_i'$. By estimating as (\ref{esJ2end}) above, we infer
\begin{align}
\Big(\int_{S_0}\|f_i(p^{\gamma}|\cdot|_p^{-1}y)\|^{q_i}_{L^{q_i}_{\omega}(B_R)}dy\Big)^{1/q_i}&\lesssim\omega(B_R)^{\frac{1}{q_i}}p^{\frac{\gamma n\zeta}{q^*_i}}\Big(\dfrac{|B_{R-\gamma}|}{|B_R|}\Big)^{\frac{\zeta}{q^*_i}}\omega(B_{R-\gamma})^{\frac{-1}{q^*_i}}\|f_i\|_{L^{q^*_i}_\omega(B_{R-\gamma})}\nonumber
\\
&\lesssim\omega(B_R)^{\frac{1}{q_i}}\omega(B_{R-\gamma})^{\frac{-1}{q^*_i}}\|f_i\|_{L^{q^*_i}_\omega(B_{R-\gamma})}.\nonumber
\end{align}
From this, by (\ref{HfMorCMO1}) and (\ref{esCMO1}), one has
\begin{align}
\|{\mathcal{H}}^p_{\Phi,\Omega,\vec b}(\vec f)\|_{L^q_\omega(B_R)}&\lesssim \|\Omega\|_{L^{q'}(S_0)}\sum\limits_{\gamma\in\mathbb Z}\dfrac{\Phi(p^{\gamma})}{p^{\gamma}}\Big(2+p^{|\gamma|n\zeta}\Big)^m\omega(B_R)^{\frac{1}{q}}\prod\limits_{i=1}^m \|b_i\|_{ {\mathop {CMO}\limits^{.}}^{r^*_i}_{\omega}(\mathbb Q^n_p)}
\times\nonumber
\\
&\,\,\times\prod\limits_{i=1}^m\omega(B_{R-\gamma})^{\frac{-1}{q^*_i}}\|f_i\|_{L^{q^*_i}_{\omega}(B_{R-\gamma})}.\nonumber
\end{align}
As a consequence, by (\ref{lambdai*}), for all $R\in\mathbb Z$, we have
\begin{align}
\dfrac{1}{\omega(B_R)^{1/q+\lambda^*}}\|{\mathcal{H}}^p_{\Phi,\Omega,\vec b}(\vec f)\|_{L^q_\omega(B_R)}&\lesssim \|\Omega\|_{L^{q'}(S_0)}\Big(\sum\limits_{\gamma\in\mathbb Z}\dfrac{\Phi(p^{\gamma})}{p^{\gamma}}\Big(2+p^{|\gamma|n\zeta}\Big)^m\Big(\dfrac{\omega(B_{R-\gamma})}{\omega(B_{R})}\Big)^{\lambda_*}\Big)\times
\nonumber
\\
&\,\,\,\,\,\,\,\,\,\,\,\times\prod\limits_{i=1}^m \|b_i\|_{ {\mathop {CMO}\limits^{.}}^{r^*_i}_{\omega}(\mathbb Q^n_p)}.\prod\limits_{i=1}^m\|f_i\|_{{\mathop B\limits^.}^{q_i^*,\lambda_i}_{\omega}(\mathbb Q^n_p)}.\nonumber
\end{align}
Hence, by (\ref{BRGa>=0}) and (\ref{BRGa<0}), we have
\begin{align}
\|{\mathcal{H}}^p_{\Phi,\Omega,\vec b}(\vec f)\|_{{\mathop B\limits^.}^{q,\lambda^*}_{\omega}(\mathbb Q^n_p)}&\lesssim  \mathcal C_7.\|\Omega\|_{L^{q'}(S_0)}\prod\limits_{i=1}^m \|b_i\|_{ {\mathop {CMO}\limits^{.}}^{r^*_i}_{\omega}(\mathbb Q^n_p)}.\prod\limits_{i=1}^m\|f_i\|_{{\mathop B\limits^.}^{q_i^*,\lambda_i}_{\omega}(\mathbb Q^n_p)},\nonumber
\end{align}
which achieves the desired result.
\end{proof}
\begin{theorem}\label{theoremMHerzCMO}
Let $\omega_i(x)=|x|_p^{\alpha_i}$, $b_i\in {\mathop {CMO}\limits^{.}}^{r_i}_{\omega_i}(\mathbb Q^n_p)$, $\lambda_i\geq 0$ for all $i=1,...,m$, $\omega(x)=|x|_p^{\alpha}$, $\Omega\in L^{q'}(S_0)$. Assume that the hypothesis (\ref{lambdai*}) in Theorem \ref{TheoremMorrey1} is true and the following conditions hold:
\begin{align}\label{beta*}
\beta^*=\beta_1+\cdots+\beta_m-\dfrac{n+\alpha_1}{r_1}-\cdots-\dfrac{n+\alpha_m}{r_m},
\end{align}
\begin{align}
\mathcal C_8=\sum\limits_{\gamma\in\mathbb Z}\dfrac{\Phi(p^{\gamma})}{p^{\gamma(1-\beta^*-\frac{(n+\alpha)}{q}+\lambda^*)}}\prod\limits_{i=1}^{m}\Big(2+p^{|\gamma|(\alpha_i+n)}\Big)<\infty.
\end{align}
Then, we have 
\[
\|{\mathcal{H}}^p_{\Phi,\Omega,\vec b}(\vec f)\|_{{MK}^{\beta^*,\lambda^*}_{\ell, q,\omega}(\mathbb Q^n_p)}\lesssim \mathcal C_8.\|\Omega\|_{L^{q'}(S_0)}.\prod\limits_{i=1}^m\|b_i\|_{ {\mathop {CMO}\limits^{.}}^{r_i}_{\omega_i}(\mathbb Q^n_p)}.\prod\limits_{i=1}^m \|f_i\|_{{MK}^{\beta_i,\lambda_i}_{\ell_i, q_i,\omega_i}(\mathbb Q^n_p)}.
\]
\end{theorem}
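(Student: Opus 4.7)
The plan is to merge the CMO-commutator estimate derived in the proof of Theorem~\ref{theoremMorreyCMO} with the Morrey--Herz bookkeeping used in the proof of Theorem~\ref{TheoremMorreyHerz}. First I would work one shell at a time: since the Morrey--Herz norm is built from $\|\mathcal{H}^p_{\Phi,\Omega,\vec b}(\vec f)\chi_k\|_{L^q_\omega}$, I apply Minkowski's inequality to move the sum in $\gamma$ and the $y$-integration outside the $L^q$-norm, then Hölder's inequality with exponents $r_i$ acting on the commutator factors $b_i(x)-b_i(p^\gamma|x|_p^{-1}y)$ and exponents $q_i$ acting on the $f_i$ factors, in direct analogy with~(\ref{HfMorCMO}). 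This reduces the estimate to two independent factors per index~$i$.

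For the commutator part, the key step is a shell version of~(\ref{esCMO}): since $\chi_k$ is supported inside $B_k$, one has
\begin{equation*}
\Big(\int_{S_0}\bigl\|\bigl(b_i(\cdot)-b_i(p^\gamma|\cdot|_p^{-1}y)\bigr)\chi_k\bigr\|_{L^{r_i}_{\omega_i}}^{r_i}dy\Big)^{1/r_i}\lesssim (2+p^{|\gamma|(\alpha_i+n)})\,p^{k(\alpha_i+n)/r_i}\|b_i\|_{\mathop{CMO}\limits^{.}{}^{r_i}_{\omega_i}(\mathbb{Q}^n_p)},
\end{equation*}
which follows by enlarging $S_k$ to $B_k$ and quoting (\ref{esCMO}) verbatim with $R=k$. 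For the $f_i$ part, the change-of-variable computation in (\ref{S0Sr}) gives
\begin{equation*}
\Big(\int_{S_0}\|f_i(p^\gamma|\cdot|_p^{-1}y)\chi_k\|_{L^{q_i}_{\omega_i}}^{q_i}dy\Big)^{1/q_i}\simeq p^{\gamma(n+\alpha_i)/q_i}\|f_i\chi_{k-\gamma}\|_{L^{q_i}_{\omega_i}(\mathbb{Q}^n_p)}.
\end{equation*}
Assembling these two bounds and using the preamble identities $\sum 1/q_i=1/q$, $\sum \alpha_i/q_i=\alpha/q$, and the definition~(\ref{beta*}) of $\beta^\ast$, the accumulated $k$-dependence collapses to a clean power $p^{k(\beta-\beta^\ast)}$, which is exactly the right factor to compensate the weight $p^{k\beta^\ast\ell}$ appearing in the outer Morrey--Herz norm.

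From this point the argument runs along the lines of~(\ref{MorreyHerz3.5}). I raise to the $\ell$-th power, sum over $k\le k_0$, pull the sum in $\gamma$ outside the $\ell^\ell$-norm via Minkowski, and then apply Hölder with the exponents $\ell_i/\ell$ (legal since $\sum 1/\ell_i=1/\ell$) to split $\prod_i\|f_i\chi_{k-\gamma}\|^\ell$ into a product of $\ell_i$-norms. A shift $k\mapsto k-\gamma$ in each inner sum, followed by the hypothesis~(\ref{lambdai*}) $\lambda^\ast=\sum\lambda_i$ and the definition of the Morrey--Herz norm, produces a factor $p^{\gamma(\beta-\lambda^\ast)}p^{k_0\lambda^\ast}\prod_i\|f_i\|_{MK^{\beta_i,\lambda_i}_{\ell_i,q_i,\omega_i}}$. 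Multiplying by $p^{-k_0\lambda^\ast}$ eliminates the $k_0$-dependence, and taking the supremum in $k_0$ leaves the desired bound, with the residual $\gamma$-sum packaging into precisely the constant $\mathcal{C}_8$.

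The main obstacle is the transition from the ball-based CMO estimate~(\ref{esCMO}) to the sphere-based norm that drives the Morrey--Herz space: one has to be careful that the inflation factor $p^{k(\alpha_i+n)/r_i}$ picked up from enlarging $S_k$ to $B_k$ is matched on the outside by the substitution $\beta\leadsto\beta^\ast$ dictated by~(\ref{beta*}), rather than spoiling the summability of~$\mathcal{C}_8$. Once this exponent accounting is carried out correctly, the remainder is a routine Minkowski/Hölder cascade parallel to Theorems~\ref{TheoremMorreyHerz} and~\ref{theoremMorreyCMO}, and no additional analytic ingredient is required.
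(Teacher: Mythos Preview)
Your proposal is correct and follows essentially the same route as the paper: a shell-by-shell application of the Minkowski/H\"older scheme~(\ref{HfMorCMO}), the CMO bound~(\ref{esCMO}) with $R=k$, the sphere computation~(\ref{S0Sr}) for the $f_i$ factors, and then the Morrey--Herz bookkeeping~(\ref{MorreyHerz3.5}). One small slip: in Section~4 the preamble identities read $\sum_i 1/q_i+\sum_i 1/r_i=1/q$ and $\sum_i \alpha_i/q_i+\sum_i \alpha_i/r_i=\alpha/q$, not $\sum_i 1/q_i=1/q$; this does not affect your argument since the exponent matching is carried by~(\ref{beta*}) exactly as you describe.
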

\begin{proof}
By estimating as (\ref{HfMorCMO1}) above, it yields that
\begin{align}
\|{\mathcal{H}}^p_{\Phi,\Omega,\vec b}(\vec f)\chi_k\|_{L^q_\omega(\mathbb Q^n_p)}&\lesssim \|\Omega\|_{L^{q'}(S_0)}\sum\limits_{\gamma\in\mathbb Z}\dfrac{\Phi(p^{\gamma})}{p^{\gamma}}\prod\limits_{i=1}^m \Big(2+p^{|\gamma|(\alpha_i+n)}\Big)p^{\frac{k(\alpha_i+n)}{r_i}}\|b_i\|_{ {\mathop {CMO}\limits^{.}}^{r_i}_{\omega_i}(\mathbb Q^n_p)}
\times\nonumber
\\
&\,\,\times\prod\limits_{i=1}^m p^{\frac{\gamma(n+\alpha_i)}{q_i}}\|f_i\chi_{k-\gamma}\|_{L^{q_i}_{\omega_i}(\mathbb Q^n_p)}.\nonumber
\end{align}
From this, by the definition of the Morrey-Herz $p$-adic space and the relation (\ref{beta*}), we obtain that
\begin{align}
\|{\mathcal{H}}^p_{\Phi,\Omega,\vec b}(\vec f)\|_{{MK}^{\beta^*,\lambda^*}_{\ell, q, \omega}(\mathbb Q^n_p)}&\lesssim \|\Omega\|_{L^{q'}(S_0)}\sum\limits_{\gamma\in\mathbb Z}\dfrac{\Phi(p^{\gamma})}{p^{\gamma(1-\sum\limits_{i=1}^m(\frac{n+\alpha_i}{q_i}))}}\prod\limits_{i=1}^m \Big(2+p^{|\gamma|(\alpha_i+n)}\Big)\times
\nonumber
\\
&\times\|b_i\|_{ {\mathop {CMO}\limits^{.}}^{r_i}_{\omega_i}(\mathbb Q^n_p)}\mathop{\rm sup}\limits_{k_0\in\mathbb Z}p^{-k_0\lambda^*}\Big(\sum\limits_{k=-\infty}^{k_0}\prod\limits_{i=1}^m p^{k\beta_i\ell}\|f_i\chi_{k-\gamma}\|^{\ell}_{L^{q_i}_{\omega_i}(\mathbb Q^n_p)}\Big)^{1/\ell}.\nonumber
\end{align}
Thus, by (\ref{MorreyHerz3.5}) above, we get 
$$
\|{\mathcal{H}}^p_{\Phi,\Omega,\vec b}(\vec f)\|_{{MK}^{\beta^*,\lambda^*}_{\ell, q,\omega}(\mathbb Q^n_p)}\lesssim \mathcal C_8.\|\Omega\|_{L^{q'}(S_0)}.\prod\limits_{i=1}^m\|b_i\|_{ {\mathop {CMO}\limits^{.}}^{r_i}_{\omega_i}(\mathbb Q^n_p)}.\prod\limits_{i=1}^m \|f_i\|_{{MK}^{\beta_i,\lambda_i}_{\ell_i, q_i,\omega_i}(\mathbb Q^n_p)},
$$ 
which finishes the proof of Theorem \ref{theoremMHerzCMO}.
\end{proof}
It is well known that the Herz $p$-adic space is a special case of Morrey-Herz $p$-adic space. From this and Theorem \ref{theoremMHerzCMO}, we also obtain the desired result as follows.
\begin{corollary}\label{CoroHerzCMO}
The assumptions of Theorem \ref{theoremMHerzCMO} are true with $\lambda_1=\cdots=\lambda_m =0$ and the following condition holds:
\begin{align}
\mathcal C_9=\sum\limits_{\gamma\in\mathbb Z}\dfrac{\Phi(p^{\gamma})}{p^{\gamma(1-\beta^*-\frac{(n+\alpha)}{q})}}\prod\limits_{i=1}^{m}\Big(2+p^{|\gamma|(\alpha_i+n)}\Big)<\infty.\nonumber
\end{align}
Then, we have ${\mathcal{H}}^p_{\Phi,\Omega,\vec b}$ is bounded from ${K}^{\beta_1,\ell_1}_{q_1,\omega_1}(\mathbb Q^n_p)\times\cdots \times {K}^{{\beta_m, \ell_m}}_{q_m,\omega_m}(\mathbb Q^n_p)$ to ${K}^{\beta^*,\ell}_{q, \omega}(\mathbb Q^n_p)$.
\end{corollary}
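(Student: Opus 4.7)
The plan is to derive this corollary as a direct specialization of Theorem \ref{theoremMHerzCMO}. The first step is to observe the well-known identification between Herz and Morrey-Herz spaces at parameter $\lambda=0$: unwinding the definition, one has
\[
\|f\|_{MK^{\beta,0}_{\ell,q,\omega}(\mathbb Q^n_p)} = \mathop{\rm sup}\limits_{k_0\in\mathbb Z}\Big(\sum\limits_{k=-\infty}^{k_0} p^{k\beta\ell}\|f\chi_k\|_{L^q_\omega(\mathbb Q^n_p)}^{\ell}\Big)^{1/\ell},
\]
and since the partial sums in $k_0$ are monotone nondecreasing, their supremum equals the full sum over $k\in\mathbb Z$, which is precisely $\|f\|_{K^{\beta,\ell}_{q,\omega}(\mathbb Q^n_p)}$. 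Hence $MK^{\beta,0}_{\ell,q,\omega}(\mathbb Q^n_p) = K^{\beta,\ell}_{q,\omega}(\mathbb Q^n_p)$ isometrically, and similarly for each $MK^{\beta_i,0}_{\ell_i,q_i,\omega_i}(\mathbb Q^n_p)$.

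Second, I would verify that the hypotheses of Theorem \ref{theoremMHerzCMO} are consistent with the choice $\lambda_1=\cdots=\lambda_m=0$. The constraint \eqref{lambdai*} gives $\lambda^*=\lambda_1+\cdots+\lambda_m=0$, which is admissible since $\lambda_i\geq 0$ is the only sign requirement there. The relation \eqref{beta*} fixing $\beta^*$ is imposed in the statement of the present corollary as part of inheriting Theorem \ref{theoremMHerzCMO}'s assumptions. Under $\lambda^*=0$, the constant $\mathcal C_8$ in Theorem \ref{theoremMHerzCMO} collapses to
\[
\sum\limits_{\gamma\in\mathbb Z}\dfrac{\Phi(p^{\gamma})}{p^{\gamma(1-\beta^*-(n+\alpha)/q)}}\prod\limits_{i=1}^{m}\Big(2+p^{|\gamma|(\alpha_i+n)}\Big),
\]
which is exactly $\mathcal C_9$. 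Thus the finiteness condition in the corollary is identical to that of the parent theorem at the chosen parameters.

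Third, I invoke Theorem \ref{theoremMHerzCMO} directly to obtain
\[
\|{\mathcal{H}}^p_{\Phi,\Omega,\vec b}(\vec f)\|_{MK^{\beta^*,0}_{\ell, q,\omega}(\mathbb Q^n_p)} \lesssim \mathcal C_9 \cdot \|\Omega\|_{L^{q'}(S_0)} \cdot \prod\limits_{i=1}^{m}\|b_i\|_{ {\mathop {CMO}\limits^{.}}^{r_i}_{\omega_i}(\mathbb Q^n_p)} \cdot \prod\limits_{i=1}^{m} \|f_i\|_{MK^{\beta_i,0}_{\ell_i, q_i,\omega_i}(\mathbb Q^n_p)},
\]
and then translate each Morrey-Herz norm at $\lambda=0$ into the corresponding Herz norm via the identification of the first step, yielding the desired boundedness of ${\mathcal{H}}^p_{\Phi,\Omega,\vec b}$ from the product of weighted Herz spaces into $K^{\beta^*,\ell}_{q,\omega}(\mathbb Q^n_p)$.

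There is essentially no obstacle here; the corollary is a bookkeeping exercise reducing one function-space boundedness to a previously established one via a parameter specialization. The only point that warrants any care is checking that $\lambda_i=0$ is permitted in the hypothesis $\lambda_i\geq 0$ of Theorem \ref{theoremMHerzCMO} (it is), and that the constants $\mathcal C_8$ and $\mathcal C_9$ coincide at $\lambda^*=0$, which is immediate by inspection.
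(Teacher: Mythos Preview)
Your proof is correct and follows exactly the paper's approach: the corollary is stated immediately after the observation that the Herz space is the Morrey--Herz space at $\lambda=0$, so the result follows from Theorem~\ref{theoremMHerzCMO} by specializing $\lambda_1=\cdots=\lambda_m=0$ (hence $\lambda^*=0$), whereupon $\mathcal C_8$ reduces to $\mathcal C_9$. There is nothing to add.
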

\bibliographystyle{amsplain}

\end{document}